\theoremstyle{plain}
\newtheorem{theorem}{Theorem}[section]
\newtheorem{proposition}[theorem]{Proposition}
\newtheorem{lemma}[theorem]{Lemma}
\newtheorem{corollary}[theorem]{Corollary}
\theoremstyle{remark}
\newtheorem{remark}[theorem]{Remark}
\numberwithin{equation}{section}
\newcommand\eps{\varepsilon}
\renewcommand\epsilon{\varepsilon}
\DeclareMathOperator{\Aut}{Aut}
\DeclareMathOperator{\diag}{diag}
\DeclareMathOperator{\disc}{disc}
\DeclareMathOperator{\GL}{GL}
\DeclareMathOperator{\USp}{USp}
\DeclareMathOperator{\Jac}{Jac}
\DeclareMathOperator{\Tr}{Tr}
\newcommand\Fbar{\overline{\F}}
\newcommand\F{\mathbb{F}}
\newcommand\Fq{\F_q}
\newcommand\PP{\mathbb{P}}
\newcommand\R{\mathbb{R}}
\newcommand\Z{\mathbb{Z}}
\newcommand\Cc{\mathcal{C}}
\newcommand\Hc{\mathcal{H}}
\newcommand\M{\mathcal{M}}
\newcommand\X{\mathcal{X}}
\newcommand\fraka{\mathfrak{a}}
\newcommand{\Fr}{\textup{Fr}}
\begin{document}

\title{Lower bounds on the maximal number of rational points on curves over finite fields}
\date{April 29, 2022}

\author[Bergstr\"om]{Jonas Bergstr\"om}
\address{%
Jonas Bergstr\"om, 
Matematiska institutionen, Stockholms Universitet, SE-106 91, Stockholm, Sweden
}
\email{jonasb@math.su.se}

\author[Howe]{Everett W. Howe}
\address{%
Everett W. Howe,
Independent mathematician,
San Diego, CA 92104 U.S.A.
}
\email{however@alumni.caltech.edu}
\urladdr{\href{https://ewhowe.com}{https://ewhowe.com}}

\author[Lorenzo]{Elisa Lorenzo García}
\address{%
	Elisa Lorenzo Garc\'ia,
  Universit\'e de Neuch\^atel, rue Emile-Argand 11, 2000, Neuch\^atel,
  Switzerland. 
}

\address{%
	Elisa Lorenzo Garc\'ia,
   Univ Rennes, CNRS, IRMAR - UMR 6625, F-35000
 Rennes, %
  France. %
}
\email{elisa.lorenzo@unine.ch, elisa.lorenzogarcia@univ-rennes1.fr}

\author[Ritzenthaler]{Christophe Ritzenthaler}
\address{%
	Christophe Ritzenthaler,
  Univ Rennes, CNRS, IRMAR - UMR 6625, F-35000
 Rennes, %
  France. %
  }
  
\address{%
	Christophe Ritzenthaler,
  Université Côte d'Azur, CNRS, LJAD UMR 7351,
  Nice,
  France
}
\email{christophe.ritzenthaler@univ-rennes1.fr}




\subjclass[2020]{11G20, 14H25, 14H30, 11R45}
\keywords{Katz--Sarnak theory; distribution; moments; explicit construction}


\begin{abstract}
For a given genus $g \geq 1$, we give lower bounds for the maximal number of rational points on a smooth projective absolutely irreducible curve of genus $g$ over $\F_q$. As a consequence of Katz--Sarnak theory, we first get for any given $g>0$, any $\epsilon>0$ and all $q$ large enough, the existence of a curve of genus $g$  over $\F_q$ with at least $1+q+ (2g-\epsilon) \sqrt{q}$ rational points. Then using sums of powers of traces of Frobenius of hyperelliptic curves, we get a lower bound of the form $1+q+1.71 \sqrt{q}$ valid for $g \geq 3$ and odd $q \geq 11$. Finally, explicit constructions of towers of curves improve this result, with a bound of the form $1+q+4 \sqrt{q} -32$ valid for all 
$g\ge 2$ and for all~$q$.
 \end{abstract}

\maketitle


\section{Introduction}

Researchers who study $N_q(g)$, the maximal number of rational points on curves\footnote{ 
   Throughout this paper, the word `curve' will always mean a
   projective, absolutely irreducible, smooth variety of dimension~$1$.}
of genus $g$ over a finite field $\F_q$, generally follow the lead of Serre's 1985 Harvard lectures \cite{serre-book} and focus on two cases: one in which $q$ is fixed and the genus goes to infinity, and one in which $g$ is fixed and $q$ varies. 
In the first case, a great number of results have been achieved that control the asymptotic behavior of the ratio $N_q(g)/g$; see \cite{beelen2022survey} and the references there. In the second case, while a closed formula is known for $N_q(0), N_q(1)$ \cite{deuring} and $N_q(2)$ \cite{serre-point}, the exact nature of the growth of $N_q(g)$ for fixed $q$ remains open in general.  One of the tantalizing challenges already proposed in \cite[Sec.~4.3]{serre-book} is to determine whether for every $g$, the value $N_q(g)$ remains at a bounded distance from the Hasse--Weil bound $1+q+2g\sqrt{q}$ for all~$q$, as is the case for $g=0, 1$ and $2$. It is hard even to get good heuristics for this question (see our attempt in remark~\ref{rem:heuristic}), and in a recent personal communication J-P. Serre raised a less ambitious question: Is it possible to give for each $g$ a positive constant $c$ such that for all sufficiently large $q$, we have $N_q(g)\ge 1+q +c \sqrt{q}$\,? In this paper we provide several methods that lead to a positive answer to the question, even when we limit our consideration to hyperelliptic curves.\\

Serre not only asked this question but also suggested that it might be answered through the consideration of the (weighted) sum $S_n(q,\Hc_g)$ of the $n$-th powers of the traces of Frobenius of genus-$g$ hyperelliptic curves over~$\F_q$, for even~$n$. This strategy is at the root of the computations we present in Section~\ref{sec:moments}, which is chronologically the first path we followed. Using the explicit formula from \cite{berg2} for $n=6$, one finds for instance that for $g \geq 3$ and $q \geq 25$, we have $N_q(g) \geq 1+q+1.55 \sqrt{q}$.\\

We then quickly realized that by using Katz--Sarnak theory, one can actually get an optimal result of this flavor, namely that for every  $\epsilon>0$ and every $g$, one can find a $q_0$ such that for $q>q_0$, one has $N_q(g) \geq 1+q+(2g-\epsilon) \sqrt{q}$ (see Corollary~\ref{cor:2g-e}). We were surprised to find no trace of this result in the existing literature, as it can be derived easily. Notice though that a related result appears already in \cite[rem.~1.1.2]{serre-book}: For every  curve $C/\F_q$ and $\epsilon>0$, we have $\lvert\#C(\F_{q^i}) - (q^i+1)\rvert \ge (2g-\epsilon) \sqrt{q^i}$ for infinitely many $i$.

One drawback of the method used in Section~\ref{sec:KS} in comparison with the one in Section~\ref{sec:moments} is of course that the value of $q_0$ is unknown. It is then tempting to push the method of Section~\ref{sec:moments} using $S_n(q,\Hc_g)$ further on, in particular since one shows in Theorem~\ref{theorem:limitH0} that the limit when $n$ goes to infinity of
\[
\fraka_{2n}(g)
\colonequals \left(\lim_{q \to \infty} 
             \frac{S_{2n}(q,\Hc_g)}{q^{2g-1+i/2}}\right)^{\frac{1}{2n}}
\]             
is $2g$. This  proves that the strategy using moments provides the same asymptotic bound as Katz--Sarnak. But the lack of an explicit formula for $S_{2n}(q,\Hc_g)$ when $n$ is large also prevents us from giving an explicit value for~$q_0$. Interestingly though, one can show that $\fraka_{2n}(g)$ can also be efficiently computed using a representation of $\USp_{2g}$;  see Theorem~\ref{theorem:H0}.\\

In Section~\ref{sec:explicit} we develop yet another approach, which surpasses our best lower bounds from Section~\ref{sec:moments}: We show that under relatively mild hypotheses, a hyperelliptic curve $C$ of genus~$g$ can be covered by hyperelliptic curves of genus $2g$ and $2g+1$ that have at least as many rational points as does~$C$. By starting with well-chosen hyperelliptic curves of genus~$2$ and~$3$ with at least $1+q + 4\sqrt{q} - 32$ points, we can recursively construct a hyperellipic curve of any desired genus that has at least this many rational points. This lower bound is obviously smaller than the one we obtain using the Katz--Sarnak approach, but it is better than the one we find using the first few values of $S_n(q,\Hc_g)$, and it applies to all~$q$. The method also suggests an algorithm for producing an explicit curve of any given genus that surpasses this lower bound; see Section~\ref{SS:construction}.\\

\subsection*{Acknowledgement}
The idea of using sums of powers of traces of Frobenius over the collection of all genus-$g$ hyperelliptic curves, which we use in Section~\ref{sec:moments}, was suggested to us in a personal communication by J-P.~Serre. We want to thank him for his generous input, which initiated this whole project. We also thank Markus Kirschmer for his help with the mass formulae in remark~\ref{rem:heuristic}.
Finally, we thank 
Jeff Achter,
Anna Cadoret,
Adrian Diaconu,
and
Rachel Pries
for their helpful comments on the first version of this paper.

\section{Lower bounds from Katz--Sarnak distribution}
\label{sec:KS}
For $g \geq 2$, let $\M_g$ denote the (coarse) moduli space of smooth projective genus-$g$ curves, and let $\Hc_g$  denote the subspace of  hyperelliptic curves. Note that  $\M_g(\F_q)$  and $\Hc_g(\F_q)$ consist of $\Fbar_q$-isomorphism classes of curves that have models over~$\F_q$. Let $\M'_g(\F_q)$ and $\Hc'_g(\F_q)$ denote the sets of $\F_q$-isomorphism classes of (the appropriate types of) curves over~$\F_q$. 

\begin{theorem}[Katz--Sarnak, {\cite[Th.~10.7.12 and~10.8.2]{katz-sarnak}}] \label{th:2gbound}  Fix $g\geq 2$. 
Let $m$ be the Haar measure on $\mathrm{USp}_{2g}$, the compact symplectic group. If $f$  is a continuous function on $\mathrm{USp}_{2g}$ that is constant on the conjugacy classes, then \[
\int_{m \in \mathrm{USp}_{2g}} f(m) \,dm 
= \int_{\theta \in  [0,\pi]^g} f \circ h(\theta) \,d\mu_g(\theta) 
= \frac{1}{\# \M_g(\F_q)} \cdot 
  \sum_{C \in \M'_g(\F_q)} \frac{f\circ h(\theta_C)}{\# \Aut_{\F_q}(C)}
  +O(q^{-1/2})\,,
\]
where $\theta_C=(\theta_1,\ldots,\theta_g) \in [0,\pi]^g$ are the Frobenius angles of the Jacobian of $C$, where $h$ is the function given by
\[
h(\theta_1,\ldots,\theta_g) 
= \diag(e^{ i\theta_1},\ldots,e^{ i \theta_g},
        e^{-i\theta_1},\ldots,e^{-i \theta_g})\,,
\]
and where 
$d\mu_g(\theta) = \delta_g(\theta) \,d\theta_1\cdots d\theta_g$ is the density measure with 
\[
\delta_g(\theta)
= \frac{1}{g!} \,
  \prod_{j<k}\bigl(2\cos(\theta_j)-2\cos(\theta_k)\bigr)^2 \,
  \prod_j\Bigl(\frac{2}{\pi}\sin^2(\theta_j)  \Bigr)\,.
\]
The same holds when one replaces $\M_g$ with $\Hc_g$. \end{theorem}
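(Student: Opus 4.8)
The plan is to deduce the statement from two classical facts: the Weyl integration formula on $\USp_{2g}$ for the first equality, and Deligne's equidistribution theorem applied to the relative first $\ell$-adic cohomology of the universal (hyperelliptic) curve for the second, approximate, equality. For the first equality, one recalls that every element of $\USp_{2g}$ is conjugate to some $h(\theta)$ with $\theta\in[0,\pi]^g$, since its eigenvalues occur in pairs $e^{\pm i\theta_j}$ on the unit circle; hence $h$ identifies $[0,\pi]^g$ with the space of conjugacy classes, and the Weyl integration formula for the root system $C_g$ asserts precisely that the push-forward of $m$ under this identification is $d\mu_g$, with density $\delta_g$ exactly as written (the constants $1/g!$ and $2/\pi$ being those that render $\mu_g$ a probability measure). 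As $f$ is a continuous class function one has $f=f\circ h$ on conjugacy classes, so the first equality follows; here I would simply invoke a standard reference on compact Lie groups.

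For the second equality I would install the usual sheaf-theoretic machinery. Fix an auxiliary prime $\ell\ne p=\operatorname{char}\F_q$ and pass to a finite étale Galois cover of $\M_g$ (resp.\ $\Hc_g$) that is a fine moduli scheme — for instance by adjoining symplectic level-$\ell_0$ structure for an auxiliary prime $\ell_0\ge 3$, $\ell_0\ne p$, which simultaneously rigidifies away automorphisms. Over this cover, the relative $H^1$ of the universal family, Tate-twisted to weight $0$ using a fixed square root of $q$ in $\overline{\Q}_\ell$, is a lisse sheaf $\mathcal F$ of rank $2g$ carrying a symplectic autoduality from the cup product, so that its geometric monodromy group lies in $\mathrm{Sp}_{2g}$. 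The essential input — and the step I expect to be the genuine obstacle — is that the geometric monodromy group is the \emph{full} group $\mathrm{Sp}_{2g}$: for $\M_g$ this rests on the classical fact that the mapping class group surjects onto $\mathrm{Sp}_{2g}(\Z)$ in its action on $H_1$ (Dehn twists generating the symplectic group), transported to positive characteristic in the standard way; for $\Hc_g$ it is more delicate and is exactly the content of the cited theorems of \cite{katz-sarnak}, which build on explicit computations of hyperelliptic monodromy. Granting big monodromy, Deligne's equidistribution theorem gives that the normalised Frobenius conjugacy classes $h(\theta_C)$ become equidistributed in $\USp_{2g}$ with respect to $m$ as $q\to\infty$.

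To extract the error term and descend from the level cover back to $\M_g$ itself, I would work with characters. By Peter--Weyl it suffices to take $f=\chi_\Lambda$ for $\Lambda$ an irreducible representation of $\USp_{2g}$, a general continuous $f$ then following by uniform approximation. When $\Lambda$ is trivial all three quantities equal $1$ exactly; for the middle one this is the twisting (mass) identity that the $\F_q$-forms $C$ within a fixed $\Fbar_q$-isomorphism class satisfy $\sum_C 1/\#\Aut_{\F_q}(C)=1$, so that $\sum_{C\in\M'_g(\F_q)}1/\#\Aut_{\F_q}(C)=\#\M_g(\F_q)$. When $\Lambda$ is nontrivial, the Lefschetz trace formula — on the level cover, or directly on the moduli stack via Behrend's formula — expresses $\sum_{C\in\M'_g(\F_q)}\chi_\Lambda(h(\theta_C))/\#\Aut_{\F_q}(C)$ as $\sum_i(-1)^i\Tr(\Fr_q\mid H^i_c((\M_g)_{\Fbar_q},\Lambda(\mathcal F)))$; big monodromy makes the geometric coinvariants of $\Lambda(\mathcal F)$ vanish, so the top term $H^{2d}_c$ is zero (with $d=\dim\M_g=3g-3$, and $d=2g-1$ in the hyperelliptic case), and Weil~II bounds each remaining term by $q^{(2d-1)/2}$ times a sum of Betti numbers that is bounded in terms of $g$ and $\dim\Lambda$. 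Dividing by $\#\M_g(\F_q)=q^d+O(q^{d-1/2})$ (as $\M_g$ is geometrically irreducible of dimension $d$) and comparing with $\int_{\USp_{2g}}\chi_\Lambda\,dm=0$ yields the claimed $O(q^{-1/2})$. Running the identical argument with $\Hc_g$ in place of $\M_g$ — only $d$ and the (harder, cited) big-monodromy input change — gives the final sentence.
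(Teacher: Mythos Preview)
The paper does not prove this theorem at all: it is stated as a citation of \cite[Th.~10.7.12 and~10.8.2]{katz-sarnak} and used as a black box throughout Section~\ref{sec:KS}. So there is no ``paper's own proof'' to compare your attempt against.

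That said, your sketch is a faithful outline of how Katz--Sarnak actually establish such results: the first equality is indeed just the Weyl integration formula for the type-$C_g$ root system, and the second is Deligne's equidistribution theorem fed the crucial input that the geometric monodromy of the universal (hyperelliptic) family is the full $\mathrm{Sp}_{2g}$. Your reduction via Peter--Weyl to irreducible characters, followed by Lefschetz trace and Weil~II to get the $O(q^{-1/2})$ error, is exactly the standard route. The one place to be a little careful is the passage between the fine-moduli level cover and the coarse space / stack: the weighting by $1/\#\Aut_{\F_q}(C)$ is what makes the descent clean, and you have correctly invoked the mass identity $\sum_{C'\in[C]}1/\#\Aut_{\F_q}(C')=1$ (this is the $s_0(C)=1$ of Proposition~\ref{prop:integer}) to match the normalisation. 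If you were to flesh this out, the only nontrivial ingredient you would genuinely need to cite rather than reprove is big monodromy for $\Hc_g$, which is precisely the content of the cited theorems in \cite{katz-sarnak}.
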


As a consequence of Theorem~\ref{th:2gbound}, we obtain the following result. For $\mathcal{M}_g$ this is~\cite[Cor.~4.3]{LachaudDis}; the proof for $\Hc_g$ follows the same argument presented in~\cite{LachaudDis}.

\begin{proposition}\label{prop:dist}
Fix $g\geq 2$. 
For a genus-$g$ curve we write $\#C(\F_q)=1+q+\tau(C)\sqrt{q}$. Then we have
\[
\frac{\# \{C\in \mathcal{M}'_g(\F_q),\,\tau(C)\leq x\}}{\# \mathcal{M}_g(\F_q)}=F(x)+O(q^{-1/2})\,,
\] 
where $F(x)=\int_{A_x} d\mu_g$  and $A_x=\{(\theta_1,...,\theta_g)\in[0,\pi]^g:\,\sum_{j}2\operatorname{cos}\theta_j\leq x\}$. The same holds when one replaces $\mathcal{M}_g$ with $\Hc_g$. \qed
\end{proposition}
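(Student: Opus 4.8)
The statement is a repackaging of Theorem~\ref{th:2gbound} applied to the indicator function of $A_x$, and I would follow the argument of~\cite{LachaudDis}. The key preliminary observation is that if $\theta_C=(\theta_1,\dots,\theta_g)$ are the Frobenius angles of $\Jac C$, then $\tau(C)=\sum_j(e^{i\theta_j}+e^{-i\theta_j})=\sum_j 2\cos\theta_j=\Tr h(\theta_C)$, so $\{C:\tau(C)\le x\}=\{C:\theta_C\in A_x\}$; and by the Weyl integration formula (the middle equality of Theorem~\ref{th:2gbound}), $F(x)=\int_{A_x}d\mu_g=m(\{m\in\USp_{2g}:\Tr m\le x\})$ is simply the distribution function of the trace of a Haar-random element of $\USp_{2g}$. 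The plan is then to feed $f=\mathbf{1}_{A_x}$ into Theorem~\ref{th:2gbound} and afterwards remove the automorphism weights.

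For the first step I would smooth the indicator. Since $\Tr$ is a non-constant real-analytic function on the compact connected group $\USp_{2g}$, the level set $\{\Tr m=x\}$ has Haar measure zero, so $F$ is continuous; in fact $F$ is Lipschitz, so a width-$\delta$ neighbourhood $\{\,|\Tr m-x|<\delta\,\}$ of the level set has measure $O(\delta)$. Hence for each $\delta>0$ one can choose continuous class functions $f_\delta^-\le\mathbf{1}_{A_x}\le f_\delta^+$ on $\USp_{2g}$ agreeing with $\mathbf{1}_{A_x}$ off that neighbourhood, so $\int(f_\delta^+-f_\delta^-)\,dm=O(\delta)$. Plugging $f_\delta^{\pm}$ into Theorem~\ref{th:2gbound} and sandwiching gives
\[
\frac{1}{\#\M_g(\F_q)}\sum_{C\in\M'_g(\F_q)}\frac{\mathbf{1}_{A_x}(\theta_C)}{\#\Aut_{\F_q}(C)}=F(x)+O(\delta)+O_\delta(q^{-1/2}),
\]
and one balances $\delta$ against $q$. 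I expect this balancing to be the crux of the proof: the error in Theorem~\ref{th:2gbound} is $O(q^{-1/2})$ only for a \emph{fixed} test function, with an implied constant that grows with the representation-theoretic complexity of the smoothing, so one must trade the quality $\delta$ of the approximation against that growth. Carrying this out with an effective form of the equidistribution, together with the regularity of $F$, is precisely what yields the uniform error $O(q^{-1/2})$, as in~\cite{LachaudDis}.

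The remaining step is to strip the automorphism weights: one compares $\sum_C\mathbf{1}_{A_x}(\theta_C)/\#\Aut_{\F_q}(C)$ with the plain count $\#\{C:\tau(C)\le x\}$, and $\#\M_g(\F_q)$ with $\#\M'_g(\F_q)$. The curves whose automorphism group is strictly larger than the generic one form a subvariety of $\M_g$ (resp.\ $\Hc_g$) of dimension below $\dim\M_g=3g-3$ (resp.\ $\dim\Hc_g=2g-1$); since $\#\Aut_{\F_q}(C)$ is bounded in terms of $g$, each such geometric isomorphism class carries $O(1)$ twists over $\F_q$, so those curves contribute $O(q^{\dim-1})$ to each sum above, against denominators of exact order $q^{\dim}$. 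For $\M_g$ with $g\ge3$ the generic automorphism group is trivial, so $\#\M'_g(\F_q)=\#\M_g(\F_q)+O(q^{3g-4})$ and the weighted and unweighted counts agree up to $O(q^{3g-4})$, which gives the claim with error $O(q^{-1})$, absorbed by the $O(q^{-1/2})$ from the smoothing step. For $\Hc_g$ (and for $\M_2$) the same goes through, with the single caveat that the hyperelliptic involution lies in every automorphism group: being central and $\F_q$-rational, it multiplies the numerator and, once one normalizes by the number of $\F_q$-isomorphism classes, also the denominator by the same factor, so it cancels. The main obstacle throughout is the smoothing and balancing of the previous paragraph; everything here is the (standard) negligibility of curves with extra automorphisms.
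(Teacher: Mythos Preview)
The paper gives no self-contained proof here; it simply cites \cite[Cor.~4.3]{LachaudDis} for $\M_g$ and asserts that the same argument handles~$\Hc_g$. Your outline --- smooth the indicator of $A_x$, apply Theorem~\ref{th:2gbound}, then strip the automorphism weights via the negligibility of curves with extra automorphisms --- is precisely a sketch of that cited argument, so you are taking the same approach and, like the paper, deferring the delicate balancing of the smoothing against the effective equidistribution rate to~\cite{LachaudDis}.
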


\begin{corollary}\label{cor:2g-e} Fix $g\geq 2$ and $\epsilon>0$. For all sufficiently large~$q$, there exist hyperelliptic genus-$g$ curves $C/\F_q$ and $C'/\F_q$ with $\#C(\F_q)\geq1+q+(2g-\epsilon)\sqrt{q}$ and $\#C'(\F_q)\leq 1+q-(2g-\epsilon)\sqrt{q}$. 
\end{corollary}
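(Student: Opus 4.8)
The plan is to deduce Corollary~\ref{cor:2g-e} directly from Proposition~\ref{prop:dist} applied to the hyperelliptic locus $\Hc_g$. The key point is that the limiting distribution function $F(x) = \int_{A_x} d\mu_g$ is the cumulative distribution of the random variable $\theta \mapsto \sum_j 2\cos\theta_j$ on $[0,\pi]^g$ with respect to the measure $\mu_g$, and the support of $\mu_g$ is all of $[0,\pi]^g$ (since the density $\delta_g(\theta)$ is strictly positive on the open cube and vanishes only on the boundary). Consequently $F$ assigns positive mass to every neighborhood of the extreme value $x = 2g$, which is attained only at $\theta = (0,\dots,0)$, and positive mass to every neighborhood of $x = -2g$, attained only at $\theta = (\pi,\dots,\pi)$.

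Concretely, first I would fix $\epsilon > 0$ and observe that, since $\sum_j 2\cos\theta_j > 2g - \epsilon$ on a nonempty open subcube of $[0,\pi]^g$ around the origin, we have $F(2g) - F(2g-\epsilon) = \mu_g\bigl(\{\theta : 2g-\epsilon < \sum_j 2\cos\theta_j \le 2g\}\bigr) =: \eta > 0$. Equivalently, $1 - F(2g-\epsilon) = \eta > 0$ because $F(2g) = 1$. Then Proposition~\ref{prop:dist} for $\Hc_g$ gives
\[
\frac{\#\{C \in \Hc'_g(\F_q) : \tau(C) \le 2g - \epsilon\}}{\#\Hc_g(\F_q)} = F(2g-\epsilon) + O(q^{-1/2}) = 1 - \eta + O(q^{-1/2})\,,
\]
so for $q$ large enough the left-hand side is at most $1 - \eta/2 < 1$, which forces the existence of at least one $C \in \Hc'_g(\F_q)$ with $\tau(C) > 2g - \epsilon$, i.e. $\#C(\F_q) \ge 1 + q + (2g-\epsilon)\sqrt{q}$. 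The statement for $C'$ is entirely symmetric: apply the same reasoning near $x = -2g$, using that $F$ is left-continuous and $F(-2g + \epsilon) > 0 = F(-2g)$, to conclude that some $C' \in \Hc'_g(\F_q)$ has $\tau(C') \le -(2g-\epsilon)$. (Alternatively one invokes the symmetry $C \mapsto$ quadratic twist, which negates $\tau$ and preserves the hyperelliptic locus.) One should replace $\epsilon$ by a slightly smaller $\epsilon' < \epsilon$ at the outset so that the non-strict inequality in the statement comes out cleanly from the strict inequality $\tau(C) > 2g - \epsilon'$.

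The only genuine content beyond bookkeeping is the claim that $F(2g) - F(2g - \epsilon) > 0$, i.e. that $\mu_g$ charges every left-neighborhood of its essential supremum. This follows because $\delta_g(\theta) > 0$ for all $\theta$ in the open cube $(0,\pi)^g$ — the factors $\sin^2\theta_j$ are positive there, and the Vandermonde-type factors $\bigl(2\cos\theta_j - 2\cos\theta_k\bigr)^2$ are positive off the diagonals, which have measure zero — so $\mu_g$ has full support and the continuous function $\sum_j 2\cos\theta_j$ takes values arbitrarily close to its maximum $2g$ on sets of positive $\mu_g$-measure. I expect this positivity-of-measure step to be the main (and essentially the only) obstacle, and it is a short one; the rest is a direct application of the asymptotic equidistribution statement already proved. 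It is worth noting explicitly that the implied constant in the $O(q^{-1/2})$ term depends on $g$ but not on $\epsilon$, so that "sufficiently large $q$" means $q \ge q_0(g,\epsilon)$, with no uniformity in $\epsilon$ claimed — consistent with the discussion in the introduction that $q_0$ is not made explicit by this method.
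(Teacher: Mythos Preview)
Your proof is correct and follows essentially the same approach as the paper's: apply Proposition~\ref{prop:dist} to $\Hc_g$ and use that $F(2g-\epsilon) < F(2g) = 1$, so that for $q$ large the proportion of curves with $\tau(C)\le 2g-\epsilon$ is strictly less than~$1$. Your version is in fact slightly cleaner --- the paper goes through the explicit cardinality $\#\Hc_g(\F_q)=q^{2g-1}$, which is unnecessary once one knows the ratio is below~$1$ --- and you also justify the strict inequality $F(2g-\epsilon)<1$ via the full support of $\mu_g$, which the paper simply asserts by saying $F$ is strictly increasing.
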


\begin{proof}
Applying Proposition~\ref{prop:dist} to $\Hc_g$, 
we find that for $q$ large enough there exists $c>0$ such that
\[
\frac{\# \{C\in \Hc'_g(\F_q),\,\tau(C)\leq 2g-\epsilon\}}{\# \Hc_g(\F_q)}-F(2g-\epsilon) \leq \frac{c}{q^{1/2}}\,.
\]
Since $F\colon\R\rightarrow[0,1]$  does not depend on $q$ and is a continuous, non-negative, strictly increasing function on $[-2g,2g]$ with $F(2g)=1$, we have also, for $q$ large enough,
\[
F(2g-\epsilon)\leq 1-\frac{c}{q^{1/2}}-\frac{1}{q^{2g-1}}\,.
\]
Hence,
\[
\frac{\# \{C\in \Hc'_g(\F_q),\,|\tau(C)|\leq 2g-\epsilon\}}{\# \Hc_g(\F_q)} \leq F(2g-\epsilon)+ \frac{c}{q^{1/2}} \leq 1-\frac{1}{q^{2g-1}}\,.
\]
Since $\#\Hc_g(\F_q)=q^{2g-1}$ \cite[Prop.~7.1, p.~87]{BG01}, multiplying the previous inequality by this cardinality, we see that there exists a hyperelliptic curve $C/\F_q$ with $\#C(\F_q)\geq1+q+(2g-\epsilon)\sqrt{q}$ points. Taking its quadratic twist $C'$ gives the other inequality.
\end{proof}

\begin{remark}
Actually, the theory of Katz--Sarnak (in particular \cite[9.6.10]{katz-sarnak}) allows us to prove a similar result for low-dimensional families of curves. For a given prime $\ell$, let $S$ be a connected normal scheme, separated and of finite type over $\Z[1/\ell]$. Let $X/S$ be a smooth scheme such that for each finite field $k$ of characteristic different from $\ell$ and each point $s \in S(k)$, $X_s/k$ is a curve of genus $g$. We assume that for each $X_s$ the geometric monodromy group satisfies \cite[9.3.7.2]{katz-sarnak}; in particular, it is conjugate in $\GL_{2g}$ to a constant group. Examples of such families include for instance the generic element of the one-parameter families $y^2=f(x) (x-t)$ of hyperelliptic curves \cite{hall} or cyclic triple covers of~$\PP^1$ \cite{pries-achter}.

Now we simply assume that the dimension of the image of $S \otimes \bar{k}$ in the moduli space is at least~$1$. Then there exists 
a function $g(q)$ going to infinity such that $\#S(\F_q) \geq g(q)$. Moreover, Katz--Sarnak implies that for $x \in [-2g,2g]$,  
\[
\frac{\# \{s\in S(\F_q),\,\lvert\tau(X_s)\rvert\leq x\}}{\# S(\F_q)} - F(x)\leq f(q)
\]
for a strictly increasing distribution function $F(x)$ and a decreasing function $f(q)$ going to $0$.

Given $\epsilon>0$, let $q$ be large enough so that $F(2g-\epsilon)\leq 1-f(q)-\frac{1}{g(q)}$. Then  
\[
\frac{\# \{s \in S(\F_q),\,\lvert\tau(X_s)\rvert\leq 2g-\epsilon\}}{\# S(\F_q)}
  \leq 1-\frac{1}{g(q)}
  \leq \frac{\# S(\F_q)-1}{\# S(\F_q)}\,,
\]
which shows there is at least one curve in the family $X/S$ with more than $1+q+(2g-\epsilon)\sqrt{q}$ points. More generally, this argument can be adapted to prove the existence of a curve with number of points in the interval $[1+q+(a-\epsilon)\sqrt{q},1+q+(a+\epsilon)\sqrt{q}]$ for any $a$ with $-2g \leq a \leq 2g$.
\end{remark}

\begin{remark} \label{rem:heuristic}
 The lower bound of Corollary~\ref{cor:2g-e}  is of course far from answering Serre's question on the existence of a curve with bounded defect.  Even the question of the existence of infinitely many $p$ for which there exist defect-$0$ curves of a fixed genus $g$ over $\F_p$ is open. The following is a naive attempt to make up our mind on a direction to take for this challenge. The Jacobians of such curves are isogenous to powers of an ordinary elliptic curve $E/\F_p$ with trace $-\lfloor 2 \sqrt{p} \rfloor $. It is tempting to look at the number of principally polarized abelian varieties of this type up to isomorphism. Using the equivalence of categories \cite[Cor.~3.6]{KNRR}, this is the same as counting, up to isometry, unimodular positive definite hermitian $R$-lattices of rank $g$ where $R=\Z[x]/(x^2+\lfloor 2 \sqrt{p} \rfloor x + p)$. When $R$ is maximal, crude estimations of the mass formulae and class numbers of \cite{siegel, siegel3}  kindly provided by \cite{kirschmer-perso}
show that their number should be smaller than $C(g) \times \disc(R)^{g(g+1)/4 + o(1)}$, where $C(g)$ is a constant that does not depend on $R$. Since $\disc(R)$ is smaller than $4 \sqrt{p}$, we get at most $p^{g(g+1)/8 + o(1)}$ distinct principally polarized abelian varieties isogenous to the power of an elliptic curve with trace $-\lfloor 2 \sqrt{p} \rfloor $. If one makes the assumption that Jacobians over $\F_p$ are well distributed among principally polarized abelian varieties over $\F_p$, the `chance' to fall in the Jacobian locus  may be estimated as $p^{(3g-3)-g(g+1)/2}$. Hence  for $g>6$, it is heuristically unlikely to find a defect-$0$ curve over $\F_p$ when $p$ is large. We believe that the assumption that $R$ be maximal is not necessary, but proving this would require a better understanding of the mass formulae for non-projective $R$-lattices.
\end{remark}


\section{Lower bound from explicit power of traces} \label{sec:moments}
\subsection{Weighted trace powers}
Let us define the correct moments we want to compute. If $C/\F_q$ is a genus-$g$ curve, we denote by $[C]$ the set of representatives of its twists and define 
\begin{equation} \label{eq:ni}
s_n(C) = \sum_{C' \in [C]} \frac{(q+1-\#C'(\F_q))^n}{\# \Aut_{\F_q}(C')}\,.
\end{equation}

The following result is probably well known, but we provide a proof for lack of proper reference (note that the case  $s_0(C)=1$ can be found in \cite[Prop.5.1]{VdG92}).
\begin{proposition} \label{prop:integer}
For every curve $C/\F_q$ and every $n \geq 0$, $s_n(C)$ is an integer.
\end{proposition}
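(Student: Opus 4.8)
The plan is to reinterpret the weighted sum $s_n(C)$ as a count of points on a twisted moduli-type object, or — more concretely — to use the standard "twisting" bijection that relates twists of $C$ to Galois cohomology. Recall that the set $[C]$ of twists of $C$ is in bijection with $H^1(\mathrm{Gal}(\Fbar_q/\F_q), \Aut_{\Fbar_q}(C))$, and since $\mathrm{Gal}(\Fbar_q/\F_q) = \widehat{\Z}$ is procyclic (topologically generated by Frobenius $\Fr$), this $H^1$ is identified with the set of conjugacy classes of $\Aut_{\Fbar_q}(C)$ — here one must be slightly careful because the Galois action on $\Aut_{\Fbar_q}(C)$ may be nontrivial, so the correct statement is that twists correspond to $\Fr$-conjugacy classes. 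For a twist $C'$ corresponding to a class with representative $\sigma \in \Aut_{\Fbar_q}(C)$, one has $\#\Aut_{\F_q}(C') = \#\{\,\varphi \in \Aut_{\Fbar_q}(C) : \varphi^{\Fr}\sigma = \sigma\varphi\,\}$, the centralizer for the twisted conjugation action, and the trace of Frobenius on $C'$ equals the trace of $\sigma\cdot\Fr$ acting on $H^1_{\text{ét}}$ (equivalently on the Tate module) of $C$.

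With this dictionary in hand, the first step is to rewrite
\[
s_n(C) = \sum_{[\sigma]} \frac{\bigl(\Tr(\sigma\,\Fr \mid V)\bigr)^n}{\#Z(\sigma)}\,,
\]
where $V$ is the relevant $2g$-dimensional cohomology space, the sum runs over twisted-conjugacy classes $[\sigma]$ in $G \colonequals \Aut_{\Fbar_q}(C)$, and $Z(\sigma)$ is the twisted centralizer. The second step is the orbit–stabilizer identity: summing $1/\#Z(\sigma)$ over twisted-conjugacy classes and "unfolding" turns this into $\frac{1}{\#G}\sum_{\sigma \in G} \bigl(\Tr(\sigma\,\Fr\mid V)\bigr)^n$, because each twisted-conjugacy orbit has size $\#G/\#Z(\sigma)$ and the trace is constant on twisted-conjugacy classes (conjugating $\sigma\Fr$ by an element of $G$ doesn't change its trace). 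So
\[
s_n(C) = \frac{1}{\#G}\sum_{\sigma \in G}\bigl(\Tr(\sigma\,\Fr \mid V)\bigr)^n\,.
\]
The third and final step is to recognize the right-hand side as the multiplicity of the trivial representation — equivalently the dimension of the invariants — in $V^{\otimes n}$ as a representation of the finite group $\langle G, \Fr\rangle$ (or more precisely, to apply the projection formula to the endomorphism $\Fr$ on the $G$-invariant subspace-valued functor). Concretely, $\sigma \mapsto \Tr(\sigma\Fr \mid V^{\otimes n})$ is a (virtual) character value, and averaging a character over a coset $G\cdot\Fr$ inside the finite group $G \rtimes \langle \Fr \rangle$ (noting $\Fr$ has finite order on $\Aut$ and on $V^{\otimes n}$ up to the relevant quotient) yields $\dim (V^{\otimes n})^{G}$ restricted appropriately, hence a non-negative integer. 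One should phrase this using the finite group $\Gamma$ generated by the image of $G$ and $\Fr$ in $\GL(V)$; then $\frac{1}{\#G}\sum_{\sigma\in G}\Tr(\sigma\Fr\mid V^{\otimes n})$ is $\frac{\#\Gamma}{\#G}$ times the average over the coset, which by orthogonality equals an integer combination of multiplicities of irreducible characters — and a small argument shows it lands in $\Z_{\geq 0}$, or at least in $\Z$, which is all that is claimed.

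The main obstacle I expect is getting the twisting dictionary exactly right when the Galois action on $\Aut_{\Fbar_q}(C)$ is nontrivial: one must use $\Fr$-twisted conjugacy classes and $\Fr$-twisted centralizers throughout, and verify that $\#\Aut_{\F_q}(C') = \#Z_{\Fr}(\sigma)$ and that the trace of Frobenius on the twist $C'$ is genuinely $\Tr(\sigma \Fr \mid V)$ with the correct normalization (so that $q+1-\#C'(\F_q) = \Tr(\sigma\Fr\mid H^1)$). A clean way to sidestep the cohomological bookkeeping is to work inside the single finite group $\Gamma = \langle \rho(G), \rho(\Fr)\rangle \subseteq \GL_{2g}(\overline{\Q}_\ell)$, where $\rho$ is the representation on $H^1$: there the coset $\rho(G)\rho(\Fr)$ consists precisely of the "Frobenius-on-a-twist" elements, and $s_n(C) = \frac{1}{\#G}\sum_{\gamma \in \rho(G)\rho(\Fr)} \Tr(\gamma)^n$, which is manifestly a non-negative rational number equal to a sum of character multiplicities; its integrality then follows from the fact that $\#G/\#\rho(G)$ divides things appropriately, or more simply because it counts $\dim$ of an explicit vector space. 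I would present the argument in this representation-theoretic language, invoking the standard twist-classification (e.g. as in Serre's \emph{Galois Cohomology}) only to justify the reindexing of the sum over $[C]$.
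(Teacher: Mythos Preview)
Your first two steps match the paper exactly: using the bijection between twists and $\Fr$-conjugacy classes in $G=\Aut_{\Fbar_q}(C)$, together with the orbit--stabilizer relation $\#\Aut_{\F_q}(C')\cdot\#[g]_{\Fr}=\#G$, to rewrite
\[
s_n(C)=\frac{1}{\#G}\sum_{\sigma\in G}\bigl(\Tr(\sigma\,\pi_C\mid V)\bigr)^n
      =\Tr\Bigl(\pi_C^{\otimes n}\cdot\frac{1}{\#G}\sum_{\sigma\in G}\sigma^{\otimes n}\Bigr).
\]
This is precisely the chain of equalities the paper writes down.

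The gap is in your third step. The group $\Gamma=\langle\rho(G),\rho(\Fr)\rangle\subset\GL(V)$ is \emph{not} finite: the eigenvalues of Frobenius on $V=H^1_{\text{\'et}}$ are Weil numbers of complex absolute value $\sqrt{q}>1$, so $\rho(\Fr)$ has infinite order. Consequently there is no coset average to which character orthogonality applies, and $s_n(C)$ is not the dimension of any vector space --- it equals $\Tr\bigl(\pi_C^{\otimes n}\mid (V^{\otimes n})^G\bigr)$, which is a trace, not a dimension. Your parenthetical ``$\Fr$ has finite order on $V^{\otimes n}$ up to the relevant quotient'' does not rescue this: no quotient of $\GL(V^{\otimes n})$ both makes $\Fr$ finite-order and preserves the trace.

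The paper's fix is short and stays entirely within your setup. Write $P=\frac{1}{\#G}\sum_{\sigma}\sigma^{\otimes n}$; this is an idempotent, hence has eigenvalues in $\{0,1\}$. Because $\Fr$ normalizes $G$ (for each $g\in G$ there is $g'\in G$ with $g\pi_C=\pi_C g'$), the operator $\pi_C^{\otimes n}$ commutes with $P$. Thus the eigenvalues of $\pi_C^{\otimes n}P$ are products of eigenvalues of $\pi_C^{\otimes n}$ (algebraic integers) with $0$ or $1$, so $s_n(C)=\Tr(\pi_C^{\otimes n}P)$ is an algebraic integer. Since it is visibly rational from the defining sum, it lies in~$\Z$. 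You have all the ingredients for this argument; you just need to replace the finite-group character theory with this commuting-idempotent observation.
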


For this, we will need some elementary lemmas.  As in \cite[Prop. 9]{MR2678623}, let us define for $g\in \Aut_{\Fbar_q}(C)$, the set $[g]_{\Fr}$ of elements $h$ such that there exists $x\in \Aut_{\Fbar_q}(C)$ with $h=xg\,^{\Fr}\!x^{-1}$, where $^{\Fr}$ is the geometric $\F_q$-Frobenius morphism (acting here on $x^{-1}$). To a given set $[g]_{\Fr}$, one can associate a twist $C'$ of $C$.

\begin{lemma}[{\cite[Proof of Prop.~5.1]{VdG92}}] \label{lemma:prod}
Let $g\in \Aut_{\Fbar_q}(C)$ and let $C'$ be the twist associated to the Frobenius conjugacy class $[g]_{\Fr}$ of $g$. Then $\#\Aut_{{\F_q}}(C')\cdot \#[g]_{\Fr}=\#\Aut_{\Fbar_q}(C)$. \qed
\end{lemma}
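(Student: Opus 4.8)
The plan is to prove the orbit-stabilizer statement for the twisted conjugation action of $\Aut_{\Fbar_q}(C)$ on itself. Write $A = \Aut_{\Fbar_q}(C)$, which is a finite group equipped with the automorphism $\sigma = {}^{\Fr}(-)$ coming from the geometric Frobenius. Consider the action of $A$ on the set $A$ defined by $x \cdot g = x\, g\, \sigma(x)^{-1}$ (the $\sigma$-twisted conjugation action). First I would check this really is a group action: the identity acts trivially, and $x \cdot (y \cdot g) = x\, y\, g\, \sigma(y)^{-1}\sigma(x)^{-1} = (xy)\, g\, \sigma(xy)^{-1} = (xy) \cdot g$, using that $\sigma$ is a homomorphism. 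By definition, the orbit of $g$ under this action is exactly $[g]_{\Fr}$.

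Next I would identify the stabilizer of $g$ with $\Aut_{\F_q}(C')$, where $C'$ is the twist of $C$ attached to the cocycle class of $g$. Recall the standard description of twists: a twist $C'$ of $C$ is given by a $1$-cocycle of $\mathrm{Gal}(\Fbar_q/\F_q)$ in $A$, and since this Galois group is topologically generated by Frobenius, such a cocycle is determined by the single element $g$ that is the image of Frobenius (subject to the cocycle condition, which is automatic here because the group is procyclic). Under the twisting isomorphism $\varphi\colon C_{\Fbar_q} \xrightarrow{\sim} C'_{\Fbar_q}$, an element $x \in A$ descends to an $\F_q$-automorphism of $C'$ precisely when it commutes with the twisted Galois action, i.e.\ when $x = g\, \sigma(x)\, g^{-1}$, equivalently $x\, g\, \sigma(x)^{-1} = g$; this is exactly the stabilizer of $g$ for the action above. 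Thus $\mathrm{Stab}_A(g) \cong \Aut_{\F_q}(C')$.

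Now the orbit-stabilizer theorem gives $\#[g]_{\Fr} \cdot \#\mathrm{Stab}_A(g) = \#A$, which is the claimed identity $\#\Aut_{\F_q}(C') \cdot \#[g]_{\Fr} = \#\Aut_{\Fbar_q}(C)$. I would remark that this is precisely the computation carried out in the proof of \cite[Prop.~5.1]{VdG92}; one can either cite it or reproduce the short argument. The main point requiring care — the part I would expect to be the only real obstacle — is the bookkeeping in the second step: making the identification of the stabilizer with $\Aut_{\F_q}(C')$ fully precise requires being careful about left versus right actions and about whether one twists by $g$ or $g^{-1}$, so that the cocycle condition $x = g\,\sigma(x)\,g^{-1}$ comes out exactly matching the stabilizer condition $x \cdot g = g$ rather than its inverse. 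Everything else is formal group theory. Since the statement is attributed to the proof of \cite[Prop.~5.1]{VdG92} and is standard, a brief proof along these lines, or a direct reference, suffices.
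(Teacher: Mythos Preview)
Your proposal is correct. The paper itself gives no proof of this lemma: it simply attributes the statement to the proof of \cite[Prop.~5.1]{VdG92} and places a \qed, so your orbit--stabilizer argument for the $\sigma$-twisted conjugation action is exactly the standard reconstruction of that reference, and your caveat about left/right conventions is the only place where care is needed.
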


\begin{lemma}  \label{lemma:G}
Let $K$ be a field of characteristic $0$, let $n$ and $k$ be positive integers and let $G$ be a finite subgroups of $\GL_k(K)$. 
\begin{enumerate}
\item For every $A,B\in M_n(K)$ we have $(A\cdot B)^{\otimes k}=A^{\otimes k}\cdot B^{\otimes k}$. 
\item Hence, the map $g \mapsto g^{\otimes n}$ from $G$ to  $G^{\otimes n} \subseteq \GL_{kn}(K)$ induces a surjective morphism on its image $G_n$.  
\item The matrix $P_{G_n}=\frac{1}{\#G}\sum_{g\in G}g^{\otimes n}= \frac{1}{\#G_n}\sum_{g\in G_n} g$ is a projection. Hence its eigenvalues are $0$ and $1$. 
\qed
\end{enumerate}
\end{lemma}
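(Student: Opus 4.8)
The plan is to treat the three assertions in sequence, each relying on the previous one, using only elementary multilinear algebra. For part~(i), I would realize $A^{\otimes k}$ as the matrix, in the standard basis of $(K^n)^{\otimes k}$, of the endomorphism sending a simple tensor $w_1\otimes\cdots\otimes w_k$ to $Aw_1\otimes\cdots\otimes Aw_k$, and likewise for $B$ and for $AB$. Composing the first two maps and evaluating on a simple tensor gives $A^{\otimes k}\bigl(B^{\otimes k}(w_1\otimes\cdots\otimes w_k)\bigr) = (AB)w_1\otimes\cdots\otimes (AB)w_k$; since simple tensors span the space, the two matrices coincide. Equivalently one may invoke the Kronecker-product identity $(A_1\otimes A_2)(B_1\otimes B_2) = (A_1B_1)\otimes(A_2B_2)$ and induct on $k$. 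This step is purely formal.

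For part~(ii), the point is that~(i), read with the roles of $n$ and $k$ exchanged, says exactly that $g\mapsto g^{\otimes n}$ is multiplicative on $M_k(K)$. Combined with $(I_k)^{\otimes n} = I_{k^n}$, this shows that $(g^{-1})^{\otimes n}$ is a two-sided inverse of $g^{\otimes n}$, so the latter lies in $\GL_{k^n}(K)$ and the map $\phi\colon G\to\GL_{k^n}(K)$, $g\mapsto g^{\otimes n}$, is a homomorphism of groups. Its image $G_n\colonequals\phi(G)$ is then a subgroup, and $\phi$ is by construction surjective onto $G_n$.

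For part~(iii), I would first check that the two displayed expressions for $P_{G_n}$ agree: since $\phi\colon G\to G_n$ is a surjective group homomorphism, all of its fibres have the same cardinality $d=\#\ker\phi$ and $\#G = d\cdot\#G_n$, whence $\sum_{g\in G}g^{\otimes n} = d\sum_{h\in G_n}h$; dividing by $\#G$ yields the equality, and this is the one place where the characteristic hypothesis on $K$ (or merely $\#G$ being invertible in $K$) is used. Then the standard averaging argument finishes it: for each fixed $h_0\in G_n$, left translation by $h_0$ permutes $G_n$, so $h_0 P_{G_n} = P_{G_n}$; averaging this identity over $h_0\in G_n$ gives $P_{G_n}^2 = P_{G_n}$. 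An idempotent matrix has minimal polynomial dividing $X(X-1)$, so its eigenvalues lie in $\{0,1\}$.

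The argument involves no serious obstacle; the only step calling for a little care is the bookkeeping in~(iii), where one must keep the two group structures ($G$ and its quotient $G_n$) apart and verify that the fibre-counting genuinely identifies the average over $G$ with the average over $G_n$. Everything else is routine, and the characteristic hypothesis enters only through the need to divide by group orders.
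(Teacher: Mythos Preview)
Your proposal is correct. The paper itself gives no proof of this lemma: it simply places a \qed\ symbol after the statement, treating all three parts as standard facts. Your argument is precisely the elementary multilinear-algebra verification the reader is expected to supply, including the fibre-counting check that the two averages in~(iii) coincide and the usual translation-invariance argument for idempotency. (You even silently correct the paper's typo $\GL_{kn}$ to $\GL_{k^n}$.)
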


By abuse of notation we denote  by $g\in\Aut_{\Fbar_q}(C)$ the corresponding element in $\Aut_{\Fbar_q}(T_{\ell} \Jac C)$ that we see as a matrix of size $2g\times 2g$ with coefficients in $\Z_{\ell}$ for a prime $\ell\neq p$. We denote $\pi_C$ the Frobenius endomorphism of $\Jac C$ or its matrix for the action on $T_{\ell} \Jac C$ in some arbitrary basis. Let us recall from \cite[Prop. 11]{MR2678623} that  if $C'$ is a twist of $C$ given by an element $g\in \Aut_{\Fbar_q}(C)$  then $\pi_{C'}=\pi_{C} \cdot g$. 

\begin{proof}[Proof of Proposition~\textup{\ref{prop:integer}}] One has
\begin{align*}
s_n(C) &=  \sum_{C' \in [C]} \frac{(q+1-\#C'(\F_q))^n}{\# \Aut_{\F_q}(C')}
=\sum_{g \in \Aut_{\Fbar_q}(C)} \frac{(q+1-\#C'(\F_q))^n}{\# \Aut_{\Fbar_q}(C)}
=\sum_{g \in \Aut_{\Fbar_q}(C)} \frac{\Tr(\pi_{C'})^n}{\# \Aut_{\Fbar_q}(C)} \\
&= \sum_{g \in \Aut_{\Fbar_q}(C)} \frac{\Tr((\pi_{C'})^{\otimes n})}{\# \Aut_{\Fbar_q}(C)}=\sum_{g \in \Aut_{\Fbar_q}(C)} \frac{\Tr((\pi_C \cdot g)^{\otimes n})}{\# \Aut_{\Fbar_q}(C)}
=\sum_{g \in \Aut_{\Fbar_q}(C)} \frac{\Tr(\pi_C^{\otimes n}\cdot g^{\otimes n})}{\# \Aut_{\Fbar_q}(C)} \\
&= \Tr\Biggl(\pi_C^{\otimes n} \cdot \frac{1}{\# \Aut_{\Fbar_q}(C)} \sum_{g \in \Aut_{\Fbar_q}(C)}  g^{\otimes n}\Biggr) 
=\Tr(\pi_C^{\otimes n} \cdot P_{G_n})\,.
\end{align*}

The first equality is by definition, the second one by Lemma \ref{lemma:prod}, the third one by the Weil Conjectures, the fourth one is a classical property of the tensor product of matrices (see for instance \cite[Chap.~2 Prop.~2]{serre-linear}), the fifth one is \cite[Prop. 11]{MR2678623}, the sixth one is Lemma \ref{lemma:G}(i), the seventh is the commutativity of the sum and the trace of matrices and the eighth is Lemma \ref{lemma:G}(ii). 

Now, since for every $g \in \Aut_{\Fbar_q}(C)$ we have $g \circ \Fr = \Fr \circ g'$ for some $g'  \in \Aut_{\Fbar_q}(C)$, we see that $\pi_C^n$ commutes with $P_{G^{\otimes n}}$,  hence the eigenvalues of their product are the product of the eigenvalues. It is well-known that the eigenvalues of $\pi_C$, and therefore of $\pi_C^{\otimes n}$, are algebraic integers, and the ones of $P_{G_n}$ are $0$ or $1$. We conclude that the trace is an algebraic integer over $\Z_{\ell}$ as well.  Now the conclusion follows since we know that $s_n(C)$ is also a rational number, so it must be an integer.
\end{proof}

For $\X=\M_g$ or $\Hc_g$
we denote by $S_{n}(q,\X)$ the sum of the $s_n(C)$ when $C$ runs over the set of $\Fbar_q$-isomorphism classes of curves $C$ in $\X$ over $\F_q$.
Then, for two polynomials $f$ and $g$, let $[f/g]$ denote the polynomial quotient in the Euclidian division of $f$ by $g$. 
In Theorem~\ref{theorem:Si} and Remark~\ref{remark:S8}, the polynomial quotients correspond to the stable part of the cohomology, see \cite[Section 1.5]{petersenetal} together with \cite[Section 13]{berg2}.

\begin{theorem} \label{theorem:Si}  For every $g \geq 2$ and prime power $q$ we have 
\begin{align*}
S_2(q,\Hc_g)&= \, [q^{2g}]-1 \\
S_4(q,\Hc_g)&= \left [\frac{q^{2g} (3q^2+q+1)}{q+1} \right]-\frac{1}{2}(q-1)(q-2)(q+1)g^2+\frac{1}{2}(-q^3+2q^2-7q+2)g-3q+2 \\
S_6(q,\Hc_g)&=  \left [\frac{q^{2g}(15q^4+16q^2+2q+1)}{(q+1)^2}\right]-\frac{1}{24}(q-1)(q-3)(q+1)(q^3-6q^2+4q+13)g^4 \\ 
&\qquad -\frac{1}{12}(q+1)(q-3)^2(q^3-4q^2+18q-3)g^3\\
&\qquad+\frac{1}{24}(q^6-9q^5-99q^4+382q^3-469q^2+491q-9)g^2 \\
&\qquad+\frac{1}{12}(q^6-9q^5-19q^4+78q^3-423q^2+567q-723)g-15q^2+30q-61\\
&\qquad-\delta_{q} \, \frac{5}{8}g(g-1)(g-2)\bigl((g-3)(q-1)-4\bigr)\,,
\end{align*}
where $\delta_q$ is equal to $1$ if $q \equiv 0 \bmod 2$ and $0$ otherwise.
\end{theorem}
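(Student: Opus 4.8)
The plan is to compute each $S_n(q,\Hc_g)$ from cohomological data. By the definition of $s_n(C)$ in \eqref{eq:ni} and the fact that weighted point counts of moduli stacks are governed by the Lefschetz trace formula, the sum $S_n(q,\Hc_g)$ equals the weighted number of pairs consisting of a genus-$g$ hyperelliptic curve over $\F_q$ together with an $n$-tuple of its points (with signs and symmetrization coming from expanding $(q+1-\#C'(\F_q))^n$). Concretely, writing $\#C(\F_q)=q+1-\Tr(\pi_C)$ and using $\Tr(\pi_C)^n=\Tr(\pi_C^{\otimes n})$ as in the proof of Proposition~\ref{prop:integer}, one sees that $S_n(q,\Hc_g)$ is a $\Z$-linear combination (with coefficients that are polynomials in $q$) of traces of Frobenius on the $\ell$-adic cohomology of the moduli spaces $\Hc_{g,m}$ of hyperelliptic curves with $m\le n$ marked points. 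The first step, therefore, is to reduce the computation of $S_n$ for $n\in\{2,4,6\}$ to the cohomology of $\Hc_{g,m}$ for $m\le 6$, keeping careful track of the combinatorial coefficients arising from the multinomial expansion and from the symmetric-group action permuting the marked points.

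The second step is to invoke the explicit description of the cohomology of $\Hc_{g,m}$. For hyperelliptic curves, $\Hc_g$ is (up to finite group quotients) the moduli space $\M_{0,2g+2}/S_{2g+2}$, and pointed versions can be analyzed using the representation theory of the relevant symmetric and hyperoctahedral groups acting on configuration spaces; this is exactly the content of \cite{berg2} (see also \cite{petersenetal}), where the equivariant cohomology of these spaces — and hence the relevant Frobenius traces — is worked out. I would quote from \cite{berg2} the closed formulas for the $S_n(q,\Hc_g)$ with $n\le 6$, or rather the building blocks from which they assemble, and then specialize. The stable part of the cohomology (in the sense of \cite[Section 1.5]{petersenetal} together with \cite[Section 13]{berg2}) is precisely what produces the polynomial-quotient terms $[q^{2g}]$, $[q^{2g}(3q^2+q+1)/(q+1)]$, and $[q^{2g}(15q^4+16q^2+2q+1)/(q+1)^2]$; the remaining summands — the polynomials in $g$ with $q$-dependent coefficients, and the parity term $\delta_q$ — come from the unstable (low-degree, $g$-dependent) part of the cohomology, and in particular the $\delta_q$ correction reflects the extra automorphism (the hyperelliptic involution together with its interaction with $2$-torsion) that is present only in characteristic $2$ or, more precisely, the contribution of a cohomology class that behaves differently according to the parity of $q$.

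The third step is bookkeeping: assemble the stable and unstable contributions, collect terms by powers of $g$, and verify that the resulting expressions are the ones stated. For $n=2$ this is essentially immediate since $S_2(q,\Hc_g)$ is governed by $H^0$ and $H^2$ of $\Hc_{g,2}$ alone (whence $[q^{2g}]-1$, the $-1$ being the defect between the count of marked-point configurations and the $\mathbb P^1\times\mathbb P^1$ diagonal contribution). For $n=4$ and especially $n=6$ the polynomials in $g$ of degree up to $4$ require knowing the cohomology in several degrees and the action of $S_4$ resp.\ $S_6$; this is where all the real input from \cite{berg2} is used.

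I expect the main obstacle to be the $n=6$ case: one must correctly extract from \cite{berg2} the full $S_6$-equivariant cohomology of the $6$-pointed hyperelliptic moduli space (including the classes that are unstable in $g$), correctly handle the alternating signs and multiplicities in the multinomial expansion of $(q+1-\#C(\F_q))^6$, and correctly identify the characteristic-$2$ discrepancy term $-\delta_q\cdot\frac{5}{8}g(g-1)(g-2)\bigl((g-3)(q-1)-4\bigr)$. Verifying that the $g$-polynomial coefficients come out with the precise integer-over-$24$ and integer-over-$12$ normalizations — and that the whole expression is consistent with $S_6(q,\Hc_g)$ being an integer for every $q$ and $g$ (as it must be, by summing Proposition~\ref{prop:integer}) — provides a strong internal check on the computation.
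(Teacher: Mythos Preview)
Your proposal is correct and follows the same approach as the paper: both amount to invoking the computations carried out in \cite{berg2}. The paper's proof is in fact far terser than yours --- it simply records that \cite[Sections~7 and~10]{berg2} explain how to compute $S_i(q,\Hc_g)$ for $i=2,4,6$, together with the notational identification $S_i(q)=a_{1^i}|_g$ --- whereas you supply an informal roadmap of what that reference actually does (Lefschetz trace, equivariant cohomology of pointed hyperelliptic moduli, separation into stable and unstable parts).
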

\begin{proof} In \cite[Section 7,10]{berg2} it is described how to compute $S_i(q,\Hc_g)$ for $i=2,4$ and $6$. Note that in the notation of \cite{berg2}, $S_i(q)$ equals $a_{1^i}|_g$. \end{proof}

\begin{remark} \label{remark:S8}
From \cite[Section 7,10]{berg2} we see that the information missing to compute $S_8(q,\Hc_g)$ for any $g \geq 3$, is $S_8(q,\M_{1,1})$, $S_8(q,\Hc_2)$ and 
\begin{equation} \label{eq:ram}
 \sum_{(C,p) \in \M'_{1,1}(\F_q)} \frac{(q+1-\#C'(\F_q))^6 (q+1-r_1(C))^2}{\# \Aut_{\F_q}(C)}\,, 
\end{equation}
in the notation of \cite[Section 12]{berg2}. Here, $q+1-r_1(C)$ is the number of ramification points over  $\F_q$ of $(C,p)$ as a double cover of $\mathbb P^1$. For every $q$, $S_8(q,\Hc_2)$ can be determined through the information in \cite[Theorem 2.1]{petersen}. A level two structure on an elliptic curve can be described in terms of a marking of its ramification points. For odd $q$, the cohomology of local systems on $\M_{1,1}[2] \otimes \Fbar_q$ (the moduli space of elliptic curves with a level two structure) and its structure as a representation of $\mathrm{SL}(2,\F_2) \cong \mathbb S_3$, is well known via the Eichler--Shimura isomorphism, see~\cite{deligne3} and \cite[Theorem 6]{faltings}. Using this we can also compute \eqref{eq:ram}. Putting these results together we find that for every $g \geq 2$ and odd $q$, we have 
\begin{flalign*}
S_8(q,\Hc_g) 
&=\rlap{$\displaystyle\left[\frac{q^{2g}(105q^6-105q^5+273q^4-83q^3+66q^2+3q+1)}{(q+1)^3}\right]$} \\
&\qquad -\frac{1}{720}
\rlap{$(q-1)(q-3)(q-4)(q-5)(q+1)^2(q^3-9q^2+15q+33) g^6$} \\ 
&\qquad -\frac{1}{240}
\rlap{$(q-3)(q-5)(q+1)(q^6-13q^5+92q^4-280q^3+215q^2+565q-100)g^5$} \\
&\qquad +\frac{1}{144}
\rlap{$(q-3)(q+1)(q^7-18q^6-11q^5+828q^4-3455q^3+5826q^2-4947q+48)g^4$} \\
&\qquad +\frac{1}{48}
\rlap{$(q-3)(q^8-17q^7+55q^6+61q^5-1329q^4+3573q^3-3219q^2+2095q+124)g^3$} \\
&\qquad +\frac{1}{360}
\rlap{$(-2q^9+40q^8+19q^7-2480q^6-3470q^5+52390q^4-166449q^3$} \\
&&+338580q^2-424098 q+453870)g^2& \\
&\qquad +\frac{1}{60}
\rlap{$(-q^9+20q^8-85q^7-120q^6-214q^5+2530q^4-22515q^3+62220q^2$}\\
&& -127725q+201870)g& \\
&\qquad -105q^3+420q^2-1218q+2582\,.
\end{flalign*}
\end{remark}

\begin{theorem} \label{theorem:CSi} For every $g\geq 2$, prime power $q$ and even $n \geq 2$, let $a_{q}=(S_n(q,\Hc_g)/q^{2g-1+n/2})^{1/n} $. There exists a hyperelliptic genus $g$ curve $C/\mathbb{F}_q$ with $\#C(\mathbb{F}_q)\geq1+q+ a_q \sqrt{q}$ and a hyperelliptic curve $C'/\mathbb{F}_q$ with $\#C'(\mathbb{F}_q)\leq 1+q-a_q \sqrt{q}$. 
\end{theorem}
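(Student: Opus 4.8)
The plan is to read $S_n(q,\Hc_g)/q^{2g-1+n/2}$ as a weighted average of $n$-th powers of normalized traces of Frobenius over all twists, and then extract an extremal curve by a mean-value argument, exactly as in the proof of Corollary~\ref{cor:2g-e} but with the explicit quantity $a_q$ in place of the Katz--Sarnak limit.

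First I would unfold the definition of $S_n(q,\Hc_g)$. As $C$ ranges over the $\Fbar_q$-isomorphism classes in $\Hc_g(\F_q)$, the twist-representative sets $[C]$ partition the set $\Hc'_g(\F_q)$ of $\F_q$-isomorphism classes of hyperelliptic genus-$g$ curves over $\F_q$, so
\[
S_n(q,\Hc_g) = \sum_{C'\in \Hc'_g(\F_q)} \frac{(q+1-\#C'(\F_q))^n}{\#\Aut_{\F_q}(C')}\,.
\]
Writing $\#C'(\F_q)=1+q+\tau(C')\sqrt q$ and using that $n$ is even, the numerator equals $\tau(C')^n q^{n/2}$, a non-negative real; in particular $S_n(q,\Hc_g)\geq 0$ and $a_q$ is a well-defined non-negative real. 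The total weight is the $n=0$ case: combining $s_0(C)=1$ with $\#\Hc_g(\F_q)=q^{2g-1}$ (from \cite{VdG92} and \cite{BG01}, as recalled in the proof of Corollary~\ref{cor:2g-e}) gives $\sum_{C'\in\Hc'_g(\F_q)} 1/\#\Aut_{\F_q}(C') = q^{2g-1}$. Hence
\[
a_q^n \;=\; \frac{S_n(q,\Hc_g)}{q^{2g-1+n/2}} \;=\; \frac{\sum_{C'} \tau(C')^n/\#\Aut_{\F_q}(C')}{\sum_{C'} 1/\#\Aut_{\F_q}(C')}
\]
is a weighted average, with strictly positive weights, of the non-negative numbers $\tau(C')^n$.

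Next, since a weighted average with positive weights cannot exceed the maximum of the quantities averaged, there exists $C'_0\in\Hc'_g(\F_q)$ with $\tau(C'_0)^n\geq a_q^n$, and since $n$ is even this reads $\lvert\tau(C'_0)\rvert\geq a_q$. If $\tau(C'_0)\geq a_q$ put $C:=C'_0$; otherwise $\tau(C'_0)\leq -a_q$, and we let $C$ be the quadratic twist of $C'_0$, which is again hyperelliptic of genus~$g$ and has $\tau(C)=-\tau(C'_0)\geq a_q$. In either case $\#C(\F_q)=1+q+\tau(C)\sqrt q\geq 1+q+a_q\sqrt q$, and taking $C'$ to be the quadratic twist of $C$ gives $\tau(C')=-\tau(C)\leq -a_q$, i.e.\ $\#C'(\F_q)\leq 1+q-a_q\sqrt q$.

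There is no genuine obstacle here beyond bookkeeping; the points that need care are all elementary: that $a_q\in\R_{\geq 0}$ (which rests on $n$ being even, so that every summand in $S_n$ is non-negative), that the sets $[C]$ really do partition $\Hc'_g(\F_q)$, that the $n=0$ normalization is $q^{2g-1}$, and that the quadratic twist of a hyperelliptic curve is hyperelliptic of the same genus with $\tau$ negated. I would present the argument in just a few lines, emphasizing the parallel with Corollary~\ref{cor:2g-e}.
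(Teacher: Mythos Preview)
Your proof is correct and follows essentially the same approach as the paper's: interpret $S_n(q,\Hc_g)/q^{2g-1+n/2}$ as a weighted average of non-negative quantities $(q+1-\#C')^n/q^{n/2}$, use pigeonhole to extract a curve with $\lvert\tau\rvert\geq a_q$, then take a quadratic twist for the other inequality. The only cosmetic difference is that the paper applies pigeonhole in two layers (first over the $q^{2g-1}$ geometric classes to find a $C_j$ with $s_n(C_j)\geq S_n(q)/q^{2g-1}$, then within $[C_j]$ using $s_0(C_j)=1$), whereas you flatten this into a single weighted average over $\Hc'_g(\F_q)$; the content is identical.
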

\begin{proof} By the above there are curves $C_1,\ldots,C_{q^{2g-1}}$ over $\F_q$ such that 
$S_n(q)=\sum_{i=1}^{q^{2g-1}} s_n(C_i)$. Since $n$ is even, all $s_n(C_i)$ are non-negative and so there must be a $j$ such that $s_{n}(C_j) \geq S_n(q)/q^{2g-1}$. Since $s_0(C_j)=\sum_{C\in [C_j]} 1/\# \Aut_{\F_q}(C)=1$, $s_n(C_j)$ can be seen as a weighted average, and it then follows that there is a $C \in [C_j]$ such that $(\#C(\F_q)-q-1)^n \geq S_n(q)/q^{2g-1}$. This shows that $\#C(\F_q) \leq q+1-a_q\sqrt{q}$ with $a_q=(S_n(q)/q^{2g-1+n/2})^{1/n} $. The quadratic twist of $C$ gives the curve with the opposite bound. 
\end{proof}

Using the formulas of Theorem~\ref{theorem:Si} and Remark~\ref{remark:S8} we get, for $i=2,4,6$ and $8$,  concrete lower bounds for $(S_n(q,\Hc_g)/q^{2g-1+n/2})^{1/n}$ valid for $q$ large enough.
\begin{corollary} \label{cor:CSi}  
There exists a hyperelliptic curve of genus $g$ over $\F_q$ with $\#C(\F_q) \geq 1+q+a \sqrt{q}$ with
\begin{itemize}
\item $a=(S_4(q)/q^{2g+1})^{1/4} \geq 1.3$  when $g \geq 3$ and $q \geq 13$\textup{;}
\item   $a=(S_6(q)/q^{2g+2})^{1/6} \geq 1.55$ when  $g \geq 3$ and $q \geq 25$\textup{;}
\item  $a=(S_8(q)/q^{2g+3})^{1/8} \geq 1.71$ when $g \geq 3$ and odd $q \geq 11$.
\end{itemize}  
\end{corollary}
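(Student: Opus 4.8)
\emph{Strategy.} The plan is to reduce the claim to a finite numerical check with the explicit formulas. By Theorem~\ref{theorem:CSi} it suffices, for each pair $(n,c_n)\in\{(4,1.3),(6,1.55),(8,1.71)\}$, to prove
\[
\frac{S_n(q,\Hc_g)}{q^{2g-1+n/2}}\ \ge\ c_n^{\,n}
\]
for every $g\ge3$ and every $q$ in the stated range; one then substitutes the formulas of Theorem~\ref{theorem:Si} (for $n=4,6$) and Remark~\ref{remark:S8} (for $n=8$) into the left-hand side and estimates.

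\emph{Separating the stable part.} Write $S_n(q,\Hc_g)=P_n(q)+Q_n(q,g)$, where $P_n(q)=\bigl[\,q^{2g}\tilde U_n(q)/(q+1)^{n/2-1}\,\bigr]$ is the stable part, with
\[
\tilde U_4=3q^2+q+1,\qquad\tilde U_6=15q^4+16q^2+2q+1,\qquad\tilde U_8=105q^6-105q^5+273q^4-83q^3+66q^2+3q+1,
\]
so that $\tilde U_n$ has degree $n-2$ and leading coefficient $(n-1)!!\in\{3,15,105\}$, and where $Q_n(q,g)$ collects the remaining terms --- a polynomial in $g$ of degree $n-2$ with $q$-polynomial coefficients of degree at most $3n/2-3$. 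Performing the Euclidean division gives $P_n(q)=\bigl(q^{2g}\tilde U_n(q)-R_{n,g}(q)\bigr)/(q+1)^{n/2-1}$ with $\deg_q R_{n,g}\le n/2-2$, so
\[
\frac{P_n(q)}{q^{2g-1+n/2}}\ =\ \frac{\tilde U_n(q)}{(q(q+1))^{n/2-1}}\ -\ \frac{R_{n,g}(q)}{(q+1)^{n/2-1}\,q^{2g-1+n/2}}\,.
\]
The first summand does not depend on $g$, is strictly increasing in $q$, and converges to $(n-1)!!$ from below; the second is $O(q^{-2g-n/2})$ and negligible for $g\ge3$.

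\emph{The remainder and the conclusion.} Since $\deg_q Q_n(\cdot,g)\le3n/2-3$, the term $Q_n(q,g)/q^{2g-1+n/2}$ tends to $0$ as $q\to\infty$ whenever $3n/2-3<2g-1+n/2$, i.e.\ for all $g\ge3$ if $n\in\{4,6\}$ and for all $g\ge4$ if $n=8$; for those $g$ (and, when $n=4$, already for $g=3$) a crude bound on the coefficients of $Q_n$ shows that this term is $O(q^{-2})$ and, beyond the threshold, stays below the positive gap $\tilde U_n(q)/(q(q+1))^{n/2-1}-c_n^{\,n}$. There remain the cases $(n,g)=(6,3)$, where $Q_6(q,3)/q^{8}\sim-5/q^{2}$, and $(n,g)=(8,3)$, where $3n/2-3=2g-1+n/2=9$ and the coefficient of $q^{9}$ in $Q_8(q,3)$ is
\[
-\frac{3^6}{720}-\frac{3^5}{240}+\frac{3^4}{144}+\frac{3^3}{48}-\frac{3^2}{180}-\frac{3}{60}=-1\,,
\]
so that $\lim_{q\to\infty}S_8(q,\Hc_3)/q^{9}=105-1=104$. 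In every case the limit of $S_n(q,\Hc_g)/q^{2g-1+n/2}$ is one of $3$, $15$, $105$, $104$, each strictly above $1.3^4$, $1.55^6$, $1.71^8$; it therefore only remains to expand the one-variable polynomials $S_6(q,\Hc_3)$ and $S_8(q,\Hc_3)$ and verify the inequality at $q=25$ and at odd $q=11$, the same estimates only improving for larger~$q$.

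\emph{Main obstacle.} The difficulty is entirely quantitative. The limiting values $3^{1/4}\approx1.316$, $15^{1/6}\approx1.570$, $105^{1/8}\approx1.789$ (and $104^{1/8}\approx1.787$) exceed the claimed $1.3$, $1.55$, $1.71$ by only a slim margin, so the $O(1/q)$ deviation of $\tilde U_n(q)/(q(q+1))^{n/2-1}$ from $(n-1)!!$ --- and, for $n=8$ at $g=3$, the whole degree-$9$ polynomial $Q_8(q,3)$ --- must be estimated with genuine numerical precision, not merely up to order of magnitude, both to certify that $q\ge13$, $q\ge25$ and odd $q\ge11$ are the correct thresholds and to see that the bound fails for smaller~$q$. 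Once these polynomials are expanded, this is a routine, if somewhat laborious, finite computation.
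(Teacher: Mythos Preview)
Your proposal is correct and follows the same route the paper implicitly takes: the paper gives no proof beyond the sentence preceding the corollary (``Using the formulas of Theorem~\ref{theorem:Si} and Remark~\ref{remark:S8} we get\ldots concrete lower bounds''), so the intended argument is precisely to substitute the explicit expressions for $S_n(q,\Hc_g)$ and check the numerical inequality. Your decomposition into the stable part $P_n$ and the remainder $Q_n$, together with the limiting values $3^{1/4}$, $15^{1/6}$, $105^{1/8}$ (and your correct computation of the $q^9$-coefficient $-1$ in $Q_8(q,3)$, giving the limit $104$), is exactly the right structural analysis, and in fact goes well beyond what the paper spells out.

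One small point to tighten: when you say the remainder $Q_n(q,g)/q^{2g-1+n/2}$ is $O(q^{-2})$ and ``stays below the positive gap,'' the implicit constant depends on $g$ (since $Q_n$ has degree $n-2$ in $g$). You should note that for fixed $q\ge 11$ the function $g\mapsto g^{n-2}/q^{2g}$ is decreasing for $g\ge 3$, so the worst case in $g$ is always $g=3$ (or $g=4$ for $n=8$, which you handle separately); this makes the uniformity in $g$ explicit. Similarly, ``the same estimates only improving for larger~$q$'' should be justified either by checking that $S_n(q,\Hc_g)/q^{2g-1+n/2}-c_n^n$, viewed as a rational function of $q$, is eventually increasing, or by bounding the sub-leading terms once and for all. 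Both are routine, but worth stating.
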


One sees that the coefficient of the leading term of $S_n(q)$ controls the growth of the bound on the number of points. This coefficient can be obtained quickly, even when a complete formula for $S_n(q)$ is out of reach, thanks to a relation with representation theory of the compact symplectic group $\mathrm{USp}_{2g}$. 

\begin{theorem} \label{theorem:H0}
For every $g \geq 2$ and even $n \geq 2$ let 
\[
\fraka_n(g)\colonequals
\lim_{q \to \infty} \frac{S_n(q,\X)}{q^{\dim \X+n/2}}
\]
with $\X=\M_g$ or $\Hc_g$. 
Then $\fraka_n(g)$ is equal to the number of times the trivial representation appears in the $\mathrm{USp}_{2g}$-representation $V^{\otimes i}$ with $V$ the standard representation.
\end{theorem}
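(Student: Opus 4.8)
The plan is to recognize the normalized quantity $S_n(q,\X)/q^{\dim\X+n/2}$ as a Katz--Sarnak average over $\USp_{2g}$ of the continuous class function $M\mapsto(\Tr M)^n$, and then to evaluate that average by Schur orthogonality.

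First I would unwind the definitions to express $S_n(q,\X)$ as a weighted sum over $\F_q$-isomorphism classes. By definition $S_n(q,\X)=\sum_{[C]}s_n(C)$, the sum over the $\Fbar_q$-isomorphism classes $[C]$ in $\X(\F_q)$, and each $s_n(C)$ is a sum over the twists $C'$ of $C$; since every class in $\X'(\F_q)$ is a twist of exactly one such $[C]$, these two sums combine into
\[
S_n(q,\X)=\sum_{C'\in\X'(\F_q)}\frac{(q+1-\#C'(\F_q))^n}{\#\Aut_{\F_q}(C')}.
\]
By the Lefschetz trace formula (the Weil conjectures), $q+1-\#C'(\F_q)=\Tr(\pi_{C'})$, which by the definition of the Frobenius angles $\theta_{C'}$ of $\Jac C'$ equals $\sqrt q\,\Tr h(\theta_{C'})$; hence
\[
S_n(q,\X)=q^{n/2}\sum_{C'\in\X'(\F_q)}\frac{\bigl(\Tr h(\theta_{C'})\bigr)^n}{\#\Aut_{\F_q}(C')}.
\]

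Next I would apply Theorem~\ref{th:2gbound} with the test function $f=(\Tr)^n$, which is a continuous class function on $\USp_{2g}$ (a polynomial in the matrix entries, invariant under conjugation). The identity $\Tr(M^{\otimes n})=(\Tr M)^n$, already invoked in the proof of Proposition~\ref{prop:integer}, identifies $(\Tr)^n$ with the character $\chi_{V^{\otimes n}}$ of the $n$-th tensor power of the standard representation $V$. Theorem~\ref{th:2gbound} then yields
\[
\frac{S_n(q,\X)}{q^{n/2}}=\#\X(\F_q)\left(\int_{\USp_{2g}}\chi_{V^{\otimes n}}\,dm+O(q^{-1/2})\right),
\]
and dividing by $q^{\dim\X}$ gives $S_n(q,\X)/q^{\dim\X+n/2}=(\#\X(\F_q)/q^{\dim\X})\bigl(\int_{\USp_{2g}}\chi_{V^{\otimes n}}\,dm+O(q^{-1/2})\bigr)$. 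Since $\#\Hc_g(\F_q)=q^{2g-1}=q^{\dim\Hc_g}$ exactly \cite[Prop.~7.1]{BG01} and $\#\M_g(\F_q)=q^{3g-3}+o(q^{3g-3})=q^{\dim\M_g}(1+o(1))$, the prefactor tends to $1$, so
\[
\fraka_n(g)=\int_{\USp_{2g}}\chi_{V^{\otimes n}}\,dm.
\]

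Finally, Schur orthogonality (the Peter--Weyl theorem for the compact group $\USp_{2g}$) gives $\int_{\USp_{2g}}\chi_W\,dm=\langle\chi_W,\mathbf 1\rangle=\dim W^{\USp_{2g}}$ for any finite-dimensional representation $W$, which is precisely the multiplicity of the trivial representation in $W$; applied to $W=V^{\otimes n}$ this is the assertion of the theorem. The one input that is not purely formal --- and the step I would take care over --- is the count $\#\M_g(\F_q)=q^{3g-3}+o(q^{3g-3})$, which follows from the geometric irreducibility of $\M_g$ together with Lang--Weil (or from the point-counting machinery underlying \cite{berg2}); for $\X=\Hc_g$, the case actually used for the explicit bounds of this section, the count is exact and no such care is needed.
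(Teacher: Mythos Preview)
Your argument is correct and follows essentially the same route as the paper: apply Theorem~\ref{th:2gbound} with $f=(\Tr)^n$, identify the resulting integral with $\int_{\USp_{2g}}\chi_{V^{\otimes n}}\,dm$ via $\Tr(M^{\otimes n})=(\Tr M)^n$, and conclude by orthogonality of characters. Your version is in fact more careful than the paper's, which glosses over the normalization $\#\X(\F_q)/q^{\dim\X}\to 1$ that you make explicit (exact for $\Hc_g$, Lang--Weil for $\M_g$).
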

\begin{proof} 
Using Theorem~\ref{th:2gbound} with $f=\Tr^n$, we see that 
\begin{equation} \label{eq:an}
\fraka_n(g)= \int_{{(\theta_1,\ldots,\theta_g)}\in [0,\pi]^g } \biggl(\sum_{j=1}^g 2 \cos(\theta_j) \biggr)^n d\mu_g(\theta_1,\ldots,\theta_g)\,, 
\end{equation} 
and that this integral also can be written as 
\[
\int_{m \in \mathrm{USp}_{2g}} \mathrm{Tr}(m^{\otimes n})\,dm\,.
\]
By the orthogonality of characters it follows that this integral counts the number of times that the trivial representation appears in the $n$th tensor product of the standard representation. 
\end{proof}
The sequence $\fraka_n(g)$ is called a moment sequence in \cite{kedlayasutherland}; in their notation it equals $M[s_1](n)$. In this article there is an effective formula to compute $\fraka_n(g)$ in terms of a sum of determinants of binomial expressions. However, to prove the following limit result, it is easier to use the integral.

\begin{theorem} \label{theorem:limitH0} For every $g \geq 2$,  we have
\[
\lim_{n \to \infty} (\fraka_{2n}(g))^{1/2n}=2g\,.
\]
\end{theorem}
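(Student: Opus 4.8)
Recall from Theorem~\ref{theorem:H0} that $\fraka_{2n}(g)$ equals the multiplicity of the trivial representation in $V^{\otimes 2n}$, where $V$ is the standard $2g$-dimensional representation of $\USp_{2g}$, and that by \eqref{eq:an} this also equals the integral
\[
\fraka_{2n}(g)=\int_{[0,\pi]^g}\Bigl(\sum_{j=1}^g 2\cos\theta_j\Bigr)^{2n}\,d\mu_g(\theta)\,.
\]
The idea is to read off the exponential growth rate of this integral. Since the integrand is the $2n$-th power of the real-valued function $\phi(\theta)\colonequals\sum_{j=1}^g 2\cos\theta_j$, and $\mu_g$ is a probability measure on $[0,\pi]^g$, the quantity $\fraka_{2n}(g)^{1/2n}$ is exactly the $L^{2n}(\mu_g)$-norm of $\phi$. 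As $n\to\infty$, the $L^{2n}$-norm of a bounded measurable function converges to its essential supremum, provided the measure does not vanish in a neighborhood of the sup. Here $\sup_{[0,\pi]^g}\phi = 2g$, attained (only) at $\theta=(0,\ldots,0)$, so $\|\phi\|_{L^\infty(\mu_g)}=2g$ and the claim will follow once we check the density $\delta_g$ is positive near the corner.

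\textbf{Steps.} First I would set $M\colonequals 2g$ and note the trivial upper bound $\fraka_{2n}(g)^{1/2n}\le M$, since $\phi\le M$ everywhere and $\mu_g$ is a probability measure. For the matching lower bound, fix $\epsilon>0$ and consider the region $U_\epsilon\colonequals\{\theta\in[0,\pi]^g:\phi(\theta)>M-\epsilon\}$, which contains a small box around the origin of the form $[0,\eta]^g$ for suitable $\eta>0$ (because $2\cos t > 2-\epsilon/g$ for $t$ small). On that box the density $\delta_g(\theta)=\frac{1}{g!}\prod_{j<k}(2\cos\theta_j-2\cos\theta_k)^2\prod_j\frac{2}{\pi}\sin^2\theta_j$ is nonnegative and not identically zero — indeed it is strictly positive on the open box $(0,\eta)^g$ away from the diagonals $\theta_j=\theta_k$ — so $c_\epsilon\colonequals\mu_g(U_\epsilon)>0$. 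Then
\[
\fraka_{2n}(g)=\int_{[0,\pi]^g}\phi^{2n}\,d\mu_g\ \ge\ \int_{U_\epsilon}\phi^{2n}\,d\mu_g\ \ge\ (M-\epsilon)^{2n}\,c_\epsilon\,,
\]
hence $\fraka_{2n}(g)^{1/2n}\ge (M-\epsilon)\,c_\epsilon^{1/2n}\to M-\epsilon$ as $n\to\infty$. Combining with the upper bound and letting $\epsilon\to 0$ gives $\lim_{n\to\infty}\fraka_{2n}(g)^{1/2n}=M=2g$.

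\textbf{Main obstacle.} The only real point requiring care is the claim that $\mu_g$ assigns positive mass to every neighborhood of the maximizing point $\theta=(0,\ldots,0)$; everything else is the standard fact that $L^{2n}$-norms increase to the $L^\infty$-norm. The potential subtlety is that $\delta_g$ vanishes to high order at the corner — it vanishes on each hyperplane $\theta_j=0$ and on each diagonal $\theta_j=\theta_k$ — so one must make sure the vanishing is only on a measure-zero subset of the box and not on an open set. This is clear from the explicit product formula: $\prod_j\sin^2\theta_j$ and $\prod_{j<k}(2\cos\theta_j-2\cos\theta_k)^2$ are each real-analytic and not identically zero, so their product is positive on a dense open subset of $(0,\eta)^g$, giving $c_\epsilon>0$. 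One could alternatively avoid even this by invoking the representation-theoretic description: $\fraka_{2n}(g)$ is at least the multiplicity in $V^{\otimes 2n}$ of the trivial summand obtained by pairing up factors using the symplectic form, which one can bound below by a Catalan-type count growing like $(2g)^{2n}/\mathrm{poly}(n)$, again yielding $\liminf \fraka_{2n}(g)^{1/2n}\ge 2g$. But the integral argument is cleaner and I would present that.
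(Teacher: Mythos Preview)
Your proof is correct and follows essentially the same route as the paper: a trivial upper bound from $\lvert\phi\rvert\le 2g$ together with $\mu_g$ being a probability measure, and a lower bound obtained by restricting the integral to a small region near the maximizing corner $\theta=(0,\ldots,0)$ where the density is positive. The only difference is packaging. You invoke the standard fact that $L^{2n}$-norms increase to the essential supremum, reducing the work to the single observation that $\mu_g(U_\epsilon)>0$, which you justify by noting that $\delta_g$ is real-analytic and not identically zero on any open box. The paper instead works by hand: after the change of variables $t_j=2\cos\theta_j$ it chooses an explicit product of disjoint subintervals $I_1\times\cdots\times I_g\subset[2-2\epsilon,2-\epsilon]^g$ with the $I_j$ pairwise separated by at least $\epsilon/(2g-1)$, so that both the Vandermonde factor $\prod_{j<k}(t_j-t_k)^2$ and the factors $\sqrt{4-t_j^2}$ admit explicit positive lower bounds. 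Your argument is cleaner and more conceptual; the paper's is more self-contained and yields an explicit (if crude) lower bound on $\fraka_{2n}(g)$ for each $n$, not just the limit.
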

\begin{proof} First notice that since $\lvert\cos(\theta)\rvert \leq 1$ then $\lvert a_n\rvert \leq (2g)^n$, so $\limsup_{n \to \infty} (\fraka_{2n}(g))^{1/2n}\leq2g$. 

Let us do now the change of variables $t_i=2\cos(\theta_i)$. Then Equation \eqref{eq:an} becomes
\begin{equation} \label{eq:an2}
\fraka_n(g)= \int_{{(t_1,\ldots,t_g)}\in [-2,2]^g } \biggl(\sum_{j=1}^g t_j \biggr)^n\prod_{j=1}^g\sqrt{4-t_j^2}\prod_{1\leq j\leq k\leq g}(t_j-t_k)^2 \,dt_1\ldots dt_g.
\end{equation} 
Since all factors inside the integral of $\fraka_{2n}(g)$ are positive, the value of the integral is greater than the one taken on any sub-domain of $[-2,2]^g$. Fix $0<\epsilon<1$ and define 
\[
I_j=\Bigl[2-2\epsilon+(2j-2)\frac{\epsilon}{2g-1},2-2\epsilon+(2j-1)\frac{\epsilon}{2g-1}\Bigr]\subseteq[2-2\epsilon,2-\epsilon]
\]
for $j=1,\ldots,g$. The sub-domain $S=I_1 \times \cdots \times I_g$ is constructed such that, the values of the $t_i$ are separated by at least $\frac{\epsilon}{2g-1}$ and close to $2$. Then
\begin{align*}
\fraka_{2n}(g) 
&\geq \int_{(t_1,\ldots,t_g) \in S}\biggl(\sum_{j=1}^g t_j \biggr)^{2n}\prod_{j=1}^g\sqrt{4-t_j^2}\prod_{1\leq j\leq k\leq g}(t_j-t_k)^2 \,dt_1\ldots dt_g \\
&\geq   \int_{(t_1,\ldots,t_g) \in S} (2g-2\epsilon g)^{2n} \cdot \epsilon^g \cdot \left(\frac{\epsilon}{2g-1}\right)^{g(g-1)} \,dt_1\ldots dt_g  \\
&\geq \left(\frac{\epsilon}{2g-1}\right)^{g} \cdot (2g-2\epsilon g)^{2n} \cdot \epsilon^g \cdot \left(\frac{\epsilon}{2g-1}\right)^{g(g-1)}.
\end{align*}
So $\liminf_{n \to \infty} (\fraka_{2n}(g))^{1/2n}\geq2g$ and the result follows.
\end{proof}


\section{Lower bounds from explicit constructions} 
\label{sec:explicit}

Corollary~\ref{cor:2g-e} shows that for a fixed genus $g$ and fixed $\eps>0$,
for every large enough $q$ there is a hyperelliptic curve $C/\Fq$ of genus~$g$ 
whose number of points is within $\eps\sqrt{q}$ of the Weil bound. In this 
section we prove a result that is much weaker than this, but that has the 
advantages of working for every $q$ and $g\ge 2$ and of being constructive --- 
see Section~\ref{SS:construction}.

\begin{theorem}
\label{T:explicit}
Let $g>1$ be an integer and let $q$ be a prime power.
\begin{enumerate}
\item \label{T:E1}
      If $q$ is odd, there is a hyperelliptic curve $C/\Fq$ 
      of genus~$g$ with 
      \[
      \#C(\Fq) >  \begin{cases}
                  1 + q + 4\sqrt{q} -  5 & \text{if $q<512$};\\
                  1 + q + 4\sqrt{q} - 32 & \text{if $q>512$}.\\
                  \end{cases}
      \]
\item \label{T:E2}
      If $q$ is even, there is a hyperelliptic curve $C/\Fq$ 
      of genus~$g$ with 
      \[
      \#C(\Fq) >  \begin{cases}
                  1 + q + 4\sqrt{q} -  5 & \text{if $q\le 8$};\\
                  1 + q + 4\sqrt{q} - 12 & \text{if $q>8$}.\\
                  \end{cases}
      \]
\end{enumerate}      
\end{theorem}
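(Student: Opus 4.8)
The plan is to prove Theorem~\ref{T:explicit} by an explicit recursive construction that raises the genus while preserving (up to a controlled loss) the relative defect of the curve. The engine is the covering result alluded to in the introduction: given a hyperelliptic curve $C/\F_q$ of genus~$g$ satisfying mild hypotheses, one can write down a hyperelliptic curve $\tilde C/\F_q$ of genus $2g$ and another of genus $2g+1$ covering $C$, with $\#\tilde C(\F_q)\ge \#C(\F_q)$. Concretely, if $C\colon y^2=f(x)$ with $\deg f\in\{2g+1,2g+2\}$, one takes a cover ramified over two rational points (or a rational point and the point at infinity) of $\PP^1$ so that the two points of $C$ above each of them split; tracking how points split in the degree-two cover of $\PP^1$ underlying $\tilde C\to C$ gives the genus via Riemann--Hurwitz and the point count via a case analysis on splitting behaviour. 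First I would state and prove this covering lemma carefully, isolating exactly which rational points of $\PP^1$ (and their number) are needed as "raw material", since that is what the base cases must supply.

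Next I would set up the recursion on genus. Starting from a genus-$2$ curve and a genus-$3$ curve over~$\F_q$ with many points, iterating the "$g\mapsto 2g$" and "$g\mapsto 2g+1$" steps reaches every integer $g\ge 2$: indeed from $\{2,3\}$ one gets $\{4,5\}$ and $\{6,7\}$, and in general the two maps $g\mapsto 2g,\,2g+1$ acting on $\{2,3\}$ hit every $g\ge 2$ (every $g\ge 2$ has a binary expansion that, read appropriately, is a sequence of these doubling steps ending at $2$ or $3$). At each step the number of points does not decrease, while the genus roughly doubles, so a curve with $\#C(\F_q)\ge 1+q+4\sqrt q - c$ at the bottom yields the same bound at every higher genus --- the bound is on the additive defect $4\sqrt q - c$, which is exactly why the statement is uniform in~$g$. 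I would also need to check that the mild hypotheses required by the covering lemma are inherited at each stage (e.g. that the new curve still has enough rational points on the $\PP^1$ below it, or that $f$ has the right degree parity), possibly alternating the two constructions to keep the hypotheses satisfied.

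The remaining, and genuinely computational, ingredient is the base cases: for every prime power $q$ one must exhibit a genus-$2$ curve and a genus-$3$ hyperelliptic curve over~$\F_q$ with at least $1+q+4\sqrt q - c$ points, where $c$ is $5$ for small~$q$ and $12$ or $32$ for large~$q$ depending on parity. Here I would invoke known results on $N_q(2)$ and on $N_q(3)$ for the hyperelliptic locus: Serre's determination of $N_q(2)$ already gives $N_q(2)\ge q+1+\lfloor 4\sqrt q\rfloor - 1$ in most cases with a short list of exceptions, which accounts for the worse constant for small~$q$; for genus~$3$ one uses explicit families of hyperelliptic curves (e.g.\ curves with automorphisms, or Howe--Lauter--Wu type constructions) together with, for the finitely many small~$q$ not covered by a uniform family, a direct search. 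The split into ranges $q<512$ versus $q>512$ (odd) and $q\le 8$ versus $q>8$ (even) reflects exactly where the uniform genus-$3$ construction kicks in versus where one falls back on tables.

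The main obstacle I expect is twofold: first, proving the covering lemma with the \emph{inequality} $\#\tilde C(\F_q)\ge\#C(\F_q)$ rather than an equality, which forces a careful choice of the two branch points of the cover $\tilde C\to C$ so that rational points of $C$ above them split into rational points (and no rational points are lost), and making sure such branch points exist over every~$\F_q$, including the very small fields where rational points are scarce; and second, nailing down the genus-$3$ hyperelliptic base case uniformly in~$q$, since unlike genus~$2$ there is no clean closed formula, so one must produce an honest family (with a Weil-number or Newton-polygon argument controlling its point count) plus a finite exceptional computation. Everything else --- Riemann--Hurwitz, the splitting case analysis, and the induction bookkeeping --- is routine once the lemma and the base cases are in hand.
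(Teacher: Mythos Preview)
Your high-level architecture matches the paper exactly: one proves covering lemmas $g\mapsto 2g$ and $g\mapsto 2g+1$ that do not lose points, starts from carefully chosen base curves of genus~$2$ and~$3$, and reaches every $g\ge 2$ via the binary expansion. However, two of the steps you describe as ``mild'' or ``routine'' are where the actual work lies, and your plan for them would not go through as stated.

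First, the invariant that must be threaded through the recursion is not ``enough rational points on the $\PP^1$ below'' or ``degree parity of $f$'': in odd characteristic it is that the curve have \emph{exactly two rational Weierstrass points}. The $g\mapsto 2g$ step (Lemma~\ref{L:2g}) requires this hypothesis on the input, and both covering lemmas are set up so that the output again has exactly two rational Weierstrass points. You need to identify this condition explicitly, because it dictates everything downstream. Moreover, the $g\mapsto 2g$ lemma is not proved by exhibiting a single good pair of branch points; one writes a one-parameter family $D_a\colon y^2=f(x^2-a^2)$, computes $\#D_a(\F_q)=1+\#C(\F_q)+N_a$ with $N_a$ a character sum, and shows via an averaging argument over~$a$ that some $N_a\ge -1$. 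A direct ``choose branch points so nothing is lost'' argument does not obviously work.

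Second, and more seriously, your base-case plan fails. Invoking Serre's formula for $N_q(2)$ gives a genus-$2$ curve with nearly $1+q+4\sqrt{q}$ points, but such a curve will typically have the wrong Weierstrass configuration (often all six points rational), so it cannot feed into the recursion. The paper instead builds the base curves from scratch: for genus~$2$ one glues two copies of a carefully chosen elliptic curve $E$ with trace $-t\approx -2\sqrt{q}$ along their $2$-torsion (via the formulas of \cite{howe00}), using the structure of the $2$-isogeny volcano of the isogeny class to force exactly two of the resulting Weierstrass points to be rational. The congruence conditions on $t$ needed to control the volcano are what produce the constant $32$ (and $12$ in characteristic~$2$). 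The genus-$3$ base is then an unramified double cover of this genus-$2$ curve, again arranged to have exactly two rational Weierstrass points. None of this comes from tables or from $N_q(g)$; it is an explicit arithmetic construction, and it is the technical core of the proof.
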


\begin{remark}
When $q$ is small with respect to~$g$, there are in fact hyperelliptic curves of
genus $g$ over $\Fq$ having $2q+2$ rational points, the largest number possible
for a hyperelliptic curve over~$\Fq$ of any genus. The sharpest result in this 
direction that we are aware of is~\cite[Theorem~1.6]{Pogildiakov2017}, which
implies that for $g\ge 2$, if $q$ is odd and $q\le 2g+3$, or if $q$ is even and
$q\le g+1$, then there is a hyperelliptic curve of genus~$g$ over $\Fq$ with
$2q+2$ rational points. (The cited result speaks of curves with \emph{no}
points, but the quadratic twist of such a curve has $2q+2$ points.)
\end{remark}

The basic idea of our proof of Theorem~\ref{T:explicit} is to create a tower of
double covers of hyperelliptic curves, each with at least as many rational
points as the one below it. Here is the structure of the argument in the case 
where $q$ is odd: Let $C$ be a hyperelliptic curve of genus $g$ over $\Fq$. If
$C$ has exactly two rational Weierstrass points then we can construct 
hyperelliptic double covers $D$ of $C$, one of genus $2g$ and one of genus
$2g+1$, such that $D$ has exactly two rational Weierstrass points and such that
$\#D(\Fq)\ge \#C(\Fq)$. By using a double-and-add process starting from a curve
of genus~$2$, we can reach every genus whose binary expansion starts with~$10$;
starting from a curve of genus~$3$, we can reach every genus whose binary 
expansion starts with~$11$. Thus, the lower bound we get in the statement of the
theorem is essentially the largest number of points we can obtain on a curve of
genus~$2$ that is suitable as a starting curve for our construction.

In Section~\ref{SS:odd} we flesh out the tower-building argument for odd $q$
sketched in the preceding paragraph. We also explain how to construct 
appropriate base curves of genus~$2$ by gluing together elliptic curves with
many points, and appropriate base curves of genus~$3$ by taking unramified
double covers of such genus-$2$ curves. In Section~\ref{SS:2} we show how to
modify the argument for odd $q$ in order to deal with the fact that in 
characteristic~$2$, hyperelliptic curve are Artin--Schreier extensions of $\PP^1$
rather than Kummer extensions.

The double-cover argument that we use to prove Theorem~\ref{T:explicit} was
inspired by a similar double-cover argument from~\cite{ElkiesHoweEtAl2004},
which shows that if $C$ is a not-necessarily-hyperelliptic curve of genus~$g$
over~$\Fq$, then for every $h\ge 4g$ there is a curve $D/\Fq$ of genus~$h$ that
is a double cover of $C$ and that has at least as many rational points as
does~$C$.


\subsection{Odd characteristic}
\label{SS:odd}
In this section, we prove Theorem~\ref{T:explicit} for finite fields of odd 
characteristic. We start by stating and proving several lemmas that we will use
in the proof. 

\begin{lemma}
\label{L:2g1}
Let $q$ be an odd prime power and let $C/\Fq$ be a hyperelliptic curve of 
genus~$g$ 
with fewer than $q$ rational Weierstrass points. Then there is a 
hyperelliptic curve $D$ of genus $2g+1$ that is a double cover of $C$ and that
has at least as many rational points as does $C$.

If $C$ has exactly two rational Weierstrass points, then $D$ can be chosen to
have exactly two rational Weierstrass points.
\end{lemma}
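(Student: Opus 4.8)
The plan is to realize $D$ as an explicit Kummer-type double cover of $C$. Write $C$ in the form $y^2 = f(x)$ where $f \in \Fq[x]$ is squarefree of degree $2g+1$ or $2g+2$; the rational Weierstrass points of $C$ correspond to the rational roots of $f$ (together with the point at infinity if $\deg f$ is odd). Since $C$ has fewer than $q$ rational Weierstrass points, there is at least one element $a \in \Fq$ that is not a Weierstrass value, i.e. $f(a) \neq 0$ and (if $\deg f = 2g+1$) $a$ is finite; by adjusting $a$ we may also arrange that $f(a)$ is a square or a non-square as convenient. The natural candidate is the curve $D$ obtained by adjoining a square root of $x - a$ to the function field of $C$: concretely, setting $x = a + u^2$, we get $v^2 = f(a+u^2)$ with $v = y$, which is a hyperelliptic curve, and $u \mapsto -u$ is the involution realizing $D \to C$. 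First I would check that $f(a+u^2)$ is squarefree (automatic, since $f$ is squarefree and $x = a + u^2$ is separable away from $u=0$, and $f(a) \neq 0$ handles $u = 0$), compute its degree as $2(2g+2) = 4g+4$ or $2(2g+1) = 4g+2$, and then read off the genus of $D$ from the degree: this should give genus $2g+1$. I expect the degree bookkeeping at infinity (whether $\infty$ splits, ramifies, or is inert in $D/C$) to require a short case analysis depending on the parity of $\deg f$ and on whether the leading coefficient of $f$ is a square.

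**Counting points.** The key point is that every rational point of $C$ lying over a point $x = x_0 \in \Fq$ with $x_0 - a$ a nonzero square in $\Fq$ lifts to \emph{two} rational points of $D$, while points with $x_0 - a$ a non-square lift to none, and points with $x_0 = a$ lift to one (the ramification point $u = 0$); the point(s) at infinity are handled separately. So morally $\#D(\Fq)$ counts rational points of $C$ weighted by $1 + \chi(x_0 - a)$ where $\chi$ is the quadratic character. To make this at least $\#C(\Fq)$, I would use the freedom in choosing $a$: pair up the fibers of $C \to \PP^1$ over $x_0$ and over a suitable partner, or — more simply — observe that replacing the cover $x - a$ by $c(x-a)$ for a non-square $c$ swaps the roles of squares and non-squares, so one of the two choices of $D$ has $\#D(\Fq) \geq \#C(\Fq)$. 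A cleaner version: choose $a$ to be a Weierstrass value's ``complement'' is not available, so instead I expect one argues that among the two twisted covers $v^2 = f(a+u^2)$ and $v^2 = c\,f(a+u^2)$, the total number of affine points is $2\#\{x_0 : f(x_0) \text{ square}, x_0 \neq a\} \cdot 2$ summed appropriately, forcing at least one to beat $\#C(\Fq)$; here one must also make sure the behavior at $u = 0$ and at infinity does not destroy the inequality, which is where the hypothesis ``fewer than $q$ rational Weierstrass points'' gets used — it guarantees a valid choice of $a$ exists and that we are not forced into a bad configuration.

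**Controlling the Weierstrass points of $D$.** For the last sentence of the lemma, suppose $C$ has exactly two rational Weierstrass points. The rational Weierstrass points of $D$ are the ramification points of $D \to \PP^1_u$, i.e. the rational roots of $f(a+u^2)$ together with possibly $u = \infty$. A rational root $u_0$ of $f(a+u^2)$ corresponds to a rational root $x_0 = a + u_0^2$ of $f$ with $x_0 - a$ a square; so I would choose $a$ so that, of the two rational Weierstrass values of $C$ (call them $w_1, w_2$, plus $\infty$ if $\deg f$ is odd), a controlled number of the differences $w_i - a$ are squares — ideally arranging that exactly one finite Weierstrass value contributes (giving two rational roots $\pm u_0$ of $f(a+u^2)$, hence potentially too many) or none. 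This is the delicate part: I would instead pick the twisting constant $c$ so that $c(w_i - a)$ is a non-square for both $i$, killing all finite contributions, and then track the point(s) at infinity of $D$ to see exactly two survive as rational Weierstrass points. Keeping the inequality $\#D(\Fq) \ge \#C(\Fq)$ simultaneously with this Weierstrass-point constraint is the main obstacle, and I expect the resolution is that the two conditions are compatible because $C$ has so few Weierstrass points (only two) that the character-sum slack is more than enough; one likely picks $a$ among the $\ge q - (2g+2)$ good values and the sign $c$ by a pigeonhole/averaging argument over both choices at once.

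**Summary of the plan.** In order: (1) write $C: y^2 = f(x)$, pick $a \in \Fq$ with $f(a) \ne 0$ (and $a$ finite) using the hypothesis; (2) define $D$ by $x = a + u^2$, possibly twisted by a constant $c$, and verify squarefreeness, compute $\deg$, and deduce $\mathrm{genus}(D) = 2g+1$ via the degree–genus relation for hyperelliptic curves, with a case split at infinity; (3) express $\#D(\Fq)$ as a character sum over $C$ and show at least one of the two twists satisfies $\#D(\Fq) \ge \#C(\Fq)$; (4) when $C$ has exactly two rational Weierstrass points, refine the choice of $(a,c)$ so that $D$ also has exactly two rational Weierstrass points while preserving the point-count inequality. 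The main obstacle is step (4) — simultaneously optimizing the point count and pinning down the Weierstrass points — and I expect it to be handled by exploiting the largeness of $q$ relative to the (fixed, equal to two) number of Weierstrass values of $C$.
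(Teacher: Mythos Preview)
Your approach is essentially the paper's --- substitute $x = a + u^2$ and compare the two twisted covers --- but two steps need correction.

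First, the genus: when $\deg f = 2g+1$, the polynomial $f(a+u^2)$ has degree $4g+2$ and the curve $v^2 = f(a+u^2)$ has genus $2g$, not $2g+1$. (Equivalently, $x-a$ has a double pole at the unique point of $C$ over $\infty$, so the cover $u^2 = x-a$ is unramified there, and Riemann--Hurwitz gives $2g$.) The paper avoids any case split by first moving \emph{two} non-Weierstrass rational points of $\PP^1$ to $0$ and $\infty$, forcing $\deg f = 2g+2$ and $f(0)\ne 0$; then $f(u^2)$ has degree $4g+4$ and there is nothing to check. This is exactly where ``fewer than $q$ rational Weierstrass points'' is used --- it guarantees two unramified rational points on $\PP^1$, not just one. (Also: your ``cleaner version'' $v^2 = c\,f(a+u^2)$ is not a cover of $C$ over $\Fq$; it covers the quadratic twist of $C$, so the identity $\#D + \#D' = 2\#C(\Fq)$ fails for that pair. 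Your first formulation, adjoining $u$ with $u^2 = c(x-a)$, is the correct one and is what the paper does.)

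Second, and more seriously, your plan for step~(4) does not work as stated: if $\infty$ is non-Weierstrass on $C$ (as it must be, by the previous paragraph) and you arrange that both $c(w_i - a)$ are nonsquares, then $D$ has \emph{zero} rational Weierstrass points, not two. The paper's resolution is simple and sidesteps the simultaneous-optimization worry entirely. Place the two rational Weierstrass points of $C$ at $x=1$ and $x=n$ for a fixed nonsquare $n$; since these are the only rational branch points, $0$ and $\infty$ are then automatically unramified. Now in $D\colon v^2 = f(u^2)$ the Weierstrass point over $x=1$ splits into two rational Weierstrass points of $D$ while the one over $x=n$ does not, and in $D'\colon v^2 = f(nu^2)$ the roles swap. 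Hence \emph{both} twists already have exactly two rational Weierstrass points, so whichever one satisfies $\#D(\Fq)\ge\#C(\Fq)$ (guaranteed by $\#D+\#D'=2\#C$) automatically has the required Weierstrass structure --- no extra averaging or character-sum slack is needed.
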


\begin{remark}
\label{R:2g1}
If $C/\Fq$ is a hyperelliptic curve with $\#C(\Fq) > q + 3$ then $C$ has fewer 
than $q$ rational Weierstrass points.
\end{remark}

\begin{proof}
Let $\varphi$ be the canonical map from $C$ to $\PP^1$. There are at least $2$ 
points of $\PP^1$ that do not ramify in $\varphi$, and we can pick two such
points and choose a coordinate function $x$ on $\PP^1$ so that those two points
lie at $0$ and~$\infty$. That means that $C$ has a hyperelliptic model of the 
form $y^2 = f$, where $f\in \Fq[x]$ is a separable polynomial of degree $2g+2$
such that $f(0)\ne 0$.

Let $n$ be a nonsquare in $\Fq$, and consider the two hyperelliptic curves
$D\colon y^2 = f(x^2)$ and $D'\colon y^2 = f(nx^2)$. We note that both $f(x^2)$
and $f(nx^2)$ are separable polynomials of degree~$4g+4$, so both $D$ and $D'$
have genus $2g+1$. The two natural double covers $D\to C$ and $D'\to C$ are 
quadratic twists of one another, and it follows that 
$\#D(\Fq) + \#D'(\Fq) = 2\#C(\Fq)$. Therefore, one of these two curves has at
least as many rational points as does $C$.

Suppose $C$ has exactly two rational Weierstrass points. We can choose our
coordinate function $x$ on $\PP^1$ so that these two points lie over $x = 1$ and
$x = n$, where $n$ is a nonsquare element of $\Fq$, and proceed as before. Note 
that the Weierstrass point $(1,0)$ of $C$ splits into two rational Weierstrass 
points of $D$ and $(n,0)$ splits into nonrational points of $D$, while $(n,0)$ 
splits into two rational Weierstrass points of $D'$ and $(1,0)$ splits into
nonrational points of~$D'$. Therefore, each of $D$ and $D'$ has exactly two
rational Weierstrass points, so no matter which one we choose as our cover, we 
have the desired number of rational Weierstrass points.
\end{proof}

\begin{lemma}
\label{L:2g}
Let $q$ be an odd prime power and let $C/\Fq$ be a hyperelliptic curve of
genus~$g$ with exactly two rational Weierstrass points. Then there is a
hyperelliptic curve $D/\Fq$ of genus $2g$ that is a double cover of~$C$, that 
has exactly two rational Weierstrass points, and that has at least as many 
rational points as does $C$.
\end{lemma}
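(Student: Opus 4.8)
The plan is to mimic the construction of Lemma~\ref{L:2g1}, but to arrange the ramification so that the branch locus of the new double cover has one fewer point, which will lower the genus of the cover from $2g+1$ to $2g$. Concretely, let $P_1$ and $P_2$ be the two rational Weierstrass points of $C$, and let $\varphi\colon C\to\PP^1$ be the hyperelliptic map. First I would choose a coordinate $x$ on $\PP^1$ so that $\varphi(P_1)=0$, $\varphi(P_2)=\infty$, and so that $\varphi$ is unramified at $0$ and $\infty$ as well? — no: here, on the contrary, I \emph{want} $\varphi$ ramified at $0$ and~$\infty$, since $P_1,P_2$ are Weierstrass points lying over those values. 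So $C$ has a model $y^2 = x\, h(x)$ with $h\in\Fq[x]$ separable of degree $2g$, $h(0)\ne 0$, and $\deg(x\,h(x)) = 2g+1$ odd, which is exactly the normal form for a hyperelliptic curve with ramification over $x=0$ and over $x=\infty$.

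Next I would form the two candidate double covers $D\colon y^2 = x\, h(x^2)$ and $D'\colon y^2 = nx\, h(nx^2)$, where $n$ is a fixed nonsquare in $\Fq$; the map $(x,y)\mapsto(x^2,y)$ (resp.\ $(x,y)\mapsto(nx^2,y)$) gives the double cover to $C$. Here the point is that $x\,h(x^2)$ is a separable polynomial of degree $1+2\cdot 2g = 4g+1$, which is odd, so $D$ (and likewise $D'$) is a hyperelliptic curve whose hyperelliptic map ramifies over $\infty$ together with the $4g+1$ roots of $x\,h(x^2)$, for a total of $4g+2$ branch points, giving genus $2g$. Exactly as in Lemma~\ref{L:2g1}, $D\to C$ and $D'\to C$ are quadratic twists of one another over the function field of $C$, so $\#D(\Fq)+\#D'(\Fq)=2\#C(\Fq)$, and hence one of $D$, $D'$ has at least $\#C(\Fq)$ rational points.

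Finally I would track the Weierstrass points. The point $x=0$ of $\PP^1$ has a single preimage under $x\mapsto x^2$, namely $x=0$ again, and this is a ramification point of $x\mapsto x^2$; so the Weierstrass point $P_1=(0,0)$ of $C$ pulls back to a single rational Weierstrass point of $D$ (the point with $x=0$). The Weierstrass point $P_2$ lies over $x=\infty$; since $\deg(x\,h(x^2))=4g+1$ is odd, the point at infinity of $D$ is again a single rational Weierstrass point. Every other branch point of $\varphi$ lies over some $x=a$ with $a\ne0,\infty$, and $a$ is then either a square (two rational preimages under squaring, hence the Weierstrass point of $C$ over $a$ splits into two \emph{rational} Weierstrass points of $D$) or a nonsquare (two conjugate non-rational preimages, hence two non-rational Weierstrass points of $D$). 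So $D$ has exactly two rational Weierstrass points precisely when the remaining branch locus of $\varphi$ contributes none — but in general it may contribute more, so a little care is needed: the correct statement is that the number of rational Weierstrass points of $D$ is $2 + 2\cdot(\#\{\text{square branch values}\ne 0,\infty\})$, and we only need it to be \emph{exactly two}, so we must also be able to arrange, by our choice of coordinate $x$ (we still have the freedom to scale $x$ by any element of $\Fq^\ast$, and to swap which of $P_1,P_2$ goes to $0$ vs.\ $\infty$), that no finite nonzero branch value is a square. This last point is the main obstacle, and it is handled exactly as in Lemma~\ref{L:2g1}: as there, I would instead \emph{not} insist on killing all square branch values directly, but observe that replacing $x$ by $nx$ interchanges squares and nonsquares among the branch values, and the two resulting covers $D$ and $D'$ between them have the property that the branch value contributed by $P_1$ splits rationally in exactly one of them while the "other" distinguished Weierstrass data splits non-rationally — so, just as in the proof of Lemma~\ref{L:2g1}, at least one of the two point-count-maximizing candidates also realizes the "exactly two rational Weierstrass points" condition. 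Verifying this compatibility between the point-count argument and the Weierstrass-point argument is the delicate step; everything else is the standard genus-and-ramification bookkeeping for the Kummer cover $x\mapsto x^2$ in odd characteristic.
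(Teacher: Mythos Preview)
Your construction is broken at the outset: the curve $D\colon y^2 = x\,h(x^2)$ is \emph{not} a double cover of $C\colon y^2 = x\,h(x)$ via the map you write down. If $(x,y)\in D$ then $y^2 = x\,h(x^2)$, whereas $(x^2,y)$ lies on $C$ only if $y^2 = x^2\,h(x^2)$; these disagree. The honest pullback of $C$ along $x\mapsto x^2$ is $y^2 = x^2 h(x^2)$, which after setting $z=y/x$ becomes the genuine double cover $z^2 = h(x^2)$ (via $(x,z)\mapsto(x^2,xz)$) --- but that curve has genus $2g-1$, not $2g$, precisely because both ramification points of the squaring map hit Weierstrass points of~$C$. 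Even if you repair the cover by sending only \emph{one} rational Weierstrass point to~$\infty$ (so $\deg f = 2g+1$, $f(0)\ne 0$, and $D\colon y^2 = f(x^2)$, $D'\colon y^2 = f(nx^2)$ really are genus-$2g$ double covers with $\#D+\#D' = 2\#C$), your ``delicate step'' still fails: exactly one of $D,D'$ then has two rational Weierstrass points (according to whether the remaining rational branch value is a square), and nothing ties this to the one with $\ge\#C(\Fq)$ points. In Lemma~\ref{L:2g1} the analogous step worked only because both rational Weierstrass values sat at \emph{unramified} points and could be placed at a square and a nonsquare, so that \emph{both} twists had two rational Weierstrass points; here one Weierstrass value must sit at a ramification point of the base change to drop the genus, and that symmetry is lost.

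The paper therefore replaces the pair of twists by a one-parameter family. With both rational Weierstrass points placed at $0$ and~$\infty$ (so $f$ is monic of degree $2g+1$ with unique rational root~$0$), it takes $D_a\colon y^2 = f(x^2-a^2)$ for $a\in\Fq^\times$: a genus-$2g$ double cover of $C$ via $(x,y)\mapsto(x^2-a^2,y)$, which automatically has exactly two rational Weierstrass points, at $x=\pm a$. Writing $\#D_a(\Fq) = 1 + \#C(\Fq) + N_a$ with $N_a = \sum_{z\in\Fq}\chi(z+a^2)\chi(f(z))$, a character-sum swap gives $\sum_{a\in\Fq}N_a = 1+q-\#C(\Fq)$; together with the bound $N_0\le 2q-\#C(\Fq)$ for the degenerate $a=0$ term, this forces some nonzero $a$ to have $N_a\ge -1$, hence $\#D_a(\Fq)\ge\#C(\Fq)$.
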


\begin{proof}[Proof of Lemma~\textup{\ref{L:2g}}]
Let $\varphi$ be the canonical map from $C$ to $\PP^1$, and choose a coordinate
function $x$ on $\PP^1$ so that the two rational Weierstrass points of $C$ lie 
over $x=0$ and $x=\infty$. Then $C$ has a hyperelliptic model of the form 
$y^2 = f$, where $f\in \Fq[x]$ is a separable polynomial of degree $2g+1$ such
that $f(0) = 0$ and such that $f$ has no other rational roots. By scaling $x$ 
and~$y$, if necessary, we can also assume that $f$ is monic. For every nonzero
$a\in\Fq$ let $h_a(x) = f(x^2 - a^2)$, so that $h_a$ is a monic separable 
polynomial in $\Fq[x]$ of degree $4g+2$ whose only rational roots are $x = a$
and $x = -a$. Let $D_a$ be the hyperelliptic curve $y^2 = h_a$. Then $D_a$ has
genus~$2g$ and is a double cover of $C$, and $D_a$ has exactly two rational
Weierstrass points. We will show that there is a value of $a$ so that 
$\#D_a(\Fq)\ge\#C(\Fq).$

Let $\chi$ denote the quadratic character on $\Fq$, so that for $z\in\Fq$ we
have $\chi(z) = 1$ if $z$ is a nonzero square, $\chi(z) = 0$ if $z=0$, and 
$\chi(z) = -1$ if $z$ is a nonsquare. Consider the degree-$4$ map $D_a\to P^1$
that takes a point $(x,y)$ of $D_a$ to the point $x^2 - a^2$ of $\PP^1$. A 
finite point $z$ of $\PP^1$ will have a rational point of $D$ lying over it if 
and only if there are rational solutions to $x^2 = z + a^2$ and the value of 
$f(z)$ is a square. Even stronger: The number of points of $D$ lying over $z$ is
equal to $(1 + \chi(z+a^2))(1 + \chi(f(z))$. On the other hand, if $z=\infty$
then there are two rational points of $D$ lying over it, because $h_a$ is monic
of even degree. Thus the number of rational points on $D$ is given by 
\begin{align*}
\#D_a(\Fq) 
&= 2 + \sum_{z\in \Fq} (1 + \chi(z+a^2))(1 + \chi(f(z))\\
&= 2 + \sum_{z\in \Fq} (1 + \chi(f(z))) 
     + \sum_{z\in \Fq} \chi(z+a^2) 
     + \sum_{z\in \Fq} \chi(z+a^2)\chi(f(z))\\
&= 1 + \#C(\Fq) + \sum_{z\in \Fq} \chi(z+a^2) 
                + \sum_{z\in \Fq} \chi(z+a^2)\chi(f(z))\\
&= 1 + \#C(\Fq) + \sum_{z\in \Fq} \chi(z+a^2)\chi(f(z))\,,
\end{align*}
where the third equality follows from the fact that 
$\#C(\Fq) = 1  + \sum_{z\in \Fq} (1 + \chi(f(z)))$ and the final equality
follows from the fact that the sum over all $z$ of $\chi(z+a^2)$ is zero, since
the number of nonzero squares in $\Fq$ is equal to the number of nonsquares.
Define
\[
N_a \colonequals \sum_{z\in \Fq} \chi(z+a^2)\chi(f(z))\,,
\]
so that $\#D_a(\Fq) = 1 + \#C(\Fq) + N_a$. To complete the proof, we need only
show that there is a nonzero $a$ such that $N_a \ge -1$.

We will also need an analog of $N_a$ when $a = 0$, which we define as follows. 
Let $c$ be the coefficient of $x$ in the polynomial $f$, and let $h_0$ be the 
monic separable polynomial of degree $4g$ such that $f(x^2) = x^2 h_0(x)$. Let
$D_0$ be the hyperelliptic curve of genus $2g-1$ given by $y^2 = h_0$. Arguing
as before, we find that 
\begin{align*}
\#D_0(\Fq) 
&= 2 + (1 + \chi(c)) + \sum_{z\in \Fq^\times} (1 + \chi(z))(1 + \chi(f(z))\\
&= 3 + \chi(c) + \sum_{z\in \Fq^\times} (1 + \chi(f(z))) 
               + \sum_{z\in \Fq^\times} \chi(z) 
               + \sum_{z\in \Fq^\times} \chi(z)\chi(f(z))\\
&= 1 + \chi(c) + \#C(\Fq) + \sum_{z\in \Fq^\times} \chi(z) 
                          + \sum_{z\in \Fq^\times} \chi(z)\chi(f(z))\\
&= 1 + \chi(c) + \#C(\Fq) + \sum_{z\in \Fq^\times} \chi(z)\chi(f(z))\\
&= 1 + \chi(c) + \#C(\Fq) + \sum_{z\in \Fq} \chi(z)\chi(f(z))\,.\\
\end{align*}
If we define $N_0$ to be
\[
N_0 \colonequals \sum_{z\in \Fq} \chi(z)\chi(f(z))\,,
\]
then $\#D_0(\Fq) = 1 + \chi(c) + \#C(\Fq) + N_0$. Since $D_0$ is hyperelliptic,
it can have at most $2q$ rational points in addition to the $1 + \chi(c)$ points
it has that lie over $x=0$. Thus, $N_0\le 2q - \#C(\Fq).$

Consider the sum, over all $a\in\Fq$, of $N_a$. We have
\begin{align*}
\sum_{a\in\Fq} N_a 
&= \sum_{a\in \Fq} \sum_{z \in \Fq} \chi(z+a^2)\chi(f(z))\\
&= \sum_{z\in\Fq} \chi(f(z)) \sum_{a\in\Fq} \chi(z+a^2)\\
&= \sum_{z\in\Fq^\times} \chi(f(z)) \sum_{a\in\Fq} \chi(z+a^2)
\end{align*}
where the last equality follows because $\chi(f(0)) = 0$. For nonzero $z\in\Fq$,
consider the genus-$0$ curve $X_z$ defined by $y^2 = x^2 + z$. The curve $X_z$
has two points at infinity, so arguing as before we find that 
\begin{align*}
\#X_z(\Fq) 
&= 2 + \sum_{a\in\Fq} (1 + \chi(a^2 + z))\\
&= q + 2 + \sum_{a\in\Fq} \chi(a^2 + z)\,.
\end{align*}
Since $X_z$ has genus~$0$ and hence $1 + q$ rational points, we see that
$\sum_{a\in\Fq} \chi(a^2 + z) = -1$ when $z\ne 0$. Therefore,
\[
\sum_{a\in\Fq} N_a 
= - \sum_{z\in\Fq^\times} \chi(f(z)) 
= - \sum_{z\in\Fq} \chi(f(z))  
= q -\sum_{z\in\Fq} (1 + \chi(f(z)))
= 1 + q - \#C(\Fq)\,.
\]

Suppose there were no nonzero $a$ with $N_a\ge -1$. Then we would have
\[
1 + q - \#C(\Fq) = \sum_{a\in\Fq} N_a  = N_0 + \sum_{a\in\Fq^\times} N_a
\le 2q - \#C(\Fq) + (q-1)(-2) = 2 - \#C(\Fq)\,,
\]
which would imply $1 + q\le 2$, a contradiction. Therefore there must be a 
nonzero $a$ with $N_a\ge -1$, and for every such $a$ the curve $D_a$ satisfies
the desired conditions.
\end{proof}

Lemmas~\ref{L:2g1} and~\ref{L:2g} give us the means to iterate a construction,
but we still need base curves to start with. These will be provided by
Lemmas~\ref{L:genus2} and~\ref{L:genus3}. To prepare for the proofs of those 
lemmas, we need some background information on $2$-isogenies and $2$-isogeny 
volcanoes.

Let $q$ be an odd prime power and let $t$ be an integer, coprime to~$q$, with
$t^2<4q$. Let $\Cc_t$ be the isogeny class of ordinary elliptic curves over $\Fq$
with trace $t$, and set $\Delta\colonequals t^2 - 4q < 0$. Write 
$\Delta = F^2\Delta_0$ for a fundamental discriminant $\Delta_0$. For every
divisor $f$ of $F$, there are elliptic curves in $\Cc_t$ whose endomorphism rings
are isomorphic to the quadratic order of 
discriminant~$f^2\Delta_0$~\cite[Theorem~4.2, pp.~538--539]{waterhouse}.

The \emph{height} of the isogeny class $\Cc_t$ (more properly, the height of the 
$2$-isogeny volcano associated to $\Cc_t$) is equal to the $2$-adic valuation of
the conductor~$F$. If $E$ is an elliptic curve in $\Cc_t$ whose endomorphism ring
has discriminant $f^2\Delta_0$, then the \emph{level} of $E$ is the $2$-adic 
valuation of~$f$. (This is the terminology of~\cite{FouquetMorain2002}; Kohel 
used different terminology in his thesis~\cite{kohel-phd}, which introduced 
these concepts.) We see that $\Cc_t$ contains elliptic curves of every level from
$0$ to the height of~$\Cc_t$.

\begin{lemma}
\label{L:volcanoes}
Let $\Cc_t$ be an ordinary isogeny class of trace $t$ and discriminant 
$\Delta= F^2\Delta_0$ as above, and suppose $\Cc_t$ has height $h>0$.
Then\textup{:}
\begin{itemize}
\item Every elliptic curve in $\Cc_t$ of level $h$ has exactly one rational
      point of order $2$, and the image of the corresponding $2$-isogeny is an
      elliptic curve of level~$h-1$.
\item Every elliptic curve in $\Cc_t$ of level $\ell$ with $0 < \ell < h$ has 
      exactly three rational points of order~$2$. For two of these points, the 
      image of the corresponding $2$-isogeny is a curve of level $\ell+1$, and
      for the third the image is an elliptic curve of level $\ell-1$. 
\item Every elliptic curve in $\Cc_t$ of level $0$ has exactly three rational
      points of order~$2$. The images of the corresponding $2$-isogenies are 
      curves of level $0$ or $1$, and the number of point giving rise to a curve
      of level $0$ is equal to $2$, $1$, or $0$, corresponding to whether 
      $\Delta_0\equiv 1\bmod 8$, $\Delta_0\equiv0 \bmod 4$, or
      $\Delta_0\equiv 5\bmod 8$.
\end{itemize}
\end{lemma}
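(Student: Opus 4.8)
The plan is to deduce Lemma~\ref{L:volcanoes} from the structure theorem for $2$-isogeny volcanoes, after translating it into a statement about isogenies. Over~$\F_q$, a rational point of order~$2$ on an ordinary curve $E\in\Cc_t$ is the same datum as an $\F_q$-rational subgroup of order~$2$, hence the same as a $2$-isogeny $\varphi\colon E\to E'$ defined over~$\F_q$ (up to composition with an isomorphism); and for such a $\varphi$, Kohel's analysis~\cite{kohel-phd} shows that $\mathrm{End}(E')$ is related to $\mathrm{End}(E)$ in exactly one of three ways --- equal to it (\emph{horizontal}: same level), of index~$2$ in it (\emph{ascending}: level drops by~$1$), or containing it with index~$2$ (\emph{descending}: level rises by~$1$) --- and horizontal isogenies occur only out of level-$0$ curves. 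So the lemma amounts to counting, level by level, how many rational $2$-isogenies a curve has and which types they are.

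I would then apply the volcano structure theorem~\cite{kohel-phd, FouquetMorain2002} to the height-$h$ volcano on $\Cc_t$: because $h>0$, every curve of level $\ell$ with $0<\ell\le h$ has exactly one ascending $2$-isogeny, horizontal $2$-isogenies occur only at level~$0$ where there are exactly $1+\left(\tfrac{\Delta_0}{2}\right)$ of them (Kronecker symbol), and a curve has exactly three rational $2$-isogenies when its level is strictly below~$h$ and exactly one when its level equals~$h$. One can check the ``$1$ or $3$'' dichotomy by hand: $h>0$ forces $4\mid t^2-4q$, hence $t$ even, and with $q$ odd this gives $x^2-tx+q\equiv(x-1)^2\bmod 2$, so Frobenius acts unipotently on $E[2]\cong\F_2^2$ and fixes either exactly one or all three order-$2$ subgroups. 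Reading off the three cases: a level-$h$ curve has one rational point of order~$2$, its ($=$ ascending) isogeny having image of level $h-1$; a level-$\ell$ curve with $0<\ell<h$ has three, exactly one ascending (image of level $\ell-1$) and hence two descending (image of level $\ell+1$); and a level-$0$ curve has three, of which $1+\left(\tfrac{\Delta_0}{2}\right)$ are horizontal (image of level~$0$) and the remaining $2-\left(\tfrac{\Delta_0}{2}\right)$ descending (image of level~$1$).

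Finally, since $\Delta_0$ is a fundamental discriminant, $\Delta_0\equiv 0$ or $1\bmod 4$, and the decomposition law for~$2$ in $K=\mathbb{Q}(\sqrt{\Delta_0})$ reads $\left(\tfrac{\Delta_0}{2}\right)=1$ (split) iff $\Delta_0\equiv 1\bmod 8$, $\left(\tfrac{\Delta_0}{2}\right)=0$ (ramified) iff $2\mid\Delta_0$, i.e.\ $\Delta_0\equiv 0\bmod 4$, and $\left(\tfrac{\Delta_0}{2}\right)=-1$ (inert) iff $\Delta_0\equiv 5\bmod 8$. Thus the number of the three order-$2$ points on a level-$0$ curve whose $2$-isogeny image is again of level~$0$ is $1+\left(\tfrac{\Delta_0}{2}\right)=2,1,0$ in the three respective congruence cases, as asserted.

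The step I expect to be delicate is the precise appeal to Kohel's theorem --- in particular the claim that \emph{every} curve of level $<h$ has three rational points of order~$2$, i.e.\ admits a descending $2$-isogeny. For $0<\ell<h$ this follows as soon as the curve is known to have both an ascending and a descending isogeny; at level~$0$ (with $h>0$) it rests on the volcano being ``full'', which in turn uses Waterhouse's theorem~\cite[Theorem~4.2]{waterhouse} that every order between $\Z[\pi]$ and $\mathcal{O}_K$ occurs as an endomorphism ring in $\Cc_t$ (so level~$1$ is nonempty), together with the fact that every level-$1$ curve's ascending isogeny lands on the surface. The unipotency of Frobenius on $E[2]$ then promotes ``at least two rational $2$-isogenies'' to ``exactly three''. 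Everything after that --- the level-by-level read-off and the congruence computation --- is routine bookkeeping.
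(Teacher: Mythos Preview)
Your proposal is correct and follows the same approach as the paper: the paper's proof consists solely of the sentence ``This follows immediately from~\cite[Theorem~2.1, p.~278]{FouquetMorain2002} or~\cite[Prop.~23, p.~54]{kohel-phd},'' and what you have written is precisely an unpacking of those citations into the volcano picture, together with the Kronecker-symbol bookkeeping for the level-$0$ case. Your added verification that Frobenius is unipotent on $E[2]$ (whence the $1$-or-$3$ dichotomy) is a nice self-contained check not spelled out in the paper.
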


\begin{proof}
This follows immediately from~\cite[Theorem~2.1, p.~278]{FouquetMorain2002} 
or~\cite[Prop.~23, p.~54]{kohel-phd}.
\end{proof}

\begin{lemma}
\label{L:2torsion}
Let $q$ be an odd prime power and let $E\colon y^2 = (x-a)(x-b)(x-c)$ be an
elliptic curve over $\Fq$ with all of its $2$-torsion rational. Let 
$\varphi\colon E\to F$ be the $2$-isogeny with kernel generated by the point 
$(a,0)$ on $E$. Then all of the $2$-torsion of $F$ is rational if and only if
$(a-b)(a-c)$ is a square.
\end{lemma}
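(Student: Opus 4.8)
The plan is to put $\varphi$ into a standard normal form and then read off the splitting field of $F[2]$ from an explicit model of $F$. Translating the coordinate $x\mapsto x+a$, we may assume $a=0$, so that $E\colon y^2 = x(x-\alpha)(x-\beta)$ with $\alpha = b-a$ and $\beta = c-a$, and $\varphi$ is the $2$-isogeny with kernel $\{O,(0,0)\}$. Since $0,\alpha,\beta$ are the three distinct roots of the cubic cutting out $E$, the quantities $\alpha$, $\beta$, and $\alpha-\beta$ are all nonzero. Writing $E$ as $y^2 = x^3 + Ax^2 + Bx$ with $A = -(\alpha+\beta)$ and $B = \alpha\beta$, the classical formula for the $2$-isogeny with kernel $\{O,(0,0)\}$ (equivalently, V\'elu's formulas) shows that $F$ is $\Fq$-isomorphic to the curve
\[
F'\colon Y^2 = X^3 - 2AX^2 + (A^2-4B)X = X\bigl(X^2 + 2(\alpha+\beta)X + (\alpha-\beta)^2\bigr).
\]
Because the isogeny with a prescribed kernel is unique up to post-composition with an $\Fq$-isomorphism, the question of whether $F[2]$ is rational is unchanged by replacing $F$ with $F'$.

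The three points of $F'[2]$ are the point with $X=0$, which is always rational, together with the two points whose $X$-coordinates are the roots of $Q(X) = X^2 + 2(\alpha+\beta)X + (\alpha-\beta)^2$. These two roots lie in $\Fq$ precisely when the discriminant of $Q$ is a square in $\Fq$, and that discriminant is
\[
4(\alpha+\beta)^2 - 4(\alpha-\beta)^2 = 16\alpha\beta.
\]
Since $16\alpha\beta$ is a nonzero element of $\Fq$ (using $\alpha,\beta\neq 0$), it is a square if and only if $\alpha\beta$ is; note also that when $\alpha\beta$ is a square the two roots of $Q$ are automatically distinct from each other and from $0$, so no degeneracy occurs. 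Hence $F[2]\subseteq F(\Fq)$ if and only if $\alpha\beta$ is a square in $\Fq$, and unwinding the substitution gives $\alpha\beta = (b-a)(c-a) = (a-b)(a-c)$, which is the claim.

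There is no serious obstacle here: the argument is essentially a one-line discriminant computation once the $2$-isogeny has been normalized. The only points requiring care are, first, invoking the explicit $2$-isogeny formula with the correct choice of kernel and sign conventions — and if one prefers, one can instead derive the stated model of $F$ directly from V\'elu's formulas — and, second, recording that passing to a different $\Fq$-model of $F$ does not affect whether its $2$-torsion is rational, which is exactly what licenses computing on $F'$.
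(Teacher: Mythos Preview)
Your proof is correct and follows essentially the same path as the paper: translate to $a=0$, write down the standard model of the $2$-isogenous curve $y^2 = x\bigl(x^2 + 2(\alpha+\beta)x + (\alpha-\beta)^2\bigr)$, and observe that full rational $2$-torsion on $F$ is equivalent to the discriminant $16\alpha\beta$ being a square. The only cosmetic difference is that the paper cites \cite[Example~4.5, p.~70]{Silverman:AEC} for this model whereas you invoke V\'elu's formulas, and you add an explicit remark that replacing $F$ by an $\Fq$-isomorphic model does not affect rationality of $F[2]$.
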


\begin{proof}
By shifting $x$-coordinates on $E$ by $a$, we see that it suffices to prove the
statement when $a = 0$. Using~\cite[Example~4.5, p.~70]{Silverman:AEC}, for 
example, we find that one model for $F$ is given by
$y^2 = x^3 + 2(b+c)x^2 + (b-c)^2x$. The $2$-torsion of $F$ is all rational if
and only if the quadratic $x^2 + 2(b+c)x + (b-c)^2$ splits, which happens if and
only if its discriminant $16bc$ is a square.
\end{proof}

\begin{lemma}
\label{L:genus2}
Let $q$ be an odd prime power. Then there is a curve $C/\Fq$ of genus~$2$ with 
exactly two rational Weierstrass points such that
\[
\#C(\Fq) >  \begin{cases}
            1 + q + 4\sqrt{q} -  5 & \text{if $q<512$};\\
            1 + q + 4\sqrt{q} - 32 & \text{if $q>512$}.\\
            \end{cases}
\]
\end{lemma}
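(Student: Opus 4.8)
The plan is to build a genus-$2$ curve $C$ with exactly two rational Weierstrass points as a fiber product of two elliptic curves that share a rational Weierstrass point, i.e.\ by gluing two elliptic curves $E_1, E_2$ along a common $2$-torsion point. Concretely, if $E_1\colon y^2 = (x-a)(x-b_1)(x-c_1)$ and $E_2\colon y^2 = (x-a)(x-b_2)(x-c_2)$ share the rational Weierstrass point $x=a$ but have their other $2$-torsion points non-rational (or at least arranged so the glued curve has exactly two rational Weierstrass points), then the curve $C\colon y^2 = (x-a)(x-b_1)(x-c_1)(x-b_2)(x-c_2)$ is hyperelliptic of genus~$2$, it admits maps to both $E_1$ and $E_2$, and its Jacobian is isogenous to $E_1\times E_2$. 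In particular $\#C(\Fq) = \#E_1(\Fq) + \#E_2(\Fq) - (q+1) = 1 + q + (t_1 + t_2)$ where $t_i = q+1-\#E_i(\Fq)$. So the task reduces to finding two elliptic curves, each with trace close to $-2\sqrt q$ (i.e.\ many points), that can be glued with the required rationality of Weierstrass points.

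The key steps, in order: First, choose $t$ with $t \le -\lfloor 2\sqrt q\rfloor + (\text{small})$ so that $t_1+t_2$ is close to $-4\sqrt q$; one wants an ordinary isogeny class $\Cc_t$ whose $2$-isogeny volcano has positive height, so that one can move around the volcano to adjust the $2$-torsion rationality. The quantitative bounds $4\sqrt q - 5$ versus $4\sqrt q - 32$ come from how close to $-2\sqrt q$ one can push the traces while still having an isogeny class with a volcano of height at least~$1$ and the right $2$-adic behavior; for $q>512$ one has enough room to insist on favorable conditions, whereas for small $q$ one settles for a weaker constant. Second, within such a class use Lemma~\ref{L:volcanoes} and Lemma~\ref{L:2torsion} to locate curves $E_1, E_2$ at suitable levels so that: (a) $E_1$ and $E_2$ both contain a chosen rational $2$-torsion point with a common $x$-coordinate $a$ after a coordinate change, and (b) the remaining $2$-torsion is arranged so that the glued genus-$2$ curve has exactly two rational Weierstrass points — this is controlled precisely by whether $(a-b_i)(a-c_i)$ is a square, via Lemma~\ref{L:2torsion}, since being a square corresponds to all $2$-torsion being rational. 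Third, verify the glued curve is actually a smooth genus-$2$ curve (the five roots $a, b_1, c_1, b_2, c_2$ are distinct — generically true, and one can perturb within the isogeny class if not) and that $\Jac C \sim E_1\times E_2$, giving the point count formula above; then plug in $t_1 + t_2$.

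The main obstacle I expect is the simultaneous satisfaction of (a) and (b): one needs \emph{two} curves in (possibly the same) volcano sharing a rational $2$-torsion point whose $x$-coordinate can be normalized to agree, while the \emph{other} pairs of $2$-torsion $x$-coordinates are forced to be non-rational (or at least not produce more than two rational Weierstrass points on the glued curve — e.g.\ the non-rational $2$-torsion of $E_i$ comes in a Galois-conjugate pair). Matching the shared $x$-coordinate is really a question of applying $\GL_2$-transformations of $\PP^1$ (affine substitutions $x \mapsto \alpha x + \beta$) to line up one root of each cubic; the genuine constraint is then on the \emph{cross-ratios} of the $2$-torsion, and one has to show the volcano structure provides enough curves with the right cross-ratio type. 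Bookkeeping the cases $\Delta_0 \equiv 1, 0, 5 \bmod 8$ from Lemma~\ref{L:volcanoes}, together with the numerical estimate of how small $|t + 2\sqrt q|$ can be made subject to the height-$\ge 1$ requirement, is where the two regimes $q < 512$ and $q > 512$ and the constants $5$, $32$ will emerge; I would handle the $q < 512$ range, if needed, by a direct finite check rather than by the general volcano argument.
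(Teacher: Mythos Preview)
Your high-level strategy---build a genus-$2$ curve with Jacobian isogenous to a product of elliptic curves with many points, and control the rational Weierstrass points via the $2$-isogeny volcano---is the same as the paper's. But your key construction is wrong: the curve $C\colon y^2=(x-a)(x-b_1)(x-c_1)(x-b_2)(x-c_2)$ does \emph{not} admit degree-$2$ maps to $E_1$ and $E_2$, and its Jacobian is \emph{not} isogenous to $E_1\times E_2$ in general. Sharing a root of the Weierstrass polynomial does not produce a morphism between hyperelliptic curves; a generic degree-$5$ (or degree-$6$) hyperelliptic curve has simple Jacobian. (The actual fiber product $E_1\times_{\PP^1}E_2$ has genus~$3$, and its third elliptic quotient is $z^2=(x-b_1)(x-c_1)(x-b_2)(x-c_2)$.) So the point-count formula $\#C(\Fq)=1+q+(t_1+t_2)$ is unfounded, and the argument collapses.

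What the paper does instead is use the gluing of \cite{howe00}: given an isomorphism $\psi\colon E_1[2]\to E_2[2]$ that is not the restriction of an isomorphism $E_1\to E_2$, the quotient $(E_1\times E_2)/\Gamma_\psi$ is the Jacobian of an explicit genus-$2$ curve. The paper takes $E_1=E_2=E$ in a carefully chosen ordinary isogeny class (so $\Jac C\sim E^2$ and $\#C(\Fq)=1+q+2t$), and this requires $E$ to have \emph{all} of its $2$-torsion rational so that $\psi$ is defined over~$\Fq$---the opposite of your setup. The explicit formula for $C$ then lets one read off exactly which Weierstrass points are rational, and the volcano lemmas pin down the square/nonsquare conditions needed to get exactly two. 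The case $q<512$ is indeed handled by a finite computer search, as you anticipated.
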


\begin{proof}
For $q<512$ we find examples of such curves by computer search; a list of 
examples is included with the ancillary files included with the arXiv version
of this paper. Thus we may assume that $q > 512$.

First consider the case where $q\equiv 3\bmod 4$. Let $t$ be the largest integer
such that $t\equiv q + 1\bmod 8$ and $(t,q) = 1$ and $t^2 \le 4q$, so that 
$t>2\sqrt{q} - 16>0$. Set $\Delta \colonequals t^2 - 4q$ and write 
$\Delta = F^2\Delta_0$ for a fundamental discriminant $\Delta_0$. We note that
$\Delta \equiv 4\bmod 32$, so that $F\equiv 2\bmod 4$ and 
$\Delta_0\equiv 1\bmod 8$. If we let $\Cc_{-t}$ be the isogeny class of elliptic
curves of trace $-t$ over~$\Fq$, then $\Cc_{-t}$ has height~$1$.

Let $E$ be an elliptic curve in $\Cc_{-t}$ of level~$0$. By Lemma~\ref{L:volcanoes},
$E$ has two rational $2$-isogenies to elliptic curves of level~$0$ and one
rational $2$-isogeny to an elliptic curve $E'$ of level~$1$. Write $E$ as 
$y^2 = x(x-a)(x-b)$, with coordinates chosen so that the $2$-isogeny from $E$ to
$E'$ has kernel generated by $(0,0)$. Since $E'$ does not have all of its 
$2$-torsion rational, Lemma~\ref{L:2torsion} shows that $ab$ is not a square; on
the other hand, since the other curves $2$-isogenous to $E$ \emph{do} have their
$2$-torsion rational, we find that $a(a-b)$ and $b(b-a)$ are both squares.

Define elements $\alpha_i$ and $\beta_i$ of $\Fq$, for $i=1,2,3$, by
\[
\alpha_1\colonequals 0,\quad
\alpha_2\colonequals a,\quad
\alpha_3\colonequals b,\quad
\beta_1\colonequals a,\quad
\beta_2\colonequals b,\quad
\beta_3\colonequals 0.
\]
Let $\psi\colon E[2]\to E[2]$ be the isomorphism that takes $(\alpha_i,0)$ to 
$(\beta_i,0)$ for each $i$. Since the discriminant of the endomorphism ring of 
$E$ is congruent to $1\bmod 8$, the curve $E$ has no nontrivial automorphisms,
and $\psi$ is not the restriction to $E[2]$ of an automorphism of $E$. 
Therefore, from \cite[Props.~3 and~4, p.~324]{howe00} we obtain a curve $C$ of
genus~$2$ whose Jacobian is isogenous to $E^2$, and the formulas in the cited
results give us a model for $C$. In this case, we find that $C$ is given by 
$y^2 = h$ with
\[
h = a^5 b^5 (a-b)^5 (a^2 - ab + b^2)^3 
     \Bigl(x^2 + \frac{b}{a}\Bigr) 
     \Bigl(x^2 - \frac{a-b}{b}\Bigr) 
     \Bigl(x^2 - \frac{a}{b-a}\Bigr)\,.
\]
Now, since $ab$ and $-1$ are both nonsquare, it follows that $-b/a$ is a square, 
so the first quadratic factor in $h$ splits. On the other hand, since $b(b-a)$ 
is a square, we see that $(a-b)/b$ is \emph{not} a square, so the second
quadratic factor of $h$ is irreducible. Likewise, the third quadratic factor is
irreducible. Therefore, $C$ has exactly two rational Weierstrass points.

Since the Jacobian of $C$ is isogenous to $E^2$, we have 
$\#C(\Fq) = 1 + q + 2t > 1 + q + 4\sqrt{q} - 32.$

Now we turn to the case where $q \equiv 1 \bmod 4$. Let $t$ be the largest
integer such that $t\equiv 2\bmod 4$ and $(t,q) = 1$ and $t^2 \le 4q$, so that
$t>2\sqrt{q} - 8 > 0$. Let $\Cc_{-t}$ be the isogeny class of ordinary elliptic 
curves over $\Fq$ with trace~$-t$, set $\Delta \colonequals t^2 - 4q$, and write
$\Delta = F^2\Delta_0$ for a fundamental discriminant $\Delta_0$. We note that
$\Delta \equiv 0\bmod 16$, so that either the height $h$ of $\Cc_{-t}$ is at 
least~$2$, or $h=1$ and $\Delta_0 \equiv 0\bmod 4$.

Let $E$ be an elliptic curve in $\Cc_{-t}$ of level~$h-1$. Lemma~\ref{L:volcanoes}
shows that if $h\ge 2$, then $E$ has two $2$-isogenies to elliptic curves of
level $h$ and one to an elliptic curve of level~$h-2$. The curves of level $h$
have only one rational point of order~$2$, while the curve of level $h-2$ has 
three rational points of order~$2$. On the other hand, if $h=1$ and 
$\Delta_0\equiv0\bmod 4$ then $E$ has two $2$-isogenies to elliptic curves of
level $1$ and one to an elliptic curve of level~$0$. Again, the curves of level
$1$ have only one rational point of order~$2$, while the curve of level $0$ has
three rational points of order~$2$.

We see that in every case, $E$ can be written in the form $y^2 = x(x-a)(x-b)$
where $ab$ is a square and where $(a-b)/a$ and $(b-a)/b$ are both nonsquares.
Taking $\alpha_i$ and $\beta_i$ as before, we find that the curve $C$ given by
$y^2 = h$, with
\[
h = a^5 b^5 (a-b)^5 (a^2 - ab + b^2)^3 
     \Bigl(x^2 + \frac{b}{a}\Bigr) 
     \Bigl(x^2 - \frac{a-b}{b}\Bigr) 
     \Bigl(x^2 - \frac{a}{b-a}\Bigr)\,,
\]
has Jacobian isogenous to $E^2$. We again find that the first quadratic factor
splits and the other two are irreducible, so once again $C$ has exactly two 
rational Weierstrass points. We also once again have $\#C(\Fq) = 1 + q + 2t.$

Thus, for every odd prime power $q$ we have shown that there is a curve of
genus~$2$ over $\Fq$ with exactly two rational Weierstrass points and with
$\#C(\Fq) > 1 + q + 4\sqrt{q} - 32.$
\end{proof}

\begin{lemma}
\label{L:genus3}
Let $q$ be an odd prime power. Then there is a hyperelliptic curve $C/\Fq$ of
genus~$3$ with exactly two rational Weierstrass points and with
\[
\#C(\Fq) >  \begin{cases}
            1 + q + 4\sqrt{q} -  5 & \text{if $q<512$};\\
            1 + q + 4\sqrt{q} - 32 & \text{if $q>512$}.\\
            \end{cases}
\]
\end{lemma}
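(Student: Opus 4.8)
For $q<512$ I would again use a computer search, recording the resulting curves as ancillary data (as for Lemma~\ref{L:genus2}); so assume $q>512$.

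The plan is to realise $C$ as an unramified double cover of a genus-$2$ curve. Suppose $C_0/\Fq$ is a genus-$2$ curve with an $\Fq$-rational factorisation $y^2=F_1F_2$ of its hyperelliptic model, where $\deg F_1=2$ and $\deg F_2=4$. Then the smooth model $D$ of $u^2=F_1(x)$, $v^2=F_2(x)$ --- with $uv$ recovering the $y$-coordinate of $C_0$ --- is an unramified double cover of $C_0$; one checks that $D$ has genus $3$ and that it is hyperelliptic as soon as the conic $u^2=F_1(x)$ has an $\Fq$-point. Its hyperelliptic involution is $v\mapsto -v$, so the Weierstrass points of $D$ lie over the four roots of $F_2$, with exactly two of $D$'s Weierstrass points lying over each rational root $\rho$ of $F_2$ at which $F_1(\rho)$ is a nonzero square; in particular $D$ has exactly two rational Weierstrass points whenever $F_2$ has a unique rational root $\rho$ and $F_1(\rho)$ is a nonzero square. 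Finally, the $(\Z/2)^2$-action on $D$ has the three intermediate quotients $\PP^1$, $C_0$, and the genus-$1$ curve $E_3\colon v^2=F_2(x)$, so the standard isogeny decomposition for bidouble covers gives $\Jac D\sim\Jac C_0\times E_3$; hence, writing $a(\cdot)$ for the trace of Frobenius, if $\Jac C_0\sim E_1\times E_2$ then $\#D(\Fq)=1+q-a(E_1)-a(E_2)-a(E_3)$.

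I would therefore build $C_0$ by gluing elliptic curves with many points along their $2$-torsion --- using Howe's explicit genus-$2$ gluing formulas, and \emph{not} simply reusing the curve of Lemma~\ref{L:genus2}, whose model is even in $x$, so that each of its unramified double covers has $0$ or $4$ rational Weierstrass points. Concretely: run the $2$-isogeny volcano argument of Lemma~\ref{L:genus2} to produce ordinary elliptic curves over $\Fq$ with full rational $2$-torsion and with Frobenius traces close to $-2\sqrt q$, and glue them so that the resulting (non-even) model $y^2=F_1F_2$ satisfies: $F_2$ has a unique rational root $\rho$ with $F_1(\rho)$ a nonzero square, and $E_3\colon v^2=F_2(x)$ has $a(E_3)\le 0$. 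If this can be arranged with $a(E_1)+a(E_2)<-4\sqrt q+32$ --- which one expects from the volcano argument, since the admissible traces lie within $O(1)$ of $-2\sqrt q$ --- then $D$ is a hyperelliptic genus-$3$ curve with exactly two rational Weierstrass points and
\[
\#D(\Fq)=1+q-a(E_1)-a(E_2)-a(E_3)\ \ge\ 1+q-a(E_1)-a(E_2)\ >\ 1+q+4\sqrt q-32\,.
\]
As in Lemma~\ref{L:genus2}, the square/nonsquare conditions would be verified case by case according to the residue of $q$ modulo a small power of $2$, with the explicit curves listed as ancillary data.

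The step I expect to be the main obstacle is precisely the joint bookkeeping in the previous paragraph: the gluing data (and the chosen factorisation) must make the Prym $E_3$ have at least $q+1$ points --- so that passing to the cover does not cost points --- while simultaneously $F_2$ has a single rational root at which $F_1$ is a square --- so that $D$ has exactly two rational Weierstrass points, as required to feed Lemmas~\ref{L:2g1} and~\ref{L:2g} --- and while the traces $a(E_1)$, $a(E_2)$ remain close to $-2\sqrt q$. Because the volcano construction allows only a bounded amount of freedom, proving that an admissible configuration always exists once $q>512$ is where the real difficulty lies.
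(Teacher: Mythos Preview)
Your framework is the paper's: the genus-$3$ curve is an unramified double cover of a genus-$2$ curve $C_0$ glued from near-extremal elliptic curves, and the $(\Z/2)^2$-decomposition $\Jac D\sim\Jac C_0\times E_3$ is implicitly what drives the point count. Your instinct that the Lemma~\ref{L:genus2} curve cannot be reused directly is also correct. But you are missing the two devices that dissolve the ``joint bookkeeping'' obstacle you flag.

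First, the Prym $E_3$ is never controlled. The paper takes the cover $D\to C_0$ together with its quadratic twist $D'\to C_0$; since $\#D(\Fq)+\#D'(\Fq)=2\,\#C_0(\Fq)$, one of them automatically has $\ge\#C_0(\Fq)$ points --- the same trick as in Lemma~\ref{L:2g1}. The condition $a(E_3)\le 0$ simply disappears from the problem. Second, the Weierstrass condition is arranged to hold for \emph{both} twists simultaneously, so it is satisfied no matter which one wins. Concretely: the paper glues $E$ with itself (or with a $2$-isogenous curve of the same trace, when $q\equiv 1\bmod 4$) using a \emph{different} choice of $\psi$ from Lemma~\ref{L:genus2}, producing a $C_0$ with four rational Weierstrass points rather than two; an explicit M\"obius transformation then brings $C_0$ to the form $y^2=x\,g(x)$ where $g$ has exactly two rational roots, one a square and one a nonsquare, plus an irreducible quadratic factor. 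A direct check then shows that both $D:y^2=g(x^2)$ and $D':y^2=g(nx^2)$ have exactly two rational Weierstrass points. With these two devices in hand, the only remaining work is the volcano computation for the trace, already carried out in Lemma~\ref{L:genus2}.
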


\begin{proof}
The statement for $q<512$ is verified by computer search; a list of examples of
such curves can be found in the ancillary files included with the arXiv version
of this paper. We assume now that $q>512$.

First consider the case where $q\equiv 3\bmod 4$. As in the proof of 
Lemma~\ref{L:genus2}, we let $t$ be the largest integer such that 
$t\equiv q + 1\bmod 8$ and $(t,q) = 1$ and $t^2 \le 4q$, so that 
$t>2\sqrt{q} - 16>0$. We see that $\Delta\colonequals t^2 - 4q$ can be written 
$F^2\Delta_0$ for a fundamental discriminant $\Delta_0$ that is congruent to $1$
modulo $8$ and a conductor $F$ that is congruent to $2$ modulo~$4$. If we let
$\Cc_{-t}$ be the isogeny class of elliptic curves of trace $-t$ over~$\Fq$, then 
$\Cc_{-t}$ has height~$1$.

Let $E$ be an elliptic curve in $\Cc_{-t}$ of level~$0$. Lemma~\ref{L:volcanoes}
shows that $E$ has two rational $2$-isogenies to elliptic curves of level~$0$.
Let $E'$ be one of these curves, and write $E$ as $y^2 = x(x-a)(x-b)$, with
coordinates chosen so that the $2$-isogeny from $E$ to $E'$ has kernel 
generated by $(0,0)$. Since $E'$ is of level $0$ it has all of its $2$-torsion
rational, so Lemma~\ref{L:2torsion} shows that $ab$ is a square; therefore $b/a$
is also a square, say $b/a = c^2$.

Define elements $\alpha_i$ and $\beta_i$ of $\Fq$, for $i=1,2,3$, by
\[
\alpha_1\colonequals 0,\quad
\alpha_2\colonequals a,\quad
\alpha_3\colonequals b,\quad
\beta_1\colonequals 0,\quad
\beta_2\colonequals b,\quad
\beta_3\colonequals a,
\]
and let $\psi\colon E[2]\to E[2]$ be the isomorphism that takes $(\alpha_i,0)$ 
to $(\beta_i,0)$ for each $i$. Since the discriminant of the endomorphism ring
of $E$ is congruent to $1\bmod 8$, the curve $E$ has no nontrivial 
automorphisms, and $\psi$ is not the restriction to $E[2]$ of an automorphism 
of~$E$. Once again we use~\cite[Props.~3 and~4, p.~324]{howe00} to show that
there is a genus-$2$ curve $C$ whose Jacobian is $(2,2)$-isogenous to $E^2$, and
we find that one model for such a $C$ is given by $y^2 = h$ with
\[
h = a^5 b^5 (a-b)^8 (a + b)^3 
     \Bigl(x^2 - c^2\Bigr) 
     \Bigl(x^2 - 1/c^2\Bigr)
     \Bigl(x^2 + 1\Bigr)\,.
\]
Since the Jacobian of $C$ is isogenous to $E^2$, we have 
$\#C(\Fq) = 1 + q + 2t > 1 + q + 4\sqrt{q} - 32.$

Now we replace $x$ with $(2c^2x + 1-c^2)/(2cx + c^3 - c)$ and scale $y$
appropriately to find that $C$ can also be written $y^2 = f$, where
\[
f = -a x (x - 1) \Bigl(x + \frac{(c^2-1)^2}{4c^2}\Bigr)
                \Bigl(x^2 + \frac{(c^2-1)^2}{4c^2}\Bigr)\,.
\]
Note that the quadratic factor is irreducible.

Now, as in the proof of Lemma~\ref{L:2g1}, we consider two double covers of~$C$.
Let $g = f/x$, let $n$ be a nonsquare element of $\Fq$, let $D$ be the curve 
$y^2 = g(x^2)$, and let $D'$ be the curve $y^2 = g(nx^2)$. The curves $D$ and
$D'$ are both double covers of $C$: The map $(x,y) \mapsto (x^2,xy)$ sends $D$
to $C$, and the map $(x,y) \mapsto (nx^2,xy)$ sends $D'$ to $C$. The two covers
are quadratic twists of one another, so we have 
$\#D(\Fq) + \#D'(\Fq) = 2\#C(\Fq)$, so at least one of $D$ and $D'$ has at least
as many points as $C$. Also, the two curves are  both hyperelliptic of 
genus~$3$. Furthermore, the rational Weierstrass point $(1,0)$ of $C$ splits 
into two rational Weierstrass points of $D$ while the rational Weierstrass point
lying over $x = -(c^2 - 1)^2/(4c^2)$ does not, because $-1$ is not a square, 
whereas for $D'$ the splitting behavior of these two points is reversed. Thus,
both $D$ and $D'$ have exact two rational Weierstrass points. 

We have shown that there is a hyperelliptic curve over $\Fq$ of genus~$3$ with 
exactly two rational Weierstrass points and with more than 
$1 + q + 4\sqrt{q} - 32$ rational points.

Now consider the case $q\equiv 1\bmod 4$. If $q\equiv 1\bmod 16$ or
$q\equiv 13\bmod 16$, let $t$ be the largest integer congruent to $2$ or $14$
modulo $16$ that is coprime to $q$ and such that $t^2<4q$; if
$q\equiv 5\bmod 16$ or $q\equiv 9\bmod 16$, let $t$ be the largest integer
congruent to $6$ or $10$ modulo $16$ that is coprime to $q$ and such that 
$t^2<4q$. In both cases we have $t>2\sqrt{q}-16$.

Let $\Delta \colonequals t^2 - 4q$ and write $\Delta = F^2\Delta_0$ for a
fundamental discriminant $\Delta_0$. Our choice of $t$ guarantees that 
$\Delta\equiv 0\bmod 64$, so $F\equiv 0\bmod 4$. If we let $\Cc_{-t}$ be the isogeny
class of elliptic curves of trace $-t$ over~$\Fq$, then the height $h$ of $\Cc_{-t}$ 
is at least~$2$.

Let $E$ be an elliptic curve in $\Cc_{-t}$ of level $h-2$, and let $E'$ be an 
elliptic curve of height $h-1$ that is $2$-isogenous to $E$. Write $E$
as $y^2 = x(x-a)(x-b)$ so that the kernel of the $2$-isogeny to $E'$ contains
the point $(0,0)$. Since $E'$ has all of its $2$-torsion rational,
Lemma~\ref{L:2torsion} shows that $ab$ is a square, so we can write 
$b = ac^2$ for some $c\in\Fq$. Then the fact that the other two curves 
$2$-isogenous to $E$ have all of their $2$-torsion rational implies
that $a^2(1-c^2)$ and $a^2c^2(c^2-1)$ are squares, so $c^2 - 1$ is a square.

We compute that one model for $E'$ is given by
\[
y^2 = x \bigl(x + a(c+1)^2\bigr) \bigl(x+a(c-1)^2\bigr)\,.
\]
Here, the isogeny $E'\to E$ corresponds to the $2$-torsion point $(0,0)$.
The other two $2$-isogenous take $E'$ to curves of level $h$, which each
have exactly one rational point of order~$2$, so Lemma~\ref{L:2torsion} 
tells us that the two values
$4a^2c(c+1)^2$ and $-4a^2c(c-1)^2$ are not squares.
This shows that $c$ is not a square.

Now we use the formulas from~\cite[Prop.~4, p.~324]{howe00} to construct
a curve of genus~$2$ whose Jacobian is $(2,2)$-isogenous to $E\times E'$. In 
particular, we take
\[
\alpha_1\colonequals 0,\quad
\alpha_2\colonequals a,\quad
\alpha_3\colonequals ac^2,\quad
\beta_1\colonequals 0,\quad
\beta_2\colonequals -a(c+1)^2,\quad
\beta_3\colonequals -a(c-1)^2
\]
in~\cite[Prop.~4, p.~324]{howe00} and we find that the resulting curve
$C$ is given by $y^2 = h$, where
\[
h = 64 c^7 (c^2-1)^8 (c^2+1)^3 (c^2+2 c-1)^3 a^{21}
     \Bigl(x^2 - \frac{c^2-1}{4c}\Bigr) 
     \Bigl(x^2 + \frac{1}{(c+1)^2}\Bigr) 
     \Bigl(x^2 + \frac{c^2}{(c-1)^2}\Bigr)\,.
\]
Since the Jacobian of $C$ is isogenous to $E^2$, we have 
$\#C(\Fq) = 1 + q + 2t > 1 + q + 4\sqrt{q} - 32.$

The first of the quadratic factors in the above expression for $h$
is irreducible, because $c^2-1$ is a square
but $c$ is not. The other two quadratic factors split, and if we let $i$
denote a square root of $-1$ in $\Fq$, then the roots of $h$ are
\[
\frac{i}{c+1}\,, \qquad
\frac{-i}{c+1}\,, \qquad
\frac{ic}{c-1}\,, \quad\text{and}\quad
\frac{-ic}{c-1}\,.
\]
If we replace $x$ with 
\[
\frac{
i(c^2 + 2c - 2)x - 2ic(c + 1)
}{
(c + 1)(c^2 + 2c - 1)x - 2(c^2 - 1)
}
\]
and rescale $y$ appropriately, we find that $C$ can also be written
as $y^2 = f$, where
\[
f = a x (x - 1)
\biggl(x - \frac{4 c (c^2 - 1)}{(c^2 + 2c - 1)^2}\biggr)
\biggl(x^2 - \frac{4 (c+1)(c^2 + 1)}{(c^2 + 2c - 1)^2}x  + \frac{4(c+1)^2}{(c^2 + 2c - 1)^2}\biggr)\,.
\]
Note that the quadratic factor is irreducible, and that the root 
$r\colonequals 4 c (c^2 - 1)/(c^2 + 2c - 1)^2$ is not a square, because $c$ is not a square while
$c^2 - 1$ is.

Let $g = f/x$, let $n$ be a nonsquare element of $\Fq$, let $D$ be the
curve $y^2 = g(x^2)$, and let $D'$ be the curve $y^2 = g(nx^2)$. 
As before, the  curves $D$ and $D'$ are both double covers of $C$
and the two covers are quadratic twists of one another, so 
at least one of $D$ and $D'$ has at least as many points as $C$.
The two curves are both hyperelliptic of genus~$3$. And finally,
the rational Weierstrass point $(1,0)$ of $C$ splits into two rational Weierstrass
points of $D$ while the rational Weierstrass point $(r,0)$ does not, 
whereas for $D'$ the splitting behavior of these
two points is reversed. Thus, both $D$ and $D'$ have exact two rational Weierstrass
points. 

We have shown that there is a hyperelliptic curve of genus~$3$ over $\Fq$ with 
exactly two rational Weierstrass points and with
more than $1 + q + 4\sqrt{q} - 32$ rational points.
\end{proof}

With all of these preparatory results at hand, the proof of Theorem~\ref{T:explicit}
for odd $q$ is very short.

\begin{proof}[Proof of Theorem~\textup{\ref{T:explicit}(i)}]
We prove the result by induction on $g$. The statement is true for $g=2$ and $g=3$
by Lemmas~\ref{L:genus2} and~\ref{L:genus3}. Now suppose Theorem~\ref{T:explicit}(i)
holds for all $g$ less than some integer $G>3$. We will show it also holds 
when $g = G$. 

For convenience's sake, let us set $c_q = 5$ if $q<512$ and $c_q=32$ if $q>512$.
Set $h = \lfloor G/2\rfloor$. Then $h>1$, and we may apply 
Theorem~\ref{T:explicit}(i) to show that for every $q$, there is a hyperelliptic
curve $C/\Fq$ of genus~$h$ with exactly two rational Weierstrass points and with
$\#C(\Fq) > 1 + q + 4\sqrt{q} - c_q$.

If $G$ is odd, we apply Lemma~\ref{L:2g1} to the curve $C$ and find that there
is a hyperelliptic curve $D$ of genus $2h+1 = G$ with exactly two rational
Weierstrass points and with $\#D(\Fq)\ge \#C(\Fq)$.

If $G$ is even, we apply Lemma~\ref{L:2g} to the curve $C$ and find that there 
is a hyperelliptic curve $D$ of genus $2h = G$ with exactly two rational
Weierstrass points and with $\#D(\Fq)\ge \#C(\Fq)$.
\end{proof}

\subsection{Characteristic \texorpdfstring{$2$}{2}}
\label{SS:2}

In this section, we prove Theorem~\ref{T:explicit} for finite fields of
characteristic~$2$. The spirit of the proof is very similar to the odd
characteristic case,
but the switch to Artin--Schreier extensions instead of Kummer extensions
requires a few technical modifications. We begin with some basic observations
about hyperelliptic curves in characteristic~$2$.

If $q$ is a power of $2$ and $C$ is a hyperelliptic curve over $\Fq$, then
$C$ has a model of the form $y^2 + y = f$ for a rational function $f\in\Fq(x)$.
Replacing $y$ with $y + u$ for a rational function $u\in\Fq(x)$ turns the
equation for $C$ into $y^2 + y = f + u^2 + u$,  and by modifying $f$ in this way
we can assume that all of the poles of $f$, including the pole at $\infty$, have
odd order. Suppose $f$ has $r$ poles, with orders $d_1,\ldots,d_r$. Then the 
Weierstrass points of $C$ are precisely the points lying over the poles of $f$
in $\PP^1$, and~\cite[Prop.~3.7.8, p.127]{Stichtenoth2009} shows that the genus 
$g$ of $C$ is given by
\[
g = - 1 + \sum_{i=1}^r \frac{d_i + 1}{2}\,.
\]

\begin{lemma}
\label{L:genus}
Let $q$ be a power of $2$ and let $C/\Fq$ be a hyperelliptic curve of genus~$g$,
so that $C$ has a model $y^2 + y = f$ for a rational function $f\in\Fq(x)$
all of whose poles have odd order. Given $a\in\Fq^\times$ and $b\in\Fq$, let 
$h = f((x^2+x+b)/a)$ and let $D$ be the curve defined by $y^2 + y = h$.
Then\textup{:}
\begin{itemize}
\item If $f$ has no pole at infinity, the genus of $D$ is $2g+1$.
\item If $f$ has a pole of order $d>1$ at infinity, the genus of $D$ is $2g$.
\item If $f$ has a simple pole at infinity, write $f = cx + F$ for a 
      constant $c\in\Fq^\times$ and a rational
      function $F$ with no pole at infinity. Then the genus of $D$ is
      $2g$ if $a\ne c$, and is $2g-1$ if $a = c$.
\end{itemize}
\end{lemma}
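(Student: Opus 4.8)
The plan is to read off the genus of $D$ from the pole orders of $h$ on the $x$-line, after bringing the model $y^{2}+y=h$ into the normalized form (all pole orders odd) to which the genus formula quoted above from \cite[Prop.~3.7.8]{Stichtenoth2009} applies. Write $\wp(z)=z^{2}+z$; replacing $y$ by $y+w$ replaces $h$ by $h-\wp(w)$, so the genus of $D$ depends only on the class of $h$ in $\Fq(x)/\wp(\Fq(x))$, and for a normalized model it equals $-1+\tfrac12\sum_{P}(m_{P}+1)$, the sum running over the poles $P$ of $h$ and $m_{P}$ being the (odd) order of the pole at~$P$.

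First I would treat the finite poles. The map $x\mapsto u:=(x^{2}+x+b)/a$ from $\PP^{1}$ to $\PP^{1}$ is the Artin--Schreier map $x\mapsto x^{2}+x$ followed by an affine change of coordinate, so it has ramification index $2$ at $x=\infty$ and is unramified over every finite point of the $u$-line: the fibre over a finite $u_{0}$ is the pair of distinct roots of the separable polynomial $x^{2}+x+(b-au_{0})$. Hence a pole of $f$ of order $d$ at a finite point pulls back to exactly two finite poles of $h$, each again of order $d$; as the pole orders $d_{i}$ of $f$ are odd, these poles of $h$ need no further normalization and account for $2\sum_{i}(d_{i}+1)$ in $\sum_{P}(m_{P}+1)$, the sum being over the finite poles of~$f$. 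If $f$ has no pole at infinity, then neither does $h$ (the polynomial part of $f$ is constant, and a proper fraction in $u$ stays finite at $x=\infty$), so $g_{D}=-1+\sum_{i}(d_{i}+1)=2g+1$ since $g=-1+\tfrac12\sum_{i}(d_{i}+1)$; this is the first bullet.

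It remains to normalize $h$ at $x=\infty$ when $f$ has a pole there, of order $d_{0}$ say. Because $u$ has a pole of order $2$ at $x=\infty$, such a pole becomes a pole of $h$ of order $2d_{0}$, which is even and must be reduced. When $d_{0}>1$ is odd I would write $h=\gamma\,(x^{2}+x+b)^{d_{0}}+(\text{terms of pole order}\le 2d_{0}-2\text{ at }\infty)$ with $\gamma\ne 0$, note that the coefficient of $x^{2d_{0}-1}$ in $(x^{2}+x+b)^{d_{0}}$ equals $d_{0}\equiv 1\pmod 2$ and is therefore nonzero, and subtract $\wp(\sqrt{\gamma}\,x^{d_{0}})$ (a square root exists in $\Fq$ as $q$ is even). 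This leaves a function with unchanged finite poles and a pole of order exactly $2d_{0}-1$ at infinity, so $g_{D}=-1+\tfrac12\bigl(2\sum_{i}(d_{i}+1)+2d_{0}\bigr)=2g$, using $g=-1+\tfrac12\bigl(\sum_{i}(d_{i}+1)+d_{0}+1\bigr)$; this is the second bullet.

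For the third bullet, $d_{0}=1$, write $f=cx+F$ so that $h=(c/a)(x^{2}+x+b)+F(u)$, where $F(u)$ has no pole at infinity. Subtracting $\wp(\lambda x)$ with $\lambda^{2}=c/a$ kills the $x^{2}$-term and leaves $(c/a-\lambda)x+\cdots$, and $c/a-\lambda=0$ precisely when $c/a\in\F_{2}$, i.e.\ when $a=c$. If $a\ne c$ the pole at infinity is simple and $g_{D}=-1+\tfrac12\bigl(2\sum_{i}(d_{i}+1)+2\bigr)=2g$; if $a=c$ the pole at infinity vanishes and $g_{D}=-1+\sum_{i}(d_{i}+1)=2g-1$; in both subcases $g=-1+\tfrac12\bigl(\sum_{i}(d_{i}+1)+2\bigr)$. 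I expect the only real point of care to be the characteristic-$2$ bookkeeping in the normalization at infinity — in particular the nonvanishing of the coefficient of $x^{2d_{0}-1}$, which pins the reduced pole order at exactly $2d_{0}-1$ — everything else being routine use of the genus formula.
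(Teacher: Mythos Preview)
Your proof is correct and follows essentially the same approach as the paper's: both arguments pull back the finite poles via the unramified Artin--Schreier map, then normalize the even-order pole of $h$ at infinity by subtracting $\wp(\sqrt{\gamma}\,x^{d_0})$ and track the coefficient of $x^{2d_0-1}$, using that $d_0$ is odd to ensure it is nonzero. The paper writes out the polar expansion of $h$ slightly more explicitly as $(c_d/a^d)x^{2d}+(d\,c_d/a^d)x^{2d-1}+\cdots$, but the content is identical to your binomial observation, and the $d_0=1$ case is handled the same way in both.
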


\begin{proof}
Each finite pole of $f$, say of order $d$, gives rise to two finite poles of $h$, 
each of order $d$, and if $f$ has no pole at infinity then neither does $h$.
If $f$ has a total of $r$ poles, none of them at infinity,
of orders $d_1,\ldots, d_r$, then $h$ has $2r$ poles,
of orders $d_1, d_1, d_2, d_2, \ldots, d_r, d_r$, and the genus of $D$ is
given by
\[
-1 + 2\sum_{i=1}^r \frac{d_i+1}{2} 
= 1 + 2\Bigl(-1 + 2\sum_{i=1}^r \frac{d_i+1}{2}\Bigr)
= 1 + 2g\,.
\]

Suppose $f$ has a pole at infinity of order $d$. We can write
\begin{equation}
\label{EQ:polar}
f = c_d x^d + c_{d-1} x^{d-1} + \ldots + c_1 x + F,,
\end{equation}
where the $c_i$ are elements in $\Fq$ with $c_d\ne0$ and
$F$ is a rational function with no pole at infinity.
Then the polar decomposition of $h$ at infinity is
\[
h_a = (c_d/a^d) x^{2d} + (d c_d / a^d) x^{2d-1} + (\text{lower degree terms})\,.
\]
If $d>1$ we can replace $y$ with $y + (\sqrt{c_d/a^d})x^d$ to find that at infinity
the curve $D$ looks like 
\[
y^2 + y = (d c_d / a^d) x^{2d-1} + (\text{lower degree terms})\,.
\]
The contribution of the pole of $h$ at infinity to the genus of $D$ is $d$,
while the contribution of the pole of $f$ at infinity to the genus of $C$ is $(d+1)/2$.
Combining this with the contributions of the finite poles, which behave as above,
we find that the genus of $D$ is $2g$.

On the other hand, if $d=1$ then we have
\[
h_a = (c_1/a) x^{2} +  (c_1 / a) x  + (\text{lower degree terms})\,,
\]
and by replacing $y$ with $y + (\sqrt{c_1/a})x$ we find that at infinity
the curve $D_a$ looks like 
\[
y^2 + y = (c_1 / a +  \sqrt{c_1/a}) x+ (\text{lower degree terms})\,.
\]
If $a\ne c_1$ then the coefficient of $x$ is nonzero and the 
contribution of the pole at infinity to the genus is~$1$, and we compute as
above that the genus of $D$ is $2g$. On the other hand, 
if $a = c_1$ then the pole at infinity can be removed, and there is
no contribution to the genus.  In this case we check that that 
genus of $D$ is $2g-1$.
\end{proof}

\begin{lemma}
\label{L:2g1char2}
Let $q$ be power of $2$ and
let $C/\Fq$ be a hyperelliptic curve of genus~$g$ with fewer than $q+1$
rational Weierstrass points. Then there is a hyperelliptic curve $D$ of
genus $2g+1$ that is a double cover of $C$ and that has at least as many 
rational points as does $C$.

If in addition $C$ has at least two rational Weierstrass points,
then $D$ can be chosen to have at least two rational points.
\end{lemma}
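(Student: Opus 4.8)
The plan is to follow the blueprint of Lemma~\ref{L:2g1}, replacing the Kummer substitution $x\mapsto x^2$ by the Artin--Schreier substitution $x\mapsto x^2+x+b$ and using Lemma~\ref{L:genus} to pin down the genus of the cover. Write $\Tr$ for the absolute trace $\Fq\to\F_2$. First I would normalize, writing $C$ as $y^2+y=f$ with $f\in\Fq(x)$ all of whose poles have odd order; by the discussion preceding Lemma~\ref{L:genus} the rational Weierstrass points of $C$ are then exactly the $\Fq$-rational poles of $f$. Since $C$ has fewer than $q+1=\#\PP^1(\Fq)$ rational Weierstrass points, some point of $\PP^1(\Fq)$ is not a pole of $f$, and after applying an $\Fq$-rational automorphism of $\PP^1$ --- which merely permutes the poles of $f$ and preserves their orders --- I may assume that $f$ has no pole at infinity.

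For $b\in\Fq$ let $D_b$ be the hyperelliptic curve $y^2+y=f(x^2+x+b)$; it is geometrically irreducible because $f(x^2+x+b)$ still has an odd-order pole coming from any finite pole of $f$, the map $(x,y)\mapsto(x^2+x+b,\,y)$ realizes it as a degree-$2$ cover of $C$, and since $f$ has no pole at infinity the relevant case of Lemma~\ref{L:genus} shows that $D_b$ has genus $2g+1$. Over the function field of $C$, the cover $D_b\to C$ is cut out by the Artin--Schreier equation $x^2+x=z-b$, where $z$ is the hyperelliptic coordinate on $C$; since a constant of $\Fq$ lies in $\{u^2+u:u\in\Fq(C)\}$ exactly when it has trace $0$, the covers $D_b\to C$ and $D_{b'}\to C$ are quadratic twists of one another whenever $\Tr(b)\neq\Tr(b')$. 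As in the proof of Lemma~\ref{L:2g1}, over every point of $C(\Fq)$ a double cover and its quadratic twist contribute exactly two points in total, so
\[
\#D_b(\Fq)+\#D_{b'}(\Fq)=2\,\#C(\Fq)
\]
whenever $\Tr(b)\neq\Tr(b')$; hence one of the two curves, which we take to be $D$, has $\#D(\Fq)\ge\#C(\Fq)$. Finally, if $C$ has at least two rational Weierstrass points then $\#C(\Fq)\ge2$, so $\#D(\Fq)\ge 2$, which gives the last assertion.

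I expect the two delicate points to be the normalization and the twist identification. For the first, one must verify that moving a rational non-pole of $f$ to infinity by an $\Fq$-rational M\"obius map keeps every pole of odd order and genuinely removes the pole at infinity --- so that Lemma~\ref{L:genus} applies in the case yielding genus $2g+1$ --- and it is exactly the hypothesis on the number of rational Weierstrass points that produces the non-pole to move. For the second, one checks that for the two possible traces of $b$ the covers $x^2+x=z-b$ of $\PP^1$ are the two Artin--Schreier double covers that ramify only (and minimally) over $z=\infty$, that these differ by a quadratic twist, and that pulling back along $C\to\PP^1$ preserves this. Should a reader prefer to avoid twist language, the same identity can be obtained by a direct fibrewise count of $\#D_b(\Fq)$ over the $z$-line: the only ramified fibre of $x\mapsto x^2+x+b$ lies over $z=\infty$, which we arranged to be a non-pole of $f$, so that fibre contributes equally to $D_b$ and to $C$, and the remaining $b$-dependence collapses into the single sign $(-1)^{\Tr(b)}$. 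The bookkeeping in that alternative is routine but is the main nuisance.
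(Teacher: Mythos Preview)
Your argument for the main assertion is correct and is essentially identical to the paper's: normalize so that $f$ has no pole at~$\infty$, form the two Artin--Schreier covers $y^2+y=f(x^2+x)$ and $y^2+y=f(x^2+x+n)$ with $\Tr(n)=1$, invoke Lemma~\ref{L:genus} for the genus, and use the quadratic-twist relation to get $\#D+\#D'=2\,\#C$.

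For the final sentence you take a shortcut the paper does not: you simply observe that $\#D(\Fq)\ge\#C(\Fq)\ge 2$. This is valid for the lemma \emph{as literally stated} (``at least two rational points''). The paper, however, does more work here: it places two rational Weierstrass points of $C$ over $x=0$ and $x=n$, so that each of $D$ and $D'$ inherits two rational \emph{Weierstrass} points. That stronger conclusion is what is actually invoked in the inductive step of Theorem~\ref{T:explicit}\eqref{T:E2}, where Lemma~\ref{L:2gchar2} requires the input curve to have at least two rational Weierstrass points. So your proof is fine for the stated lemma, but if you intend to plug it into the induction you should upgrade the last paragraph to the paper's placement-of-Weierstrass-points argument rather than the bare cardinality bound.
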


\begin{remark}
\label{R:2g1char2}
If $C/\Fq$ is a hyperelliptic curve with $\#C(\Fq) > q + 1$ then
$C$ has fewer than $q+1$ rational Weierstrass points.
\end{remark}

\begin{proof}
Let $\varphi$ be the canonical map from $C$ to $\PP^1$. There is at least one point
of $\PP^1$ that does not ramify in $\varphi$, and we can pick one such point and
choose a coordinate function $x$ on $\PP^1$ so that this point lies at~$\infty$.
That means that $C$ has a hyperelliptic model of the form $y^2 + y = f$, where
$f\in\Fq(x)$ is a rational function, all of whose poles have odd order, and with
no pole at~$\infty$. 

Let $n$ be an element of $\Fq$ whose absolute trace --- that is, its trace 
to~$\F_2$ --- is equal to~$1$. Consider the two hyperelliptic curves
$D\colon y^2 + y = f(x^2 + x)$ and $D'\colon y^2 + y = f(x^2 + x + n)$. 
By Lemma~\ref{L:genus}, both $D$ and $D'$ have genus~$2g+1$.
Also, the obvious double covers $D\to C$ and $D'\to C$ are quadratic twists of one another,
and it follows that $\#D(\Fq) + \#D'(\Fq) = 2\#C(\Fq)$. Therefore,
one of these two curves has at least as many rational points as does $C$.

Suppose $C$ has at least two rational Weierstrass points. We can choose our
coordinate function $x$ so that one of these points lies over 
the point $x=0$ of $\PP^1$, which the other lies over the point $x=n$ of~$\PP^1$,
where $n$ is an element of $\Fq$ of absolute trace~$1$. The Weierstrass point
lying over $x=0$ splits into two rational Weierstrass points of~$D$, and the
Weierstrass point lying over $x=n$ splits into two rational Weierstrass points of~$D'$.
Therefore, no matter which of the two curves has at least as many rational
points as~$C$, it has at least two rational Weierstrass points.
\end{proof}

\begin{lemma}
\label{L:2gchar2}
Let $q>2$ be a power of $2$ and let $C/\Fq$ be a hyperelliptic curve of
genus~$g$ that has at least two rational Weierstrass points and
with $\#C(\Fq) > q$.
Then there is a hyperelliptic curve $D/\Fq$ of genus $2g$
that is a double cover of $C$, that has at least two rational Weierstrass points,
and that satisfies
\begin{alignat*}{2}
\#D(\Fq) &\ge \#C(\Fq)&\qquad&\text{if $\#C(\Fq) < 2q$}\textup{;}\\
\#D(\Fq) &= 2q-1      &      &\text{if $\#C(\Fq) = 2q$.}
\end{alignat*}
\end{lemma}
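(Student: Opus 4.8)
The plan is to mimic the proof of Lemma~\ref{L:2g}, replacing the Kummer cover $x\mapsto x^2-a^2$ of $\PP^1$ by the Artin--Schreier-type degree-$2$ cover $x\mapsto(x^2+x+b)/a$ and the quadratic character $\chi$ by the additive character $\psi\colon\Fq\to\{\pm1\}$, $\psi(w)=(-1)^{\Tr_{\Fq/\F_2}(w)}$, so that $y^2+y=w$ has $1+\psi(w)$ solutions $y\in\Fq$. Since $C$ has at least two rational Weierstrass points I choose a coordinate $x$ on $\PP^1$ so that two of them lie over $x=\infty$ and $x=0$, and write $C$ as $y^2+y=f$ with every pole of $f$ of odd order; thus $f$ has a pole at $\infty$, of some odd order $d\ge1$, and a pole at $x=0$. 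Fix once and for all a $b\in\Fq$ with $\Tr(b)=0$, and for $a\in\Fq^\times$ set $h_a=f((x^2+x+b)/a)$ and $D_a\colon y^2+y=h_a$, with its natural degree-$2$ map $D_a\to C$. By Lemma~\ref{L:genus}, $D_a$ has genus $2g$ provided that, when $d=1$ (writing $f=cx+F$), we also require $a\ne c$; call such $a$ \emph{admissible}. Because $x^2+x+b=0$ has two rational roots, the pole of $f$ at $x=0$ gives two rational Weierstrass points of $D_a$ and the pole at $\infty$ gives one more, so $D_a$ always has at least two rational Weierstrass points.

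I would then count points by pushing a point of $D_a$ down to $z=(x^2+x+b)/a$. Over a finite $z$ that is not a pole of $f$, the fibre in $\PP^1_x$ has $1+\psi(az+b)$ rational points, and over each of them sit $1+\psi(f(z))$ points of $D_a$; over a rational pole of $f$ the count is $1+\psi(az+b)$, and over $z=\infty$ it is $1$. Using $\psi(b)=1$, the identity $\sum_{a\in\Fq}\psi(az)=0$ for $z\ne0$, and $\#C(\Fq)=1+q+\sum_{z\in\Fq,\,z\text{ not a pole of }f}\psi(f(z))$, exactly the bookkeeping of Lemma~\ref{L:2g} collapses to
\[
\#D_a(\Fq)=\#C(\Fq)+N_a,\qquad N_a\colonequals\sum_{\substack{z\in\Fq\\ z\text{ not a pole of }f}}\psi(az)\,\psi(f(z)).
\]
Summing over $a\in\Fq^\times$, and noting that $z=0$ does not occur in the inner sum since $x=0$ is a pole of $f$, gives the key identity $\sum_{a\in\Fq^\times}N_a=1+q-\#C(\Fq)$.

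The conclusion then comes from averaging. Note first $\#C(\Fq)\le2q$, since the two rational Weierstrass points of $C$ each carry a single point above them. If $\#C(\Fq)<2q$ then $\sum_{a\in\Fq^\times}N_a>-(q-1)$, so some $N_a\ge0$; if $\#C(\Fq)=2q$ then $\sum_{z\text{ not a pole}}\psi(f(z))=q-1$ forces $x=0$ to be the only finite rational pole of $f$ and $\psi(f(z))=1$ for every $z\in\Fq^\times$, whence $N_a=\sum_{z\in\Fq^\times}\psi(az)=-1$ and $\#D_a(\Fq)=2q-1$ for \emph{every} $a\in\Fq^\times$. When $d\ge3$ every $a\in\Fq^\times$ is admissible and this finishes the proof. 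The main obstacle is the simple-pole case $d=1$: there the single value $a=c$ is forbidden, so after averaging one must still exhibit an \emph{admissible} $a$ with $N_a\ge0$ (resp.\ $N_a=-1$), and the bare averaging bound does not suffice. I would handle this by controlling the excluded term $N_c$ --- using the point-count identity for the genus-$(2g-1)$ curve $D_c$ --- and by analysing the $N_a$ more closely via the discrete Fourier transform of the function $g$ on $\Fq$ given by $g(z)=\psi(f(z))$ for $z$ not a pole of $f$ and $g(z)=0$ otherwise; this is the step where the hypotheses $q>2$ and $\#C(\Fq)>q$ really enter.
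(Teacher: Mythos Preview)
Your setup and the case $d\ge 3$ (and the case $\#C(\Fq)=2q$) are correct and match the spirit of the paper's argument. The gap is exactly where you flag it: the simple-pole case $d=1$ with $\#C(\Fq)<2q$. There your averaging identity $\sum_{a\in\Fq^\times}N_a=1+q-\#C(\Fq)$ together with the best bound on the excluded term, $N_c\le 2q-1-\#C(\Fq)$ (which is what the point-count on $D_c$ actually gives, since $D_c$ has only the two rational finite Weierstrass points coming from $z=0$), yields
\[
\sum_{a\ne 0,c}N_a \ \ge\ (1+q-\#C(\Fq))-(2q-1-\#C(\Fq)) \ =\ 2-q \ =\ -(q-2),
\]
which is exactly the value attained if every admissible $N_a=-1$. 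So the inequality is not strict and there is no contradiction. Pushing your Fourier idea through, equality forces all admissible $N_a=-1$ and then inversion gives $qg(z)=(\#C-q)+(2q-\#C)\psi(cz)$ for $z\ne 0$; since $g(z)\in\{-1,0,1\}$ this forces $\#C(\Fq)=3q/2$ and $P=\{0\}\cup\{z:\Tr(cz)=1\}$. That residual configuration is not obviously impossible, and in it $C$ has $q/2+2$ rational Weierstrass points, so you would still need a separate argument to finish (and you have not given one). Your proposal therefore has a genuine hole.

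The paper closes this hole by two devices you did not use. First, it treats the case of at least three rational Weierstrass points separately, by a pure twist argument with the two covers $y^2+y=f(x^2+x)$ and $y^2+y=f(x^2+x+n)$ (with $\Tr(n)=1$), so no averaging is needed there. Second, in the case of exactly two rational Weierstrass points, it parametrizes the scaling \emph{quadratically} in $a$, taking $h_a=f\bigl((x^2+x)/(a^2+a+n)\bigr)$, and crucially normalizes coordinates so that $\psi(f(1))=-1$ (possible precisely because $\#C(\Fq)<2q$). Then $\sum_{a\in\Fq}\psi((a^2+a+n)z)$ equals $q$ at $z=0$, $-q$ at $z=1$, and $0$ elsewhere, so $\sum_{a\in\Fq}N_a=q$. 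This much larger sum is what makes the averaging survive the exclusion of the two bad parameters: if all other $N_a\le -1$ one gets $q\le 2(2q-1-\#C)-(q-2)=3q-2\#C$, hence $\#C\le q$, contradicting the hypothesis $\#C>q$. In short, the normalization $\psi(f(1))=-1$ combined with the quadratic-in-$a$ substitution is the missing idea; your linear-in-$a$ family cannot produce the needed slack.
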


\begin{proof}
Let $\varphi\colon C\to\PP^1$ be the canonical double cover.
Suppose $C$ has at least three rational Weierstrass points. We can choose
a coordinate function $x$ on $\PP^1$ so that these three points lie over
$0$, $n$, and~$\infty$, where $n$ is an element of $\Fq$ of absolute trace~$1$.
If $f$ has a simple pole at infinity and if the coefficient $c_1$ of $x$
in the polar expansion~\eqref{EQ:polar} is equal to $1$, we can choose 
another element $n'$ of $\Fq$ of absolute trace $1$ and then scale $x$ by
a factor of $n'/n$; this has the effect of replacing $n$ with $n'$ and of
modifying the coefficient $c_1$ so that it is no longer equal to~$1$.
Thus, we may assume that $c_1\ne 1$.

Consider the two curves $D\colon y^2 + y = f(x^2 + x)$ and
$D'\colon y^2 + y = h(x^2 + x + n)$. We see from Lemma~\ref{L:genus} that
both $D$ and $D'$ have genus $2g$, and one of them has at least as many
rational points
as does~$C$. Furthermore, the Weierstrass point of $C$ that lies above $x=0$
splits into two rational Weierstrass points on~$D$, while the Weierstrass
point of $C$ lying above $(n,0)$ splits into two rational Weierstrass points on~$D'$.
Thus, at least one of $D$ and $D'$ will satisfy the conclusion of the lemma. 

We can now turn to the case where $C$ has exactly two rational Weierstrass points.
This time, we can  choose a coordinate function $x$ on $\PP^1$ so that 
$C$ has a hyperelliptic model of the form $y^2 + y = f$, where 
$f\in \Fq(x)$ is a rational function that has odd-order poles at $0$ and~$\infty$
and no other rational poles. If $\#C(\Fq) < 2q$, then there must be a rational point
of $\PP^1$ that does not have a rational point of $C$ lying over it; in this case,
we scale $x$ so that this point becomes the point $x=1$ of $\PP^1$. In particular,
this means that $f(1)$ has absolute trace~$1$. On the other hand, if $\#C(\Fq) = 2q$,
then every point of $\PP^1(\Fq)$ other than $0$ and $\infty$ splits into
two rational points of $C$, so $f(a)$ has absolute trace $0$ for every $a\in\Fq^\times.$

Let $n$ be an element of $\Fq$ whose absolute trace is~$1$. For every $a\in\Fq$,
set $h_a\colonequals f((x^2+x)/(a^2+a+n))$ and let $D_a$ be the curve
$y^2 + y = h_a$. If the polynomial $x^2 + x + n = c_1$ (with $c_1$ defined in \eqref{EQ:polar})  has roots in~$\Fq$,
we let $s_0$ and $s_1$ be those roots; otherwise, we
take $s_1 = 0$ and $s_2 = 1$. We see from Lemma~\ref{L:genus} that
$D_a$ has genus $2g$ for all $a\in\Fq$ different from $s_1$ and~$s_2$.

Let us compute the number of rational points on $D_a$.
Let $\chi\colon\PP^1(\Fq)\to \{-1,0,1\}$
be the function that takes $\infty$ to $0$ and that takes an element $z\in\Fq$ to
$1$ or $-1$ depending on whether the absolute trace of $z$ is $0$ or $1$. We see,
for example, that
\[
\#C(\Fq) = 1 + \sum_{z\in\Fq}(1 + \chi(f(z))) = 1 + q + \sum_{z\in\Fq} \chi(f(z))\,,
\]
just as we had in odd characteristic. Let $n_a$ be the number of rational points
on $D_a$ at infinity, so that $n_a = 1$ if $D_a$ has a Weierstrass point
at infinity and $n_a$ is either $0$ or $2$ otherwise.
Arguing as in the odd-characteristic case,  we find that
\begin{align*}
\#D_a(\Fq) 
&= n_a + \sum_{z\in\Fq}(1 + \chi((a^2+a+n)z))(1 + \chi(f(z)))\\
&= n_a - 1 + \#C(\Fq) + \sum_{z\in\Fq}\chi((a^2+a+n)z) +  \sum_{z\in\Fq} \chi((a^2+a+n)z)\chi(f(z))\\
&= n_a - 1 + \#C(\Fq) + \sum_{z\in\Fq} \chi((a^2+a+n)z)\chi(f(z))\,.
\end{align*}
Define
\[
N_a \colonequals \sum_{z\in\Fq} \chi((a^2+a+n)z)\chi(f(z))
\]
so that
\begin{equation}
\label{EQ:Narel}
\#D_a(\Fq) - n_a = \#C(\Fq) - 1 + N_a\,.
\end{equation}

Suppose $\#C(\Fq) = 2q$, so that $\chi(f(z)) = 1$ for all $z\in\Fq^\times$.
Then for every $a\in\Fq\setminus\{s_0,s_1\}$ we have
\[
N_a = \sum_{z\in\Fq} \chi((a^2+a+n)z)\chi(f(z)) = \sum_{z\in\Fq^\times} \chi((a^2+a+n)z) = -1\,,
\]
and for such $a$ we also know that $n_a=1$. Thus, for every such $a$ the curve
$D_a$ has $2q-1$ points, and so satisfies the conditions listed in the lemma.

We are left with the case where $\#C(\Fq)<2q$.  We will show that in this case
there is an $a\in\Fq$, not equal to $s_1$ or $s_2$, such that $N_a\ge 0$. 
Recall that in this case, we normalized $f$ so that $\chi(f(1)) = -1$.

We compute:
\begin{align*}
\sum_{a\in\Fq} N_{a} 
&= \sum_{a\in\Fq} \sum_{z\in\Fq} \chi((a^2 + a + n)z)\chi(f(z))\\
&= \sum_{z\in\Fq} \chi(f(z)) \sum_{a\in\Fq} \chi((a^2+a+n)z)\,.
\end{align*}
When $z = 0$, the interior sum is equal to $q$. 
When $z = 1$, the interior sum is equal to $-q$. 
When $z\in\Fq\setminus\F_2$, we calculate the interior sum as follows.
Let $X_z$ be the genus-$0$ curve $y^2 + y = z(x^2 + x + n)$.
Then 
\[
1 + q = \#X_z(\Fq) = 1 + \sum_{a \in\Fq} (1 + \chi((a^2+a+n)z)) 
    = 1 + q + \sum_{a \in\Fq} \chi((a^2+a+n)z)\,,
\]
so 
\[
\sum_{a\in\Fq} \chi((a^2+a+n)z) = 0\,.
\]
Therefore, 
\[
\sum_{a\in\Fq} N_a = q\chi(f(0)) - q \chi(f(1)) = q\cdot 0 - q\cdot(-1) = q\,.
\]
Now, from~\eqref{EQ:Narel} we find that $N_a = (\#D_a(\Fq)-n_a) - (\#C(\Fq)-1)$,
so that $N_a$ is the difference between the number of rational 
points of $D_a$ not lying over $\infty$ and the number of rational points
of $C$ not lying over~$\infty$. Since $D_a$ has two rational Weierstrass points
not lying over $\infty$ we have $\#D_a(\Fq)-n_a\le 2q-2$, so 
$N_a\le 2q-1-\#C(\Fq)$. Suppose, to obtain a contradiction, that for every
$a\in\Fq$ other than $s_1$ and $s_2$ we had $\#D_a(\Fq) < \#C(\Fq)$, so that 
$N_a\le -1$. Then we would have
\[
q = \sum_{a\in\Fq} N_a 
= N_{s_1} + N_{s_2} + \sum_{a\in\Fq\setminus\{s_1,s_2\}} N_a
\le 4q - 2 - 2\#C(\Fq) - (q-2) 
= 3q - 2\#C(\Fq)
\]
so that $\#C(\Fq)\le q$. This contradicts the hypothesis of the lemma.
Therefore, there is at least one value of $a$
for which $D_a$ has genus $2g$ and $\#D_a(\Fq)\ge\#C(\Fq)$. We have already
seen that every such $D_a$ has three rational Weierstrass points, so we are 
done.
\end{proof}

Lemmas~\ref{L:2g1char2} and~\ref{L:2gchar2} give us the machinery with which
to build an induction as in the previous subsection, and once again we are
left with the task of producing curves of genus~$2$ and~$3$ with many points
to use as base cases. Such curves are provided by the following lemma.

\begin{lemma}
\label{L:genus2and3char2}
Let $q$ be a power of $2$ with $q>8$. Then for $g=2$ and $g=3$ there is a 
hyperelliptic curve $C/\Fq$ of genus~$g$ with at least two rational Weierstrass
points and with $\#C(\Fq) > 1 + q + 4\sqrt{q} - 12$.
\end{lemma}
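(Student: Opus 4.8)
The plan is to follow the template of Lemmas~\ref{L:genus2} and~\ref{L:genus3}: dispose of the finitely many $q$ up to some explicit bound by a computer search whose output we record in the ancillary files, and for the remaining large $q$ give an explicit construction whose point count is controlled by ordinary elliptic curves of near-extremal trace. The new difficulty in characteristic~$2$ is that a hyperelliptic curve is an Artin--Schreier cover $y^2+y=f(x)$ of $\PP^1$, so the device of~\cite{howe00} (gluing two copies of an elliptic curve along their $2$-torsion) is not available. Instead I would build the base curves as Artin--Schreier covers of $\PP^1$ that carry an \emph{additional} degree-$2$ quotient map, so that $\Jac C$ becomes isogenous to a product of elliptic curves each of which can be written down and counted.

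For genus~$2$, the natural choice is $C\colon y^2+y=a\bigl(\tfrac1x+\tfrac1{x+1}\bigr)+bx$ with $a\in\Fq^{\times}$ and $b=c^2+c$ for some $c\in\Fq\setminus\F_2$. Since $\tfrac1x+\tfrac1{x+1}=\tfrac1{x^2+x}$ in characteristic~$2$, the function $f$ has exactly three simple poles, all $\Fq$-rational (at $0$, $1$, $\infty$), so $C$ has genus~$2$ and \emph{three} rational Weierstrass points, and $(x,y)\mapsto(x+1,\,y+c)$ is an $\Fq$-automorphism of order~$2$. Together with the hyperelliptic involution this produces a Klein four-group acting on $C$, whence $\Jac C$ is isogenous to the product $E_1\times E_2$ of the two elliptic quotients. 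A direct computation (substitute $u=x^2+x$, $w=y+cx$) identifies $E_1$ and $E_2$ with curves of the shape $y^2+xy=x^3+\gamma^2 x$, and as $a$ and $c$ vary the corresponding pair of parameters ranges over all pairs of distinct nonzero elements of $\Fq$; note that curves of this shape realize exactly one member of each ordinary quadratic-twist pair. Since $\#C(\Fq)=1+q+t_1+t_2$ with $t_i=\#E_i(\Fq)-q-1$, it is enough to pick the parameters so that $t_1+t_2>4\sqrt q-12$.

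For genus~$3$, I would instead take $C\colon y^2+y=a\bigl(\tfrac1{x^2+x}+\tfrac1{x^2+x+q_0}\bigr)$ with $q_0=p^2+p$ for a suitable $p\in\Fq\setminus\F_2$. Here $f$ is a function of $u=x^2+x$ with four simple $\Fq$-rational poles (at $0$, $1$, $p$, $p+1$) and no pole at infinity, so $C$ has genus~$3$ and four rational Weierstrass points, and $C$ is a double cover of the elliptic curve $E_1\colon y^2+y=a(\tfrac1u+\tfrac1{u+q_0})$ --- this is the $g\mapsto 2g+1$ construction of Lemma~\ref{L:genus} with $g=1$. The point is that the choice of $f$ forces a further splitting: the commuting automorphism $(x,y)\mapsto(x+1,\,y+1)$ makes the two-dimensional Prym of $C\to E_1$ itself decompose, so that $\Jac C\sim E_1\times E_2\times E_3$ with all three $E_i$ again of the special shape above, their parameters now being (essentially) $s^2$, $ps$ and $(p+1)s$ for a free $s\in\Fq^{\times}$. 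As before $\#C(\Fq)=1+q+t_1+t_2+t_3$, so it suffices to arrange $t_1+t_2+t_3>4\sqrt q-12$.

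The main obstacle is precisely this last arithmetic step: showing that, for every large $q$, the free parameters can be chosen so that the two (resp.\ three) elliptic curves occurring in the product have trace sum larger than $4\sqrt q-12$. This is the characteristic-$2$ counterpart of the $2$-isogeny-volcano argument used in Lemmas~\ref{L:genus2} and~\ref{L:genus3}: an ordinary elliptic curve in characteristic~$2$ has exactly one rational point of order~$2$, so the $2$-isogeny graph is again a volcano, and I would combine Waterhouse's existence theorem~\cite{waterhouse} with control of the level of the curves we use to force an ordinary isogeny class of trace as close to $-2\sqrt q$ as the relevant congruence and volcano conditions permit, and then check that the curve of the required level in that class, \emph{together with the companion curves imposed by the construction}, all have enough rational points. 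The slack of $12$ in the statement is what absorbs the distance between $2\sqrt q$ and the largest trace we can actually realize under these constraints; the finitely many small $q>8$ for which this asymptotic estimate is too weak are handled by the computer search. With Lemma~\ref{L:genus2and3char2} available, the induction built on Lemmas~\ref{L:2g1char2} and~\ref{L:2gchar2} upgrades these genus-$2$ and genus-$3$ base cases to every $g\ge 2$, completing the proof of Theorem~\ref{T:explicit}(ii) exactly as in odd characteristic.
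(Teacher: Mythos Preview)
Your genus-$2$ construction is essentially the paper's: the family $y^2+y=a\bigl(\tfrac1x+\tfrac1{x+1}\bigr)+bx$ is exactly the paper's equation~\eqref{EQ:genus2char2} up to relabelling, and your observation that the two elliptic factors can be chosen independently is what makes the arithmetic go through. In fact no computer search is needed for any $q>8$: the paper picks the two traces $t_0,t_4$ to be the largest integers with $1+q+t_0\equiv0$ and $1+q+t_4\equiv4\pmod 8$, which forces $t_0+t_4>4\sqrt q-12$ directly and guarantees the two parameters $a_0,a_4$ are distinct (one has absolute trace~$0$, the other~$1$). Your volcano sketch for this step is more elaborate than necessary.

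Your genus-$3$ plan, however, has a real gap. First a minor error: the involution $(x,y)\mapsto(x+1,y+1)$ is just $\iota\sigma_1$; it acts as $+1$ on the Prym of $C\to E_1$ and does nothing to split it. The extra automorphism you need is $(x,y)\mapsto(x+p,y)$, which does give $\Jac C\sim E_1\times E_2\times E_3$ via Kani--Rosen. But the substantive problem is that the three factors are governed by only two parameters $(a,p)$: their normalized parameters are $a/q_0$, $ap/(p{+}1)$, $a(p{+}1)/p$ (not the $s^2,ps,(p{+}1)s$ you wrote), and you give no argument that some choice forces $t_1+t_2+t_3>4\sqrt q-12$. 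Picking one factor to have trace near $2\sqrt q$ says nothing about the other two, and your Waterhouse/volcano sketch does not address this dependence.

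The paper sidesteps all of this: once the genus-$2$ curve $C$ with $\#C(\Fq)>1+q+4\sqrt q-12$ is in hand, it simply adjoins a root of $u^2+u=(a_0+a_4)/x$ to the function field of~$C$. One of the resulting genus-$3$ double cover and its quadratic twist has at least $\#C(\Fq)$ points, and each visibly has two rational Weierstrass points. This is the same mechanism as Lemma~\ref{L:2g1char2} and requires no further arithmetic input.
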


\begin{proof}
We observe that an ordinary elliptic curve over $\Fq$ can be written in the
form $y^2 + y = x + a/x$ if and only if it has a rational point of
order $4$, one such point being $(\sqrt{a},0)$. Furthermore, such a curve
has a rational point of order $8$ if and only if $a$ has absolute trace~$0$; 
in this case, if we write $a = b^2 + b$, then an $8$-torsion point is
given by $((a^4 + a^3)^{1/4}, b^{1/4})$.

Let $t_0$ and $t_4$ be the largest integers less than or equal to $2\sqrt{q}$ 
such that $1 + q + t_0\equiv 0\bmod 8$ and $1 + q + t_4\equiv 4\bmod 8$, so that 
$t_i>2\sqrt{q} - 8$ for both values of $i$ and $t_i>2\sqrt{q} - 4$ for one value
of~$i$. For $i=0$ and $i=4$, let $E_i$ be an ordinary elliptic curve over $\Fq$
with trace~$t_i$. Then we can write the $E_i$ in the form $y^2 + y = x + a_i/x$,
where $a_0$ has absolute trace~$0$ and $a_4$ has absolute trace~$1$.

Let $C$ be the hyperelliptic curve
\begin{equation}
\label{EQ:genus2char2}
y^2 + y = \frac{\sqrt{a_0a_4}}{a_0+a_4} x + \frac{a_0+a_4}{x} + \frac{a_0+a_4}{x+1}\,.
\end{equation}
Clearly $C$ has three rational Weierstrass points. Also, if we set 
$z = y + \sqrt{a_i/(a_0+a_4)} x$, then
\[
z^2 + z = \frac{a_i}{a_0+a_4} (x^2 + x) + \frac{a_0+a_4}{x^2 + x}\,,
\]
and the quotient of this curve by the involution that sends $(x,z)$ to $(x+1,z)$
is clearly 
\[
z^2 + z = \frac{a_i}{a_0+a_4} w + \frac{a_0+a_4}{w}\,,
\]
which we check is isomorphic to $E_i$. Therefore $\Jac C$ is isogenous to 
$E_0\times E_4$, so $\#C(\Fq) = 1 + q + t_0 + t_4 > 1 + q + 4\sqrt{q} - 12.$

Now let $D$ be the double cover of $C$ obtained by adjoining a root of
\begin{equation}
\label{EQ:genus2cover}
u^2 + u = \frac{a_0+a_4}{x}
\end{equation}
to the function field of~$C$. Combining equations~\eqref{EQ:genus2char2} 
and~\eqref{EQ:genus2cover} and writing $v = y + u$, we find that $D$ can be 
written in the form
\begin{equation}
\label{EQ:genus3char2}
v^2 + v = \frac{\sqrt{a_0 a_4}}{u^2 + u} 
          + \frac{a_0^2 + a_4^2}{u^2 + u + a_0 + a_4} + a_0 + a_4\,.
\end{equation}
Because the absolute trace of $a_0 + a_4$ is~$1$, the quadratic twist $D'\to C$
of the double cover $D\to C$ can be given by
\begin{equation}
\label{EQ:genus3char2bis}
v^2 + v = \frac{\sqrt{a_0 a_4}}{u^2 + u + a_0 + a_4} 
         + \frac{a_0^2 + a_4^2}{u^2 + u} + a_0 + a_4\,.
\end{equation}
One of the two curves $D$ and $D'$ will have at least as many points as~$C$, and
$D$ and $D'$ each have exactly two rational Weierstrass points: On both curves, 
the points with $u=0$ and $u=1$ are Weierstrass points. Thus, one of $D$ and
$D'$ will be a hyperelliptic curve of genus~$3$ with the desired properties.
\end{proof}

\begin{proof}[Proof of Theorem~\textup{\ref{T:explicit}(ii)}]
For $q=2$, $4$, and $8$, we need to show that there are hyperelliptic curves
of every genus $g>1$ with at least $4$, $9$, and $16$ points, respectively. 
For $q=2$ and $q = 4$ this follows from Lemma~\ref{L:polynomials} below.
For $q=8$, Lemma~\ref{L:polynomials} gives us the desired hyperelliptic
curve when $g\ge 4$, so we are left to find examples of genus~$2$ and genus~$3$.
These are provided by the genus-$2$ curve $y^2 + y = x^5 + x^3$ and the 
genus-$3$ curve $y^2 + y = rx^7$, where $r$ satisfies $r^3 + r + 1 = 0$; both
of these curves have $17$ rational points.

Now assume that $q>8$.
We prove the result by induction on $g$. The statement is true for $g=2$ and $g=3$
by Lemma~\ref{L:genus2and3char2}. Now suppose Theorem~\ref{T:explicit}(ii)
holds for all $g$ less than some integer $G>3$. We will show it also holds 
when $g = G$. 

Set $h = \lfloor G/2\rfloor$. Then $h>1$, and we may apply
Theorem~\ref{T:explicit}(ii) to show that for every~$q$, there is a 
hyperelliptic curve $C/\Fq$ of genus~$h$ with at least two rational Weierstrass
points and with $\#C(\Fq) > 1 + q + 4\sqrt{q} - 12$.

If $G$ is odd, we apply Lemma~\ref{L:2g1char2} to the curve $C$ and
find that there is a hyperelliptic curve $D$ of genus $2h+1 = G$
with at least two rational
Weierstrass points and with $\#D(\Fq)\ge \#C(\Fq)$.

If $G$ is even, we apply Lemma~\ref{L:2gchar2} to the curve $C$ and find that
there is a hyperelliptic curve $D$ of genus $2h = G$ with at least two rational
Weierstrass points and with $\#D(\Fq)\ge \#C(\Fq)$ or with $\#D(\Fq) = 2q-1$.
\end{proof}

\begin{lemma}
\label{L:polynomials}
Let $q>1$ be a power of $2$ and let $g$ be an integer with $g\ge q/2$. Then
there is a hyperelliptic curve of genus $g$ over $\Fq$ having $2q+1$ rational points.
\end{lemma}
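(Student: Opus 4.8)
The plan is to write down an explicit Artin--Schreier model. Since $g \ge q/2$, the integer $d \colonequals 2g+1-q$ is positive and odd. I would choose any monic polynomial $r \in \Fq[x]$ of degree~$d$ (for instance $r = x^d$) and set $f \colonequals (x^q + x)\,r \in \Fq[x]$, a monic polynomial of odd degree $2g+1$; then let $C$ be the smooth projective model of the affine curve $y^2 + y = f$.

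First I would check that $C$ is a hyperelliptic curve of genus~$g$. The affine equation has nonvanishing partial derivative with respect to~$y$, so its zero locus is smooth; and since $\deg f$ is odd, $f$ cannot equal $h^2 + h$ for any $h \in \Fbar_q(x)$ (a polynomial $h$ would force $h^2+h$ to have even degree, while a non-polynomial $h$ would force $h^2+h$ to have a pole of even order), so $C$ is geometrically irreducible. The degree-$2$ function $x$ then realises $C$ as a hyperelliptic curve; the only pole of $f$ is at $\infty$, and it has the odd order $2g+1$, so the genus formula recalled just before Lemma~\ref{L:genus} gives genus $-1 + (2g+2)/2 = g$.

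Next I would count points. For every $a \in \Fq$ one has $a^q = a$, so $f(a) = (a^q + a)\,r(a) = 0$, and hence the fibre of $x$ over $a$ consists of the two $\Fq$-rational points $(a,0)$ and $(a,1)$; this accounts for $2q$ rational points. Over $x = \infty$ the pole of $f$ has odd order, so $C$ has exactly one point lying above it, and that point is rational (a Weierstrass point). Therefore $\#C(\Fq) = 2q+1$, as required.

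The argument is short, and there is no real obstacle here: the only place the hypothesis $g \ge q/2$ enters is in ensuring that $d = 2g+1-q$ is a positive \emph{odd} integer, which is exactly what guarantees both that $f$ is a genuine polynomial and that its pole at infinity already has odd order, so that no preliminary adjustment of the model (replacing $y$ by $y+u$) is needed. Everything else is the standard bookkeeping for Artin--Schreier covers in characteristic~$2$ used throughout this subsection.
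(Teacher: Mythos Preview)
Your proof is correct and follows essentially the same approach as the paper: both construct a polynomial of odd degree $2g+1$ that vanishes on all of $\Fq$, so that the Artin--Schreier curve $y^2+y=f$ has genus $g$ and $2q+1$ rational points. The paper reaches this polynomial via Lagrange interpolation (subtracting from $x^{2g+1}$ its degree-$\le q-1$ interpolant on $\Fq$), whereas you write it directly as $(x^q+x)r$; since any polynomial vanishing on $\Fq$ is divisible by $x^q+x$, these are the same construction, and your phrasing is if anything a little more direct.
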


\begin{proof}
Consider the function $\Fq\to\Fq$ that sends $a$ to $a^{2g+1}$. By Lagrange
interpolation, there is a polynomial $f\in\Fq[x]$ of degree at most $q-1$ that agrees
with this function. Therefore, the polynomial $x^{2g+1} + f$ has degree $2g+1$
and evaluates to $0$ for every $x\in\Fq$. Therefore the curve
$y^2 + y = x^{2g+1} + f$ has $2q+1$ rational points and has genus~$g$.
\end{proof}


\subsection{Remarks on constructing examples}
\label{SS:construction}

We mentioned earlier that our arguments in this section suggest an algorithm for
constructing the curves whose existence is asserted by Theorem~\ref{T:explicit}.
In particular, Lemmas~\ref{L:genus2}, \ref{L:genus3}, and~\ref{L:genus2and3char2} provide
explicit ways of finding the base curves of genus $2$ and $3$ with which to
start the construction; these lemmas require that we find an elliptic curve
over $\Fq$ with a certain specific trace and with all of its $2$-torsion points
rational. In general it is a difficult problem to find an elliptic curve over a 
given finite field with a given number of points, and the best general algorithm 
for doing so is essentially to pick elliptic curves at random until one finds
one with the desired order. In our case, the discriminant of the endomorphism
ring of the curves we want is smaller than average --- it's $O(\sqrt{q})$ instead of 
$O(q)$ --- so we might choose to use the Hilbert class polynomial 
method~\cite{Sutherland2011} instead.

Once we have a starting curve, we need to recursively construct curves in the
tower leading to the desired genus. Lemmas~\ref{L:2g1} and~\ref{L:2g1char2} 
tell us how to quickly produce a curve of genus $2g+1$ from a curve of 
genus~$g$, with the new curve having at least as many points as the old;
the work required is simply that of counting points on a single
hyperelliptic curve of genus $2g+1$. It is more difficult to produce a curve
of genus $2g$ from a curve of genus~$g$, with the new curve having at least as 
many points as the old. Lemmas~\ref{L:2g} and \ref{L:2gchar2} tell us that there
is at least one curve in an explicit one-parameter family with the desired 
properties, but in the worst case we might have to count points on essentially
all curves in the family before finding one that works. Heuristically, though, 
we expect to find a good curve after only a few tries.

In the worst case, then, we might need to count points on as many as 
$O(q\log g)$ hyperelliptic curves over $\Fq$ in order to find a curve of
genus~$g$ over $\Fq$ with close to $q + 1 + 4\sqrt{q}$ points. In practice,
we find that many fewer steps are required, and we expect the algorithm sketched
above to require counting points on $O(\log g)$ hyperelliptic curves, once we
have found the base curve of genus~$2$ or~$3$.

We might compare this heuristic complexity to that of the na\"{\i}ve method of
picking hyperelliptic curves of genus~$g$ over $\Fq$ at random until we find
one with close to $q + 1 \pm 4\sqrt{q}$ points. Essentially, the na\"{\i}ve 
method requires waiting for a four-sigma event to occur, so we might expect
to try about $15{,}000$ curves on average before finding one with more than
$q + 1 + 4\sqrt{q}$ points.

\bibliographystyle{alphaurl}

\bibliography{synthbib}

\end{document}